\documentclass[11pt]{amsart}
\usepackage{macro}

\title{On free resolutions of Iwasawa modules}
\begin{document}
\author{Alexandra Nichifor}
\address{(Nichifor) Department of Mathematics, University of Washington, Seattle, USA 98195-4350}
\email{nichifor@u.washington.edu}

\author{Bharathwaj Palvannan}
\address{
(Palvannan) Department of Mathematics, University of Pennsylvania, Philadelphia, USA  19104-6395}
\email{pbharath@math.upenn.edu}

\subjclass[2010]{Primary 11R23; Secondary 11R34,11S25}
\keywords{(Non-commutative) Iwasawa theory, Selmer groups, Galois cohomology}

\begin{abstract}
Let $\Lambda$ (isomorphic to $\Z_p[[T]]$)  denote the usual Iwasawa algebra and $G$ denote the Galois group of a finite Galois extension $L/K$ of totally real fields.  When the non-primitive Iwasawa module over the cyclotomic $\Z_p$-extension has a free resolution of length one over the group ring $\Lambda[G]$, we prove that the validity of the non-commutative Iwasawa main conjecture allows us to find a representative for the non-primitive $p$-adic $L$-function (which is an element of a $K_1$-group) in a maximal $\Lambda$-order. This integrality result involves a study of the Dieudonn\'e determinant. Using a cohomolgoical criterion of Greenberg, we also deduce the precise conditions under which the non-primitive Iwasawa module has a free resolution of length one. As one application of the last result, we consider an elliptic curve over $\Q$ with a cyclic isogeny of degree $p^2$. We relate the characteristic ideal in the ring $\Lambda$ of the Pontryagin dual of its non-primitive Selmer group to two characteristic ideals, viewed as elements of group rings over $\Lambda$, associated to two non-primitive classical Iwasawa~modules.
\end{abstract}

\maketitle{}

%\tableofcontents

%\renewcommand{\baselinestretch}{1.1}

\section{Introduction}

Over the years, the Iwasawa main conjecture has been formulated in various setups and various guises. The underlying principle in each formulation has been to relate objects on the algebraic side to the objects on the analytic side. On the algebraic side of Iwasawa theory, one studies modules over Iwasawa algebras. An Iwasawa algebra is a completed group ring $\Z_p[[\G]]$, for some $p$-adic Lie group $\G$.  On the analytic side, one studies $p$-adic $L$-functions. The $p$-adic $L$-functions are believed to satisfy certain integrality properties. For example, consider the case when the group $\G$ is isomorphic to $\Z_p \times \Delta$, for some finite abelian group $\Delta$. Under suitable conditions, the $p$-adic $L$-function is known to be a measure (not just a pseudo-measure). Our results in this paper are motivated by similar integrality properties of $p$-adic $L$-functions, in the non-commutative setting, as predicted by the non-commutative Iwasawa main conjectures.

Throughout this paper, fix $p$ to be an odd prime. Let us first introduce all the notations that will be required to describe our results precisely. Let $L/K$ be a finite Galois extension of totally real fields. Let $\chi : \Gal{\overline{\Q}}{K} \rightarrow  \mathbb{F}_p^\times \hookrightarrow \Z_p^\times$ be a finite character that is either totally even or totally odd. For the sake of simplicity, we have chosen to work with a finite character $\chi$ taking values in $\Z_p^\times$. One could also consider a finite character taking values in an unramified extension of $\Q_p$. Our results would hold analogously.

We let $K_\chi$ denote the  number field $\overline{\Q}^{\ker(\chi)}$. We let $L_\chi$ denote the compositum of $L$ and $K_\chi$. We let the fields $K_\infty$, $K_{\chi,\infty}$, $L_\infty$ and $L_{\chi,\infty}$ denote the cyclotomic $\Z_p$-extensions of $K$, $K_\chi$, $L$ and $L_\chi$ respectievly. Let $G:=\Gal{L}{K}$, $\Delta := \Gal{K_\chi}{K}$ and $\Gamma :=\Gal{K_\infty}{K}$. Throughout this paper, we will impose the following condition:
\begin{align}
& \label{field-condn} K_\infty \cap L_\chi = K.
\end{align}
Condition (\ref{field-condn}) imposed above allow us to view $\chi$ naturally  as a character of the groups  $\Gal{K_\chi}{K}$, $\Gal{L_\chi}{L}$, $\Gal{K_{\chi,\infty}}{K_\infty}$ and $\Gal{L_{\chi,\infty}}{L_{\infty}}$ (and throughout this paper, we shall take this point of view). We have the following field diagrams and natural isomorphisms in mind:

{\footnotesize \begin{center}
\begin{tikzpicture}[node distance = 1cm, auto]
      \node (K) {$K$};
      \node (L) [above of=K, left of=K] {$L$};
      \node (Kchi) [above of=K, right of=K] {$K_\chi$};
      \node (Lchi) [above of=K, node distance = 2cm] {$L_\chi$};
      \draw[-] (K) to node {$G$} (L);
      \draw[-] (K) to node [swap] {$\Delta$} (Kchi);
      \draw[-] (L) to node {$\Delta$} (Lchi);
      \draw[-] (Kchi) to node [swap] {$G$} (Lchi);

\node (Kinf)  [right of = K, node distance = 3cm] {$K_\infty$};
      \node (Linf) [above of=Kinf, left of=Kinf] {$L_\infty$};
      \node (Kchiinf) [above of=Kinf, right of=Kinf] {$K_{\chi,\infty}$};
      \node (Lchiinf) [above of=Kinf, node distance = 2cm] {$L_{\chi,\infty}$};

      \draw[-] (Kinf) to node {$G$} (Linf);
      \draw[-] (Kinf) to node [swap] {$\Delta$} (Kchiinf);
      \draw[-] (Linf) to node {$\Delta$} (Lchiinf);
      \draw[-] (Kchiinf) to node [swap] {$G$} (Lchiinf);
     \end{tikzpicture}
  \end{center}}
We will consider the non-primitive classical Iwasawa module $\X$ (defined in the next paragraph). Let $\Lambda$ denote the completed group ring $\Z_p[[\Gamma]]$. Let $\Lambda[G]$ denote the group ring over $\Lambda$.  In what follows, we will assume that all the $\Lambda[G]$-modules are left $\Lambda[G]$-modules with a left $G$-action. It turns out that $\X$ is a finitely generated torsion module over $\Lambda[G]$. See \cite[Proposition 1]{greenberg2014p}.   We refer the reader to this work of Greenberg to see how the Iwasawa module $\X$ relates to Galois groups appearing in classical Iwasawa theory.

Let $\mathfrak{D}(\chi)$ equal $\frac{\Q_p(\chi)}{\Z_p(\chi)}$, that is,  $\frac{\Q_p}{\Z_p}$ with an action of $\Gal{\overline{\Q}}{K}$ via the character $\chi$.  Let us define the (non-primitive) Selmer group $\Sel_{\mathfrak{D}(\chi)}(L_\infty)$.  The definition of the Selmer group depends on the parity of $\chi$. Let $\Sigma$ denote a finite set of primes in $K$ containing the primes above $p$, $\infty$, a finite prime number $\nu_0$ not lying above $p$, and all the primes ramified in the extensions $L/K$ and $K_\chi/K$. For any algebraic extension $F$ of $K$, we let $\Sigma_p(F)$ denote the set of all primes above $p$ in $F$. We let $\Sigma_0$ equal the set $\Sigma \setminus \Sigma_p(K)$. We let $K_\Sigma$ denote the maximal extension of $K$ that is unramified outside~$\Sigma$.  \\

If the character $\chi$ is totally even, the Selmer group is defined as follows:
 \begin{align*}
\Sel_{\mathfrak{D}(\chi)}(L_\infty) =  H^1\left(\Gal{K_\Sigma}{L_\infty}, \mathfrak{D}(\chi)\right). \end{align*}
If the character $\chi$ is totally odd, the Selmer group is defined as follows:
 \begin{align*}
\Sel_{\mathfrak{D}(\chi)}(L_\infty) = \ker \bigg( H^1\left(\Gal{K_\Sigma}{L_\infty}, \mathfrak{D}(\chi)\right) \xrightarrow {\phi^{\Sigma_0}_{\mathfrak{D}(\chi),\text{odd}}} \prod_{\omega \in \Sigma_p({L_\infty}),  } H^1\left(I_\omega,\mathfrak{D}(\chi)\right)^{\Gamma_\omega} \bigg).
\end{align*}
Here, $I_\omega$ denotes the inertia subgroup inside the decomposition group $G_\omega$ corresponding to the prime $\omega$. We let $\Gamma_\omega$ denote the quotient group $\frac{G_\omega}{I_\omega}$. The map $\phi^{\Sigma_0}_{\mathfrak{D}(\chi),\text{odd}}$ denotes the (natural) restriction map. Let $\X$ denote the Pontryagin dual of $\Sel_{\mathfrak{D}(\chi)}(L_\infty)$. We will sometimes write $\X_{\mathfrak{D}(\chi)}(L_\infty)$ when we want to emphasize the field $L$ and the character $\chi$.

\begin{remark}
We work with non-primitive Selmer groups since one can use Greenberg's results to show that the global-to-local map $\phi^{\Sigma_0}_{\mathfrak{D}(\chi),\text{odd}}$ is surjective. We include the auxillary prime $\nu_0$ not lying above $p$ to ensure that we are working with Selmer groups that are genuinely non-primitive. When $\chi$ equals the Teichm\"{u}ller character $\omega$, the global-to-local map defining the primitive Selmer group is not surjective. See \cite[Proposition 5.3.3]{greenberg2010surjectivity} and the illustration that follows.
\end{remark}

\subsection{Integrality property for the non-primitive $p$-adic $L$-function} \mbox{}

The main conjecture (Conjecture \ref{conj:noncomm}) allows us to deduce certain integrality properties for the non-primitive $p$-adic $L$-function $\xi$ from the non-primitive Iwasawa module $\X$. Let $Q_\Lambda$ denote the fraction field of $\Lambda$. On the algebraic side, one considers an element in the relative $K_0$-group $K_0\left(\Lambda[G],Q_\Lambda[G]\right)$. On the analytic side, we have a non-primitive $p$-adic $L$-function $\xi$ in $K_1\left(Q_\Lambda[G]\right)$. The interpolation properties of the $p$-adic $L$-function $\xi$ are recalled in Section \ref{subsec:inter_prop}. Under the connecting  homomorphism $\partial: K_1\left(Q_\Lambda[G]\right)\rightarrow K_0\left(\Lambda[G],Q_\Lambda[G]\right)$ in $K$-theory, the non-commutative Iwasawa main conjecture relates the non-primitive $p$-adic $L$-function $\xi$ to the element on the algebraic side in the relative $K_0$-group. Works of Ritter-Weiss (\cite{MR2813337}) and Kakde (\cite{MR3091976}) independently show that the non-commutative Iwasawa main conjecture holds when $\chi$ is totally even, assuming Iwasawa's $\mu=0$ conjecture holds. Progress towards the Iwasawa main conjecture,  without assuming the validity of Iwasawa's $\mu=0$ conjecture, has been made in recent work of Johnston-Nickel \cite{johnstonhybrid}.

The Artin-Wedderburn theorem gives us the following isomorphism:
\begin{align} \label{artin-wedderburn}
Q_\Lambda[G] \cong \prod_{i} M_{m_i}(D_{\Lambda,i}),
\end{align}
where  the $D_{\Lambda,i}$'s are  division algebras, finite-dimensional over $Q_\Lambda$. One obtains the following sequence of isomorphisms:
\begin{align} \label{K1-abelian}
K_1\left(Q_\Lambda[G]\right) \cong \prod_{i} K_1\left(M_{m_i}(D_{\Lambda,i})\right)  \overset{\det}{\cong} \prod_{i} \left(M_{m_i}\left(D_{\Lambda,i}\right)^*\right)^{ab} \cong \left(Q_\Lambda[G]^*\right)^{\ab}.
\end{align}

Here, $\left(Q_\Lambda[G]^*\right)^{\ab}$ denotes the maximal abelian quotient of the multiplicative group of units in the ring $ Q_\Lambda[G]$. The $\det$ in equation (\ref{K1-abelian}) refers to the Dieudonn\'e determinant. Its definition is recalled in Section \ref{determinant}. One has a natural surjection $ Q_\Lambda[G]^* \twoheadrightarrow K_1\left( Q_\Lambda[G]^*\right)$ of groups. One can ask the following question:

\begin{question} \label{INT} When $\chi$ is non-trivial, does $\xi$ belong to the image of the following natural map of multiplicative monoids?
$$\Lambda[G] \cap Q_\Lambda[G]^* \rightarrow K_1\left(Q_\Lambda[G]\right).$$
\end{question}

A similar question, pertaining to the integrality properties of $p$-adic $L$-functions, was raised in the five author paper \cite{MR2217048}. See Conjecture 4.8 in \cite{MR2217048}. In that paper, the authors considered $p$-adic $L$-functions associated to ordinary elliptic curves. Note that the formulation of Question \ref{INT} is stronger than Conjecture 4.8 in \cite{MR2217048} as the authors of \cite{MR2217048} state their conjecture assuming that the group $G$ has no element of order $p$. One can also consider this question as a (non-commutative, non-primitive) refinement of the $p$-adic Artin conjecture of Greenberg \cite{MR692344}.

\begin{remark}
In the setup of our theorems, the $p$-adic $L$-function (if it exists, as is conjectured) turns out to be unique. To see this, it suffices to show that the reduced Whitehead group $\mathrm{SK}_1\left(Q_\Lambda[G]\right)$ equals zero. This, in turn, reduces to showing that $\mathrm{SK}_1\left(D_{\Lambda,i}\right)$ equals zero for each of the division algebras $D_{\Lambda,i}$ appearing in equation (\ref{artin-wedderburn}). In Section \ref{sec:maxorder}, we show that each of these division algebras $D_{\Lambda,i}$ are of the form $D \otimes_{\Q_p} Q_\Lambda$, where $D$ is a finite dimensional division algebra over $\Q_p$. A result of Nakayama-Matsushima \cite{MR0014081} shows that for finite dimensional division algebras $D$ over $\Q_p$, we have $\mathrm{SK}_1(D)=0$. This result of Nakayama-Matsushima combined with the fact that $Q_\Lambda$ is a purely transcendental extension of $\Q_p$ along with Platanov's Stability Theorem (\cite[Page 315]{MR562621}) is then sufficient to show that $\mathrm{SK}_1\left(D_{\Lambda,i}\right)$ equals zero for each the division algebras $D_{\Lambda,i}$ appearing in equation~(\ref{artin-wedderburn}).
\end{remark}

Let $M_{\Lambda[G]}$ denote the maximal $\Lambda$-order inside $Q_\Lambda[G]$ containing $\Lambda[G]$ as defined in equation (\ref{maximal-order-definition}). Inside $Q_\Lambda[G]$, we have the inclusions $$\Lambda[G] \subset M_{\Lambda[G]} \subset \frac{1}{|G|}\Lambda[G].$$
We prove the following partial result towards Question \ref{INT}.

\begin{restatable}{Theorem}{intdeterminant}\label{integrality-determinant}
Suppose $\chi$ is non-trivial and Conjecture \ref{conj:noncomm} holds. Suppose also that the $\Lambda[G]$-module $\X$ has a free resolution of length one. Then, the non-primitive $p$-adic $L$-function $\xi$ belongs to the image of the following natural map of multiplicative monoids:
\begin{align*}
M_{\Lambda[G]} \cap Q_\Lambda[G]^* \rightarrow K_1\left(Q_\Lambda[G]\right).
\end{align*}
\end{restatable}

Suppose $\chi$ is non-trivial and Conjecture \ref{conj:noncomm} holds. When $p$ does not divide the order of $G$, the $\Lambda[G]$-module $\X$ has a free resolution of length one (see Remark \ref{nmidG}) and the maximal $\Lambda$-order $M_{\Lambda[G]}$ coincides with $\Lambda[G]$.  One has an affirmative answer to Question \ref{INT}. When $p$ divides the order of $G$, the maximal $\Lambda$-order $M_{\Lambda[G]}$ containing $\Lambda[G]$ does not coincide with $\Lambda[G]$. When $G$ is abelian, Question \ref{INT} has an affirmative answer. In the commutative case, the question amounts to asking whether the non-primitive $p$-adic $L$-function is a measure (and not just a pseudo-measure). It is already known that the non-primitive (abelian) $p$-adic $L$-function is a measure, due to works of Barsky \cite{MR525346}, Cassou-Nogu\`es \cite{MR522119} and Deligne-Ribet \cite{MR579702}. These results in the commutative case served as an additional source of motivation for us to pursue this question in the non-commutative setting.

\begin{comment} One crucial input to Theorem \ref{integrality-determinant} is the fact that the maximal order $M_{\Lambda[G]}$ has finite global dimension. This allows us to formulate Conjecture \ref{conj:noncomm} using the $M_{\Lambda[G]}$-module $M_{\Lambda[G]} \otimes_{\Lambda[G]} \X$. When the global dimension of a maximal order over a regular local dimension of dimension two is finite, Ramras (\cite[Theorem 1.10]{MR0245572}) has established an analog of the Auslander-Buchsbaum formula. Results of Greenberg \cite{greenberg2014pseudonull} establish that $\X$ has no non-trivial pseudo-null $\Lambda$-submodules. Combining these results would let us deduce that the $M_{\Lambda[G]}$-module $M_{\Lambda[G]} \otimes_{\Lambda[G]} \X$ has a free resolution of length one.
\end{comment}

\begin{remark}
A variant of Conjecture \ref{conj:noncomm} involving the maximal order $M_{Z(Q_\Lambda[G])}$ of the center $Z(Q_\Lambda[G])$ of $Q_{\Lambda[G]}$ is known. See work of Johnston-Nickel \cite[Theorem 4.9]{johnstonhybrid} or Ritter-Weiss \cite[Theorem 16 and Remark (H)]{MR2114937} for the exact statement. In light of those results, it may be helpful to remark that the conclusion of Theorem \ref{integrality-determinant} would follow without requiring the validity of Conjecture \ref{conj:noncomm} if the reduced norm $\mathrm{Nrd}: M_{\Lambda[G]}^\times \rightarrow M_{Z(Q_\Lambda[G])}^\times$ is surjective. Since $p$ is odd, the extension $Q_{\Lambda}[G] \otimes_{\Q_p}\Q_p(\mu_{p})$ is a product of matrix rings over commutative fields. See \cite[Theorem 1.10(ii)]{MR933091}. One way to bypass requiring the validity of Conjecture \ref{conj:noncomm} in Theorem \ref{integrality-determinant} then is to simply work with the pair $\left(\Lambda[G] \otimes_{\Z_p}\Z_p[\mu_{p}], \ Q_{\Lambda}[G] \otimes_{\Q_p}\Q_p(\mu_{p})\right)$ instead of the pair $(\Lambda[G],Q_{\Lambda}[G])$.
\end{remark}

\subsection{Free resolutions of length one over $\Lambda[G]$} \label{subsec:free_res} \mbox{}

One can ask when the $\Lambda[G]$-module $\X$ has a free resolution of length one. Over the integral group ring $\Lambda[G]$, the situation is much easier to handle when the order of $G$ is co-prime to $p$. See Remark \ref{nmidG}. When $p$ does not divide $|G|$, the global dimension of the ring $\Lambda[G]$ equals two; in this case the $\Lambda[G]$-module $\X$ has a free resolution of length one. The situation is more complicated when there exists an element of order $p$ in $G$ since the ring $\Lambda[G]$ would then have infinite global dimension; in this case it is possible for $\Lambda[G]$-modules to have no non-trivial pseudo-null submodules and yet have infinite projective dimension over $\Lambda[G]$. The purpose of Theorems \ref{evenprojectivity} and \ref{oddprojectivity} is to precisely circumvent these difficulties.

\begin{restatable}{Theorem}{evprojectivity}\label{evenprojectivity}
Suppose $\chi$ is totally even. Suppose $p$ divides $|G|$. The $\Lambda[G]$-module $\X$ has a free resolution of length one if and only if $\chi$ is non-trivial.
\end{restatable}

\begin{restatable}{Theorem}{odprojectivity} \label{oddprojectivity}
Suppose $\chi$ is totally odd. Suppose $p$ divides $|G|$. The $\Lambda[G]$-module $\X$ has a free resolution of length one if and only if one of the two following conditions holds for every prime $\omega \in \Sigma_p(L_\infty)$:
\begin{enumerate}[label=(\Roman*), ref=\Roman*]
\item\label{non-anomalous} $H^0\left(G_\omega,\mathfrak{D}(\chi)\right)=0$
\item \label{tamelyramified} $\omega$ is tamely ramified in the extension $L_\infty / K_\infty$.
\end{enumerate}
\end{restatable}

In the first author's thesis \cite{nichifor2004iwasawa} in 2004,  Theorems \ref{evenprojectivity} and  \ref{oddprojectivity} were proved in the case when $G$ is a cyclic $p$-group using a formula of Kida (\cite{MR599821}) and assuming the validity of Iwasawa's $\mu=0$ conjecture. The results in this paper are a natural generalization of the results of \cite{nichifor2004iwasawa}, though the methods in this paper are significantly different. We use a cohomological criterion developed by Greenberg in \cite{greenberg2011iwasawa}. This allows us to prove our results, without having to assume the validity of Iwasawa's $\mu=0$ conjecture. See Proposition 3.1.1 in \cite{greenberg2011iwasawa} for similar results concerning the Pontryagin duals of Selmer groups associated to elliptic curves. Much of our motivation towards this paper stems from this work of Greenberg.

Nickel has shown that when the character $\chi$ is odd and when all the primes $\omega$ in $\Sigma_p(L_\infty)$ are \textit{almost tame}, then the $\Lambda[G]$-module $\X$ has a free resolution of length one. The condition that a prime $\omega$ in $\Sigma_p(L_\infty)$ is \textit{almost tame} is related to Condition \ref{tamelyramified} in Theorem \ref{oddprojectivity} and the image of complex conjugation in the decomposition group corresponding to $\omega$. See Proposition 4.1 in \cite{MR2822866} and Proposition 7 in \cite{MR2805422}. Theorem \ref{evenprojectivity} can also be deduced from the machinery of Selmer complexes appearing in the work of Fukaya and Kato \cite{MR2276851}, as we indicate in Section \ref{section-non-commutative}. For Theorem \ref{evenprojectivity}, one can also use results from the work of Ritter and Weiss \cite{MR1935024,MR2114937}. See also Chapter 5 of Witte's habilitation thesis \cite{witte}.

\begin{remark}
In light of results of Nickel \cite{MR2805422,MR2822866} and Ritter-Weiss \cite{MR1935024,MR2114937}, Theorems \ref{evenprojectivity} and \ref{oddprojectivity} are not essentially new. However, one of our main objectives in this paper is to initiate an approach towards studying integrality properties of $p$-adic $L$-functions (as in Question \ref{INT}) in general situations using Greenberg's cohomological criterion (\cite[Proposition 2.4.1]{greenberg2011iwasawa})  and the Dieudonn\'e determinant. Greenberg's cohomological criterion is valid over general $1$-dimensional $p$-adic Lie groups. Our approach via the theory of Dieudonn\'e determinant may be of independent interest from the perspective of non-commutative algebra. In the non-commutative setting, the study of the Dieudonn\'e determinant seems a rather subtle question to us. See Examples \ref{eg1}, \ref{eg2}, \ref{dihedral-example} and \ref{quaternion-example} in Section \ref{determinant}. These examples suggest that establishing that the non-primitive Iwasawa module has a free resolution of length $1$ alone may not be sufficient to establish an affirmative answer towards Question~\ref{INT} (whenever it does have an affirmative answer).
\end{remark}

\subsection{Elliptic curves with a cyclic $p^2$ isogeny}

We will now consider applications of Theorems \ref{evenprojectivity} and \ref{oddprojectivity} to a setting involving an elliptic curve defined over $\Q$ with a cyclic $p^2$ isogeny. Our main theorem (Theorem \ref{elliptic-curves}) in this setting  is a generalization of a result that appears implicitly in the work of Greenberg and Vatsal \cite{greenberg2000iwasawa}. For a generalization of this work of Greenberg and Vatsal in another direction, see work of Hirano \cite{MR3525165}.

Let $E$ be an elliptic curve defined over $\Q$ with good ordinary or split multiplicative reduction at $p$. Let $\Phi : E \rightarrow E'$ be a cyclic isogeny over $\Q$ of degree $p^2$. Let $\tilde{\Phi} : E' \rightarrow E$ be the dual isogeny. We shall suppose that the Galois action on the kernel of the isogeny $\Phi$ is even.  We will first state the theorem in this setting before explaining the notations.

\begin{restatable}{Theorem}{ellipticcurves} \label{elliptic-curves}
Suppose the even character $\chi_\phi$ is ramified at $p$. Suppose the condition \ref{Non-DG} holds. We have the following equality of ideals in $\frac{\Lambda}{(p^2)}$:
\begin{align} \label{equality-elliptic-curves}
\sigma_{p^2}\bigg(\mathrm{Char}_{\Lambda}\left(\Sel_{E[p^\infty]}(\Q_\infty)^\vee\right)\bigg) = \sigma_{\phi}\bigg(\left(\det\left(A_\phi\right)\right)\bigg)  \sigma_{\tilde{\phi}} \bigg(\left(\det\left(A_{\tilde{\phi}}\right)\right)\bigg).
\end{align}
\end{restatable}

The non-primitive Selmer group $\Sel_{E[p^\infty]}(\Q_\infty)$ associated to $E$ is defined in the work of Greenberg and Vatsal \cite{greenberg2000iwasawa}. The characteristic ideal of the $\Lambda$-module $\Sel_{E[p^\infty]}(\Q_\infty)^\vee$ is denoted by $\mathrm{Char}_{\Lambda}\left(\Sel_{E[p^\infty]}(\Q_\infty)^\vee\right)$. We will need to consider the natural map $\sigma_{p^2}:\Lambda \rightarrow \frac{\Lambda}{(p^2)}$. Here, $G_{\phi}$ and $G_{\tilde{\phi}}$ are abelian Galois groups, of order dividing $p$, of Galois extensions $L_{\phi}/\Q$ and $L_{\tilde{\phi}}/\Q$ respectively. These fields are ``cut out'', in a certain sense, by the cyclic isogenies $\Phi$ and $\tilde{\Phi}$. We will need to impose the condition that $L_\phi \cap \Q_\infty =\Q$, similar to the condition given in (\ref{field-condn}). This condition is labeled \ref{Non-DG}. See Section \ref{ellipticcurves-section} for the precise definitions of the various objects along with the description of the  ring homomorphisms $\sigma_{\phi} : \Lambda[G_\phi] \rightarrow \frac{\Lambda}{(p^2)}$ and  $\sigma_{\tilde{\phi}} : \Lambda[G_{\tilde{\phi}}] \rightarrow \frac{\Lambda}{(p^2)}$ given in (\ref{groupring-homo}). The Galois action on $\ker(\Phi)[p]$ is given the character $\chi_\phi$. In this setup, as we shall see in Section \ref{ellipticcurves-section}, Theorems \ref{evenprojectivity} and \ref{oddprojectivity} will allow us to consider two non-primitive Iwasawa modules that have free resolutions of length one over $\Lambda[G_\phi] $ and $\Lambda[G_{\tilde{\phi}}]$ respectively. These free resolutions of length one will naturally lead us to consider two square matrices $A_\phi$ and $A_{\tilde{\phi}}$ in group rings $\Lambda[G_\phi]$ and $\Lambda[G_{\tilde{\phi}}]$  respectively. See equation (\ref{2-classical-ses}).

\section{Dieudonn\'e determinant} \label{determinant}

To answer Question \ref{INT}, that deals with the integrality properties of non-primitive $p$-adic $L$-functions, we will need to develop some preliminaries on the Dieudonn\'e determinant. We shall follow some of the terminology introduced in Lam's book on non-commutative rings \cite{MR1838439}. In our discussions, the rings will always be associative rings with a unity. The units of a ring $T$, denoted by $T^*$, will consist of elements that have both a left and a right inverse. We will say that a ring $T$ is Noetherian if it is both left and right Noetherian. We will say that a Noetherian ring $T$ is a local ring if it has a unique maximal left ideal $\m_T$ (this ideal $\m_T$ turns out to be the unique  maximal right ideal). \\

We will say that a Noetherian ring $T$ is a semi-local ring if the quotient ring $\frac{T}{\text{Jac}(T)}$ is semisimple. Here, $\text{Jac}(T)$ is the Jacobson radical of $T$ (which is a two-sided ideal in $T$). Let $T$ be a semi-local ring. We recall three properties associated to it:
\begin{enumerate}
\item The matrix ring $M_n(T)$ is a semi-local ring with Jacobson radical equal to $M_n\left(\mathrm{Jac}(T)\right)$. See 20.4 in Lam's book \cite{MR1838439}.
\item A semi-local ring is Dedekind-finite. That is, whenever an element $u$ is right-invertible, then $u$ is left-invertible (or equivalently, whenever an element $u$ is left-invertible, then $u$ is right-invertible). See Proposition 20.8 in Lams' book \cite{MR1838439}.
\item A matrix $A$ in $M_n(T)$ is invertible if and only if it becomes invertible in $M_n\left(\frac{T}{\mathrm{Jac}(T)}\right)$.  See Theorem 1.11 in Oliver's book \cite{MR933091}. In particular, an element $u$ in $T$ is invertible if and only if its image in the quotient ring $\frac{T}{\mathrm{Jac}(T)}$ is invertible.
\end{enumerate}

For each integer $n \geq 1$, one can consider the inclusions $\Gl_n(T) \hookrightarrow \Gl_{n+1}(T)$ via the map $g \rightarrow \left(\begin{array}{cc} g & 0 \\ 0 & 1 \end{array} \right)$. Let $\Gl_\infty\left(T\right)$ equal $\bigcup \limits_{n \geq 0} \Gl_n\left(T\right)$.  The group $K_1(T)$ is defined below:

\begin{align*}
K_1\left(T\right) := \frac{\Gl_\infty(T)}{[\Gl_\infty(T),\Gl_\infty(T)]}.
\end{align*}

Here,  $\left[\Gl_\infty(T), \Gl_\infty(T)\right]$ is the commutator subgroup of $\Gl_\infty(T)$. We will need to consider a subgroup $W(T)$, of the multiplicative group $T^*$, generated by elements of the form $(1+rs)(1+sr)^{-1}$, whenever $1+rs$ is a unit in the ring $T$. Here, $r,s$ are elements of the ring $T$. The group $W(T)$ contains the commutator subgroup $[T^*,T^*]$.  One has a natural ``determinant'' map, often called the Dieudonn\'e or Whitehead determinant. See Example 1.3.7 and Exercise 1.2, both in Chapter III of Weibel's $K$-book \cite{MR3076731} for more details. The Dieudonn\'e determinant is the unique group homomorphism:
\begin{align*}
\det: K_1(T) \rightarrow \frac{T^*}{W(T)},
\end{align*}
characterized by the following properties:

\begin{enumerate}
\item If $A$ is an elementary $n \times n$ matrix, then $\det(A)=1$.  We say that an $n \times n$ matrix $A = (a_{ij})$ is elementary if there exists distinct indices $r,s$ ($r \neq s$) and an element $\lambda$ in $T$ such that $a_{ij} = \left\{ \begin{array}{cc} 1, \quad & \text{if } i = j,  \\ \lambda, \quad & \text{ if } i=r, \ j=s, \\ 0 \quad & \text{otherwise}. \end{array}  \right.$
\item If $\mathrm{diag}(t) =$ $\left(\begin{array}{cccc} t & 0 & \cdots & 0 \\ 0 & 1 & \cdots & 0 \\ \vdots & \vdots & \ddots & \vdots \\ 0 & 0  & \cdots & 1 \end{array}\right)$ and $g$ belong to $\Gl_n(T)$, then $\det\big(\mathrm{diag}(t)g\big) = t \det(g)$.
\end{enumerate}

In fact, Vaserstein \cite{MR2111217} has shown that the Dieudonn\'e determinant is an isomorphism. One can use these properties of the Dieudonn\'e determinant to deduce the following additional properties:

\begin{enumerate}
\setcounter{enumi}{2}
\item If $A$ is a permutation matrix, then $\det(A)$ is a unit in $T$ (since $\det(A)^2=1$). We say that the matrix $A = (a_{ij})$ is a permutation matrix if there exists distinct indices $r,s$ ($r \neq s$) such that $a_{ij} = \left\{ \begin{array}{cc} 1, \quad & \text{if } i = j \neq r, \ i = j \neq s \\ 1 \quad & \text{ if } i=r, \ j=s, \\  1 \quad & \text{ if } i=s, \ j=r, \\ 0 \quad & \text{otherwise}. \end{array}  \right.$
\item If $A$ is a triangular matrix in $\Gl_n(T)$, then $\det(A)=\prod_{i} a_{ii}$ in $K_1(T)$, where $a_{ii}$'s are the entries on the main diagonal of $A$.
\end{enumerate}

We will consider the following property for the semi-local ring $T$:
\begin{enumerate}[label=$(\mathrm{\mathbf{WP}})$]
\item\label{Wproperty} $\frac{T}{\mathrm{Jac}(T)}$ is a product of matrix rings, none of which is $M_2(\mathbb{F}_2)$ and at most one of these factors is $\mathbb{F}_2$.
\end{enumerate}
Note that if $2$ is invertible in $T$, then the property \ref{Wproperty} holds. Vaserstein \cite{MR2111217} has shown that for a semi-local ring, if property \ref{Wproperty} holds, then we have the following natural isomorphism of abelian groups
\begin{align*} \frac{T^*}{W(T)} \cong (T^*)^{ab}. \end{align*}

Here, $(T^*)^{ab}$ denotes the abelianization of the unit group $T^*$. To provide an illustration, consider a $2 \times 2$ matrix $\left[\begin{array}{cc} a & b \\ c & d\end{array} \right] $ in $\Gl_2(T)$. Also, assume that the element $a$ is invertible in $T$. Then, using properties of determinants described above, one can show that
\begin{align} \label{computation-2-by-2}
\det\left(\left[\begin{array}{cc} a & b \\ c & d\end{array} \right]   \right) & = \det\left(\left[\begin{array}{cc} 1 & 0 \\ ca^{-1} & 1\end{array} \right]\left[\begin{array}{cc} a & 0 \\ 0 & d-ca^{-1}b\end{array} \right]\left[\begin{array}{cc} 1 & a^{-1}b \\ 0 & 1\end{array} \right] \right) \\ & \notag = [ad-aca^{-1}b] \in  \left(T^*\right)^{\ab}.
\end{align}

\begin{example} \label{eg1} Let $\mathbb{H}$ be the quaternion division algebra over the real numbers $\mathbb{R}$. Note that $\mathbb{H}$ is a $4$-dimensional vector space over $\mathbb{R}$ generated by $1$, $i$, $j$ and $k$ satisfying the usual properties:
\begin{align*}
i^2 = j^2 = k^2 =-1, \qquad ij = -ji = k, \qquad jk = -kj = i, \qquad ki = -ik = j.
\end{align*}
Consider the following example of an invertible $2 \times 2$ matrix in $\Gl_2(\mathbb{H})$:
\begin{align*}
& \left[\begin{array}{cc} a & b \\ c & d\end{array} \right]  =\left[\begin{array}{cc} i & j \\ j & i\end{array} \right], \qquad  && \left[\begin{array}{cc} i & j \\ j & i\end{array} \right] \left[\begin{array}{cc} i & j \\ j & i\end{array} \right] =\left[\begin{array}{cc} -2 & 0 \\ 0 & -2\end{array} \right],\\  & ad-aca^{-1}b =-2,  && ad - bc = da -bc = ad -cb = da-cb =0.
\end{align*}
\end{example}
The example above shows that $\det(A)$, in general, is not uniformly represented by $ad-bc$ or $da-bc$ or $ad-cb$ or $da - cb$. Nevertheless, one may still ask the following question:

\begin{question} \label{naive-question}
Let $T$ be a semi-local ring satisfying \ref{Wproperty}.  Let $R \hookrightarrow T$ be a subring. Suppose the $n \times n$ matrix $A$ belongs to $M_n(R) \cap \Gl_n(T)$. Does $\det(A)$ lie in the image of the natural  map
\begin{align*}
i: R \cap T^* \rightarrow (T^*)^{ab},
\end{align*}
of multiplicative monoids?
\end{question}

When $T$ is commutative, Question \ref{naive-question} has an affirmative answer. However, considering the level of generality at which it is phrased, Question \ref{naive-question} has a negative answer. Consider the following example described in Problem 3 in Section 7.10 of Cohn's book \cite{MR2246388}.

\begin{example} \label{eg2}
Let $k$ be a field such that $\text{Char}(k)\neq 2$. Let $R = k<x,y,z,t>$ be the free (non-commutative) algebra in $4$ indeterminates. Let $U_R$ denote its universal skewfield of fractions. See Section 7.2 in \cite{MR2246388} for the definition of universal skewfield of fractions and the properties that this skewfield $U_R$ satisfies. See Corollary 2.5.5 and Corollary 7.5.14 in \cite{MR2246388}, as to why this ring $R$ has a universal skewfield of fractions. For our purposes, we will simply keep in mind that $U_R$ is not obtained via the Ore localization of $R$ at the multiplicatively closed set $R \setminus\{0\}$.  Let $A$ be the $2\times 2$ matrix $\left(\begin{array}{cc}x & y \\ z & t \end{array}\right)$ in $M_2(R) \cap \Gl_2(U_R)$. In this case,  $\det(A) = \big[x\left(t - zx^{-1}y\right)\big]$ inside $\left(U_R^{*}\right)^{\text{ab}}$. However, this element of $\left(U_R^*\right)^{\text{ab}}$ has no representative in $R$. \qed
\end{example}

For our purposes, we would like to refine Question \ref{naive-question} so that the refined question may have an affirmative answer. We will follow some of the terminology given in the book of Goodearl and Warfield  \cite{MR2080008}. Let $S$ be a multiplicatively closed set in a ring $R$. The set $S$ is called a left-reversible left-Ore set if it satisfies the following two conditions:
\begin{enumerate}
\item (left cancellation) If $ns = ms$, for some $n,m$ in $R$ and some $s$ in $S$, then there exists $s'$ in $S$ such that $s'n = s'm$.
\item (left Ore condition) For every $r \in R$ and $s \in S$, there exists $r' \in R$, $s' \in S$ so that $s'r = r's$.
\end{enumerate}

One can similarly define a right-reversible right-Ore set. A multiplicatively closed set $S$ will be called an Ore set if it is both a left-reversible left-Ore set and a right-reversible right-Ore set. If $S$ is an Ore set in a ring $R$, it will be possible to consider the localization $R_S$. The set $S$ is also often called a denominator set and the ring $R_S$ is often called the Ore localization of $R$ at $S$. See Chapter 9 in the book of Goodearl and Warfield \cite{MR2080008}, especially Theorem 9.7 and Proposition 9.8 there.  We ask the following variant of Question \ref{naive-question}.

\begin{question} \label{ore-localization-question-refined}
Let $S$ be an Ore set in a semi-local ring $R$, so that the localization $R_S$ is also a semi-local ring. Suppose $R_S$ satisfies \ref{Wproperty}. Let $A$ be a matrix that belongs to $M_n(R) \cap \Gl_n(R_S)$. When does $\det(A)$ lie in the image of the natural map
\begin{align*}
i:R \cap R_S^* \rightarrow (R_S^*)^{ab}.
\end{align*}
of multiplicative monoids?
\end{question}

To introduce one piece of terminology, we will follow the notations of Question \ref{ore-localization-question-refined}. We will say that Question \ref{ore-localization-question-refined} has a positive answer for the pair $(R, R_S)$ if the following statement is true:
\begin{align*}
\forall n \geq 0, \forall A \in M_n(R) \cap \Gl_n(R_S) \implies \det(A) \in  i\left(R \cap R_S^*\right) \subset (R_S^*)^{ab}.
\end{align*}
Otherwise, we will say that Question \ref{ore-localization-question-refined} has a negative answer for $(R, R_S)$. \\

To introduce another piece of terminology, we will say that a matrix $A$ in $M_n(T)$ admits a \textit{diagonal reduction via elementary operations}, if there exists matrices $U$ and $V$ in $\Gl_n(T)$, and a diagonal matrix $B$ in $M_n(T)$, so that
\begin{enumerate}
\item $A = UBV$,
\item The matrices $U$ and $V$ are obtained as products of matrices of the following kinds:
\begin{itemize}
\item elementary matrices,
\item permutation matrices,
\item scalar matrices in $\Gl_n(T)$.
\end{itemize}
\end{enumerate}

We recall the definition of a principal ideal domain in the non-commutative setting given in Jacobson's book \cite{MR0008601}. A (not necessarily commutative) domain is said to be a principal left ideal domain if every left ideal is principal. A domain is said to be a principal right ideal domain if every right ideal is principal. A domain is said to be a principal ideal domain if it is both a principal left ideal domain and a principal right ideal domain.
\begin{proposition}[Theorem 16, Chapter 3 in \cite{MR0008601}] \label{diagonal-reduction}
Let $T$ be a principal ideal domain. Every matrix $A$ in $M_n(T)$ admits a diagonal reduction via elementary operations.
\end{proposition}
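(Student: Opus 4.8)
The plan is to argue by induction on $n$, the base case $n=1$ being vacuous since a $1\times 1$ matrix is already diagonal. For the inductive step I would show that, whenever $A\neq 0$, the allowed operations bring $A$ to the block form $\left(\begin{smallmatrix}\delta&0\\0&A'\end{smallmatrix}\right)$ with $\delta\in T$ and $A'\in M_{n-1}(T)$; a diagonal reduction of $A'$ supplied by the inductive hypothesis then combines with this, by the usual block-matrix bookkeeping (padding the transforming matrices with a $1$ in the upper-left corner, a manipulation that stays within the permitted classes of operations), to give a diagonal reduction of $A$. So everything reduces to producing the block form. Note first that, after interchanging a row and a column (transpositions being permitted), one may assume the $(1,1)$-entry of $A$ is nonzero.

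For the block form I would use a length function. It is part of the structure theory of a principal ideal domain that every nonzero $a\in T$ has finite length $\ell(a):=\mathrm{length}_T(T/Ta)<\infty$; since $Tb/Tab\cong T/Ta$ as left $T$-modules for all nonzero $a,b$ (the map $x+Ta\mapsto xb+Tab$, which is injective because $T$ is a domain), one obtains $\ell(ab)=\ell(a)+\ell(b)$, and in particular $\ell(a)=0$ exactly when $a$ is a unit. Now, among all matrices obtainable from $A$ by the allowed operations with nonzero $(1,1)$-entry, choose one, $B=(b_{ij})$, with $\ell(b_{11})$ least. I claim that $b_{11}$ is then a right divisor of every $b_{i1}$ and a left divisor of every $b_{1j}$. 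Indeed, if $b_{i1}\notin Tb_{11}$, the $2\times 2$ reduction described below performs a single invertible operation on rows $1$ and $i$ producing a matrix whose $(1,1)$-entry $e$ generates $Tb_{11}+Tb_{i1}\supsetneq Tb_{11}$; writing $b_{11}=te$ with $t$ a non-unit gives $\ell(e)=\ell(b_{11})-\ell(t)<\ell(b_{11})$, contradicting minimality, and the case $b_{1j}\notin b_{11}T$ is symmetric, using column operations and the factorization $b_{11}=e't'$. Granting the claim, I clear the first column by the row operations $\mathrm{row}\ i\mapsto \mathrm{row}\ i-\lambda_i\cdot\mathrm{row}\ 1$ with $\lambda_i b_{11}=b_{i1}$, which leave row $1$ and the $(1,1)$-entry untouched, and then clear the first row by the column operations $\mathrm{col}\ j\mapsto \mathrm{col}\ j-\mathrm{col}\ 1\cdot\mu_j$ with $b_{11}\mu_j=b_{1j}$, which leave column $1$ and the $(1,1)$-entry untouched; what remains is the block form with $\delta=b_{11}$.

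The crux, and the step I expect to be the main obstacle, is the $2\times 2$ reduction invoked above: for $a,b\in T$ not both zero, one wants an invertible operation on the two rows, built from elementary, permutation and scalar matrices, carrying $\binom{a}{b}$ to $\binom{d}{0}$ with $Td=Ta+Tb$. All the delicacy here is non-commutative. Principality gives $Ta+Tb=Td$; writing $a=\alpha d$, $b=\beta d$ and $d=\gamma a+\delta b$, cancellation in the domain $T$ forces $\gamma\alpha+\delta\beta=1$, so the right $T$-module map $(\gamma,\delta)\colon T^2\to T$ is a split surjection, and its kernel, being a submodule of a free module over a principal ideal domain, is free of rank one, say $\binom{p_0}{q_0}T$; hence $T^2=\binom{\alpha}{\beta}T\oplus\binom{p_0}{q_0}T$, the matrix $N=\left(\begin{smallmatrix}\alpha&p_0\\\beta&q_0\end{smallmatrix}\right)$ lies in $\Gl_2(T)$, and $N^{-1}\binom{a}{b}=N^{-1}\binom{\alpha}{\beta}d=\binom{1}{0}d=\binom{d}{0}$. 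In the commutative case Bézout's identity finishes this at once; the genuinely delicate point over a non-commutative principal ideal domain is to verify that this change of basis of $T^2$ can be written as a product of the permitted elementary, permutation and scalar matrices. This is exactly where the elementary-divisor theory over principal ideal domains enters in an essential way, and for it I would appeal to the development in \cite[Chapter~3]{MR0008601}.
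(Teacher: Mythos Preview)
The paper does not supply its own proof of this proposition; it simply records the statement and cites Jacobson's \emph{Theory of Rings}, Chapter~3, Theorem~16. So there is no argument in the paper to compare against beyond the bare reference.

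Your outline is the classical route to the elementary-divisor theorem over a (non-commutative) principal ideal domain and is, in broad strokes, the approach Jacobson takes: induct on $n$, use a length function on nonzero elements (finiteness of $\ell(a)=\mathrm{length}_T(T/Ta)$ and the additivity $\ell(ab)=\ell(a)+\ell(b)$ are exactly the standard facts), minimise the length of the $(1,1)$-entry over the orbit under the allowed operations, and then clear the first row and column. Your identification of the one genuinely non-commutative subtlety is on the mark: reducing $\binom{a}{b}$ to $\binom{d}{0}$ with $Td=Ta+Tb$ is immediate via some $N\in\Gl_2(T)$, but showing that $N$ (equivalently $N^{-1}$) decomposes as a product of elementary, permutation and scalar matrices is where the real content lies. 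Deferring this to the machinery of \cite[Chapter~3]{MR0008601} is reasonable, and is in effect what the paper itself does; just be aware that citing the same chapter for the crux and for the theorem makes the write-up feel somewhat circular. If you want a self-contained treatment, you would need to unwind Jacobson's argument for the $2\times 2$ step, which for general PIDs (as opposed to Euclidean domains) is precisely the non-obvious part.

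One minor comment: for the paper's actual application (Step~4 of Proposition~\ref{image-det-max}) all that is used is that $\det(U)$ and $\det(V)$ are units in $T$, which already follows from $U,V\in\Gl_n(T)$ without the finer decomposition into the three types of matrices. So even a weaker form of the proposition, with $U,V$ merely in $\Gl_n(T)$, would suffice downstream.
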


Let $O$ be a complete discrete valuation ring, whose fraction field is denoted by $K$. Let $D$ be a division algebra whose center contains $K$ and such that the index $[D:K]$ is finite. Let $O_D$ be the maximal $O$-order inside $D$. In this case, $O_D$ is a (non-commutative) principal ideal domain. See Theorem 13.2 in Reiner's book on Maximal orders \cite{MR0393100} where it is established that $O_D$ is a principal ideal domain. Every $n \times n$ matrix with entries in $O_D$ admits a diagonal reduction via elementary operations. See also Theorem 17.7 in \cite{MR0393100} for this fact. In this case, Question \ref{ore-localization-question-refined} has a positive answer for the pair $(O_D,D)$. \\

Unfortunately, we will not be able to classify the tuples $(R,R_S)$ for which Question \ref{ore-localization-question-refined} has a positive answer. Nevertheless, we will   provide one example (Example \ref{dihedral-example}) when Question \ref{ore-localization-question-refined} has a positive answer and one example (Example \ref{quaternion-example}) when Question \ref{ore-localization-question-refined} has a negative answer.

\begin{example} \label{dihedral-example}
Let $p$ be an odd prime. In this example, we shall show that Question \ref{ore-localization-question-refined} has a positive answer for the pair $(\Z_p[D_{2p}], \Q_p[D_{2p}])$. Let $D_{2p}$ be the dihedral group of order $2p$ which has the following presentation
\begin{align*}
D_{2p} =\{x,y | x^2 = y^p =1 , \ xyx^{-1} = y^{-1}\}.
\end{align*}
Note that since $p$ is odd, the ring $\Z_p[D_{2p}]$ satisfies \ref{Wproperty}.

Let $L = \Q_p(\zeta_p)$. Let $F =\Q_p(\zeta_p + \zeta_p^{-1})$. Here, we let $\zeta_p$ denote a primitive $p$-th root of unity. The Galois group $\Gal{L}{F}$ is of order two. Let $\alpha$ denote the non-trivial element in $\Gal{L}{F}$. Let $O_{L}$ and $O_{F}$ denote the ring of integers in $L$ and $F$ respectively. Let $\p_L$ and $\p_F$ denote the unique prime lying above $p$ in $O_L$ and $O_{F}$ respectively.  We have the following equality:
\begin{align*}
\p_L= (1-\zeta_p) \text{ as ideals in } O_{L}, \qquad \p_F= (2-\zeta_p - \zeta^{-1}_p) \text{ as ideals in } O_{F}.
\end{align*}
Since the field extensions $L/\Q_p$ and $F/\Q_p$ are totally ramified, we have the following natural isomorphisms of residue fields:
\begin{align} \label{totally-ramified-residue-fields}
\frac{\Z}{p\Z} \stackrel{\cong}{\hookrightarrow} \frac{O_F}{\p_F} \stackrel{\cong}{\hookrightarrow} \frac{O_L}{\p_L}.
\end{align}

Note that the Artin-Wedderburn theorem gives us an isomorphism of $\Q_p$-algebras:
\begin{align} \label{artin-wedderburn-iso-dihedral}
\Q_p[D_{2p}] \cong \Q_p[C_2] \times L<\tau>.
\end{align}
Here, $C_2$ is a cyclic group of order 2 with generator $e$. The central simple $F$-algebra $L<\tau>$ is given by $L \oplus L \cdot \tau$, where we have $\tau^2=1$ and $\tau a = \alpha(a) \tau$, for all $a \in L$. The center of $L<\tau>$  equals $F$. Note that since $(\tau-1)(\tau+1)=0$, the simple algebra $L<\tau>$ cannot be a division algebra. Also, $\dim_{F} L<\tau>=4$. By a simple dimension counting argument and the Artin-Wedderburn theorem, one can see that we have the isomorphism $L<\tau> \cong M_2(F)$ of $F$-algebras. However, it will be convenient to view $L<\tau>$ naturally inside $L<\tau> \otimes_F L $ since this allows us to consider the isomorphism $L<\tau> \otimes_F L \cong M_2(L)$.  One can obtain such an isomorphism by considering the following assignments:
\begin{align} \label{embedding-into-matrices}
\zeta_p \rightarrow \left[\begin{array}{cc} \zeta_p & 0 \\ 0 & \zeta_p^{-1} \end{array} \right], \quad \tau \rightarrow \left[\begin{array}{cc} 0 & 1 \\ 1 & 0\end{array}\right].
\end{align}

The isomorphism in (\ref{artin-wedderburn-iso-dihedral}) is chosen to agree with the following two projection maps: \begin{align*}
& \sigma_1: \Q_p[D_{2p}] \rightarrow \Q_p[C_2], \qquad \qquad && \sigma_2: \Q_p[D_{2p}] \rightarrow L<\tau>. \\
& \sigma_1(x) =e, \ \sigma_1(y) =1 && \sigma_2(x) =\tau,  \ \sigma_2(y) = \zeta_p.
\end{align*}

Let $n$ denote a positive integer. Let us label $\det$, $\Det_1$ and $\Det_2$ for the Dieudonn\'e determinants involving the invertible matrices in $\Gl_n(\Q_p[D_{2p}])$, $\Gl_n(\Q_p[C_2])$ and $\Gl_n(L<\tau>)$ respectively. Note that the reduced norm $\mathrm{Nrd}:L<\tau>^* \rightarrow F^*$ is given by the formula $\mathrm{Nrd}(c +d \tau ) = c \alpha(c) - d \alpha(d)$. The reduced norm gives us an isomorphism $\mathrm{Nrd}:(L<\tau>^*)^{ab} \xrightarrow {\cong} F^*$. See Theorem 2.3 in Oliver's book \cite{MR933091}. We have the following diagram relating these  Dieudonn\'e determinants:
\begin{align*}
\xymatrix{
\Gl_n(\Q_p[D_{2p}])  \ar[rd]^{\det} \ar[r]^{\cong \qquad \qquad } &  \Gl_n(\Q_p[C_2]) \times \Gl_n(L<\tau>) \ar[d]^{(\det_1, \det_2)} \\
& \Q_p[C_2]^* \times (L<\tau>^*)^{ab} \ar[r]^{\qquad \mathrm{Nrd}}_{\qquad \cong}
& \Q_p[C_2]^* \times F^*
}
\end{align*}

We will follow the description of the integral group ring $\Z_p[D_{2p}]$, i.e. its image under the isomorphism (\ref{artin-wedderburn-iso-dihedral}), given in Section 8 of the work of Reiner and Ullom \cite{MR0304470}.
 Let $O_{L<\tau>}$ denote the subring $O_{L} \oplus O_{L}\cdot \tau$ of the central simple $F$-algebra $L<\tau>$.  This is a maximal $\Z_p$-order inside $L<\tau>$. Under the isomorphism (\ref{artin-wedderburn-iso-dihedral}), we have
\begin{align}
\Z_p[D_{2p}] \cong \{ (a + be, c+d\tau ) \in \Z_p[C_2] \times O_{L<\tau>}, \text{ such that } a \equiv c \ (\mathrm{mod} \ \p_L), b \equiv d \ (\mathrm{mod} \ \p_L) \}.
\end{align}

The ring $\Z_p[D_{2p}]$ is a semi-local ring whose Jacobson radical is given below:
\begin{align*}
\mathrm{Jac}(\Z_p[D_{2p}]) = \ker\left( \Z_p[D_{2p}] \rightarrow \mathbb{F}_p[C_2]\right).
\end{align*}
Let $\m$ denote the ideal $(p,y-1,x-1)$ and let $\m'$ denote the ideal $(p,y-1,x+1)$ in $\Z_p[D_{2p}]$. The ideals $\m$ and $\m'$ are both left-maximal and right-maximal ideals. An element $u$ in $\Z_p[D_{2p}]$, that does not belong to both $\m$ and $\m'$, must be a unit in the ring $\Z_p[D_{2p}]$. See Theorem 1.11 in Oliver's book \cite{MR933091}.

The description of the Jacboson radical of $\Z_p[C_2]$ is given below:
\begin{align*}
 \mathrm{Jac}(\Z_p[C_2]) &= \{a+be \in \Z_p[C_2], \text{ such that } a \in p\Z_p, \ b \in p \Z_p\}.
 \end{align*}

We would like to record three observations.

\begin{enumerate}
\item Suppose we are given an element $a + be \in \mathrm{Jac}(\Z_p[C_2])$, where $a,b \in p\Z_p$. Suppose also that we are given an element $\varpi$ in the maximal ideal $\p_F$ of the ring $O_{F}$. It will be possible to write $\varpi$ as $(2-\zeta_p - \zeta^{-1}_p)^n v$ for some positive integer $n$ and some unit $v$ of the ring $O_F$. It is easy to see that $\mathrm{Nrd}(1-\zeta_p)=2-\zeta_p -\zeta_p^{-1}$. Also, the restriction of the reduced norm $\mathrm{Nrd}: O_{L<\tau>}^\times \twoheadrightarrow O_{F}^\times$ is surjective on the units. See Theorem 2.3 in Oliver's book \cite{MR933091}. This lets us find a unit $u$ in the ring $O_{L<\tau>}$ such that $\mathrm{Nrd}(u)=  v$. Our observations allow us to make the following deduction:
\begin{align} \label{everyelement-determinant}
\epsilon = \bigg(a+be, \  (1-\zeta_p)^n u \bigg) \in \Z_p[D_{2p}], \qquad \mathrm{Nrd}\big((1-\zeta_p)^nu  \big) = \varpi.
\end{align}

\item Suppose now we are given an element $a + be \in \Z_p[C_2]$ such that (i) both $a$ and $b$ belong to $\Z_p^\times$ and such that (ii) $a - b \equiv 0 \ (\mathrm{mod }\ p) $ or $a + b \equiv 0 \ (\mathrm{mod }\ p) $. Suppose also that we are given an element $\varpi$ in the maximal ideal $\p_F$ of the ring $O_{F}$. The restriction of the reduced norm map $\mathrm{Nrd}: 1 + \p_L \twoheadrightarrow 1 + \p_F$ is surjective. See Chapter 1, Section 8, Proposition 2 of Fr\"ohlich's article on local fields \cite{MR0236145} as to why the reduced norm map is surjective on the group of principal units for tamely ramified extensions. So, it will be possible to find an element $u_1 \in 1 + \p_L $ such that $\mathrm{Nrd}(u_1)  = 1 + \frac{\varpi}{a^2}$. Set
\begin{align*}
z = \left\{ \begin{array}{cc} au_1 + a\tau, & \text{ if } a - b \equiv 0 \ (\mathrm{mod }\ p) \\ au_1 - a\tau, & \text{ if } a + b \equiv 0 \ (\mathrm{mod }\ p)  \end{array} \right.
\end{align*}

It is then straightforward to check that
\begin{align} \label{oneelement-determinant}
\epsilon= \bigg(a+be, \  z  \bigg) \in \Z_p[D_{2p}], \qquad \mathrm{Nrd}\left(z \right) = \varpi.
\end{align}

\item Suppose we have two elements $a$ and $a'$ in the ring $\Z_p[D_{2p}]$ such that \begin{align*}
& a \notin \m,\quad  && a \in \m',  \\ & a' \in \m,\quad && a' \notin \m'.
\end{align*}
Then, $a+a' \notin \m$ and $a + a' \notin \m'$. In particular, $a+a'$ is a unit in the ring $\Z_p[D_{2p}]$.
\end{enumerate}

Let $A \in M_{n}(\Z_p[D_{2p}]) \cap \Gl_{n}(\Z_p[D_{2p}])$. We shall show that we can find a representative for $\det(A)$ in $M_n(\Z_p[D_{2p}])$. \\

First, we will consider the case when $A$ belongs to $M_n(\m\Z_p[D_{2p}])$ or $M_n(\m'\Z_p[D_{2p}])$. Without loss of generality, we shall assume that $A$ belongs to $M_n(\m'\Z_p[D_{2p}])$. The argument proceeds similarly when $A$ belongs to $M_n(\m\Z_p[D_{2p}])$. Note that the ring $\Q_p[C_2]$ is commutative. The matrix $\sigma_1(A)$ is an $n \times n$ matrix with entries in $\Z_p[C_2]$. Let us write $\Det_1(\sigma_1(A))$  as $a+be$. Since we have assumed that $A$ belongs to $M_n(\m'\Z_p[D_{2p}])$, every entry in the matrix $\sigma_1(A)$ must belong to ideal $(p,e+1)$. So, $p$ must divide $a - b$. As a result, one sees that in this first case, we have \begin{align*}
(1) \ a + be \in \mathrm{Jac}(\Z_p[C_2]), \text{ or} \qquad  (2) \ a ,b \in \Z_p^\times \text{ such that }a -b \equiv 0 \ \mathrm{mod} \ p.
\end{align*}
The matrix $\sigma_2(A)$ is an $n \times n$ matrix with entries in $O_{L<\tau>}$. One can check that the $(i,j)$-th entry of $\sigma_2(A)$ can be written as $a_{ij}(\tau+1) + b_{ij} p + c_{ij}(\zeta_p-1)$, for some elements $a_{ij}$, $b_{ij}$ and $c_{ij}$ in $O_{L<\tau>}$.  We will use the assignments given in (\ref{embedding-into-matrices}) to fix an embedding $\mathfrak{i}: L<\tau> \hookrightarrow M_2(L)$. It will be possible to view the matrix $\mathfrak{i}(\sigma_2(A))$ as a matrix in $M_{2n}(O_L)$.  Note that $\zeta_p-1$ and $p$ belong to the ideal $\p_L O_L$. The assignment given in (\ref{embedding-into-matrices}) sends $\tau +1$ to the $2\times 2$ matrix $\left[\begin{array}{cc} 1 & 1 \\ 1 & 1\end{array}\right]$. We have the following equality of $2n\times 2n$ matrices modulo $\p_L O_L$:
\begin{align*}
\mathfrak{i}(\sigma_2(A))  \equiv \left[\begin{array}{ccc} \mathfrak{i}(a_{11}(\tau+1))  &  \cdots & \mathfrak{i}(a_{1n}(\tau+1))   \\ \vdots &  \ddots  & \vdots \\ \mathfrak{i}(a_{n1}(\tau+1))  & \cdots & \mathfrak{i}(a_{nn}(\tau+1)) \end{array} \right]    \equiv \left[\begin{array}{ccc} \mathfrak{i}(a_{11})  &  \cdots & \mathfrak{i}(a_{1n})   \\ \vdots &  \ddots  & \vdots \\ \mathfrak{i}(a_{n1})  & \cdots & \mathfrak{i}(a_{nn}) \end{array} \right] \left[\begin{array}{cccccccc} 1  & 1 & 0 & 0  & \cdots & 0  &0   \\  1  & 1 & 0 & 0  & \cdots & 0 & 0 \\ 0  & 0 & 1 & 1  & \cdots & 0  &0   \\  0  & 0 & 1 & 1  & \cdots & 0  &0   \\ & & & & \ddots & & \\ 0  & 0 & 0 & 0 & \cdots & 1 & 1 \\  0 &0 & 0 & 0 & \cdots & 1  & 1 \end{array} \right].
\end{align*}
It is then easy to see that \begin{align}\label{intermediate-det-field}
\Det(\mathfrak{i}(\sigma_2(A))) \equiv 0 \ (\mathrm{ mod }\ \p_L).
\end{align}
The $\Det$ in (\ref{intermediate-det-field}) involves the determinant, over the commutative field $L$, of the $2n \times 2n$ matrix $\mathfrak{i}(\sigma_2(A))$. This lets us conclude that $\mathrm{Nrd}(\sigma_2(A))$ belongs to $\p_F O_F$ (since $O_F \cap \p_L = \p_F$). If we let $\varpi$ denote $\mathrm{Nrd}(\sigma_2(A))$, we have
$$\mathrm{Nrd}(\sigma_2(A))  = \varpi \in \p_F O_F.$$
In this first case, our earlier observations in equation (\ref{everyelement-determinant}) and equation (\ref{oneelement-determinant}) allow us to find an element $\epsilon$ in $\Z_p[D_{2p}]$ such that $\det([\epsilon])=\det(A)$. \\

In the second case, we shall suppose that there exists entries $a_{i,j}$ and $a_{i',j'}$ of the matrix $A$ such that $a_{i,j} \notin \m$ and $a_{i',j'} \notin \m'$. It is then straightforward (but slightly tedious) to see that one can perform a sequence of elementary row and column operations on the matrix $A$ to find a new matrix, one of whose entries lies in neither $\m$ nor $\m'$. Such an entry must be a unit in the ring $\Z_p[D_{2p}]$.  Since the elementary row and column operations do not change the Dieudonn\'e determinant, this new matrix would have the same Dieudonn\'e determinant as the matrix $A$. So without loss of generality, in this second case, we can suppose that there exists an entry $u$, in the $n \times n$ matrix $A$,  which is a unit in the ring $\Z_p[D_{2p}]$. One can perform elementary row and column operations (similar to the ones used to obtain the formula in (\ref{computation-2-by-2})) and use permutation matrices to obtain an $n \times n$ matrix $B_n$ so that \begin{align*}
B_n :=\left[\begin{array}{cc} u & 0 \\ 0 & B_{n-1}\end{array} \right], \text{ such that } B_{n-1} \in M_{n-1}\left(\Z_p[D_{2p}]) \cap \Gl_{n-1}(\Z_p[D_{2p}]\right), \quad \det(A) = \det(B_n).
\end{align*}
We have $\det(A)$ equals $u \det(B_{n-1})$ as elements of $\left(\Q_p[D_{2p}]^{*}\right)^{ab}$. One can now use mathematical induction to conclude that Question \ref{ore-localization-question-refined} has a positive answer for the pair $(\Z_p[D_{2p}],\Q_p[D_{2p}])$. \qed
\end{example}

\begin{example} \label{quaternion-example}
Let $H_8$ denote the group of quaternions. This group has 8 elements given by the following presentation:
\begin{align*}
H_8 = \{x,y |x^4=1, x^2= y^2 , yxy^{-1} = x^{-1}\}
\end{align*}
Note that $\Z_2[H_8]$ is a (non-commutative) local ring and that the quotient $\frac{\Z_2[H_8]}{\mathrm{Jac}(\Z_2[H_8])}$ is isomorphic to $\mathbb{F}_2$. The ring $\Z_2[H_8]$ satisfies \ref{Wproperty}. Question \ref{ore-localization-question-refined} has a negative answer for the pair $(\Z_2[H_8],\Q_2[H_8])$. In fact, we will give an example of a matrix $A$ in $M_2(\Z_2[H_8]) \cap \Gl_2(\Q_2[H_8])$, such that under the Dieudonn\'e determinant $\det: \Gl_2(\Q_2[H_8]) \rightarrow \left(\Q_2[H_8]^{*}\right)^{ab}$, the determinant $\det(A)$ does not lie in the image of the map $\Z_2[H_8] \cap \Q_2[H_8]^* \rightarrow \left(\Q_2[H_8]^{*}\right)^{ab}$. \\

Note that the Artin-Wedderburn theorem gives us an isomorphism of $\Q_2$-algebras:
\begin{align} \label{artin-wedderburn-iso}
\Q_2[H_8] \cong \Q_2[C_2 \oplus C_2] \times D.
\end{align}
Here, $D$ is the division algebra of rational quaternions given by $$ D = \Q_2 \oplus \Q_2 i \oplus \Q_2 j \oplus \Q_2 ij.$$ We will write the Klein-four group $C_2 \oplus C_2$ as $\{e,f | e^2 = f^2 =1, ef = fe\}$, so that
\begin{align*} \Q_2[C_2 \oplus C_2] = \Q_2 \oplus \Q_2 e \oplus \Q_2f \oplus \Q_2 ef.
\end{align*}
The isomorphism in (\ref{artin-wedderburn-iso}) is chosen to agree with the following two projection maps: \begin{align*}
& \sigma_1: \Q_2[H_8] \rightarrow \Q_2[C_2 \oplus C_2], \qquad \qquad && \sigma_2: \Q_2[H_8] \rightarrow D. \\
& \sigma_1(x) =e, \sigma_1(y) =f && \sigma_2(x) =i, \sigma_2(y) = j.
\end{align*}

Let us label $\det$, $\Det_1$ and $\Det_2$ for the Dieudonn\'e determinants involving the invertible matrices in $\Gl_2(\Q_2[H_8])$, $\Gl_2(\Q_2[C_2 \oplus C_2])$ and $\Gl_2(D)$ respectively. Note that the reduced norm $\mathrm{Nrd}:D^* \rightarrow \Q_2^*$ is given by the formula $\mathrm{Nrd}(b_1 + b_2i + b_3j + b_4ij ) = b_1^2 + b_2^2 + b_3^2 + b_4^2$. The reduced norm gives us an isomorphism $\mathrm{Nrd}:(D^*)^{ab} \rightarrow \Q_2^*$. See Theorem 2.3 in Oliver's book \cite{MR933091}. We have the following diagram relating these  Dieudonn\'e determinants:
\begin{align*}
\xymatrix{
\Gl_2(\Q_2[H_8])  \ar[rd]^{\det} \ar[r]^{\cong \qquad \qquad } &  \Gl_2(\Q_2[C_2 \oplus C_2]) \times \Gl_2(D) \ar[d]^{(\det_1, \det_2)} \\
& \Q_2[C_2 \oplus C_2]^* \times (D^*)^{ab} \ar[r]^{\mathrm{Nrd}}_{\cong}
& \Q_2[C_2 \oplus C_2]^* \times \Q_2^*
}
\end{align*}

We will follow the description of the integral group ring given in Section 7b of the work of Reiner and Ullom \cite{MR0349828}. Let $Z_{D} :=\Z_2 \oplus \Z_2i\oplus \Z_2j \oplus \Z_2ij$. Reiner and Ullom identify $\Z_2[H_8]$ with the following subring of $\Q_2[C_2 \oplus C_2] \times D$ under the isomorphism given in (\ref{artin-wedderburn-iso}) :
\begin{align} \label{integral-quaternions}
\Z_2[H_8] \cong \bigg\{ & \bigg(a_1 + a_2 e + a_3 f  + a_4 ef , b_1 + b_2 i + b_3 j + b_4 ij\bigg) \in \Z_2[C_2 \oplus C_2] \times Z_{D}, \\  & \notag \text{ such that } (a_1,a_2,a_3,a_4) \equiv (b_1,b_2,b_3,b_4) \in \mathbb{F}_2 [C_2 \oplus C_2] \bigg\}.
\end{align}
Note that $\Z_2[H_8]$ is a local ring, with a unique maximal left ideal given below:
\begin{align*}
\mathrm{Jac}(\Z_2[H_8]) =\big\{ & a_0 + a_1 x + a_2 x^2 + a_3 x^3 + a_4 y + a_5 xy + a_6 x^2y + a_7 x^3y \in  \Z_2[H_8] \\ & \text{ such that } a_0 + a_1  + a_2 + a_3 + a_4  + a_5  + a_6  + a_7 \in 2\Z_2 \big\}.
\end{align*}
Suppose we have a matrix  in $M_2(\Z_2[H_8] \cap \Gl_2(\Q_2[H_8])$. If one of the non-zero elements in this matrix belongs to the center of $\Q_2[H_8]$ or is a unit in $\Z_2[H_8]$, one can use a formula analogous to the one in (\ref{computation-2-by-2}), to show that the Dieudonn\'e determinant of this matrix does  have a representative in $\Z_2[H_8]$. \\

Let $A$ denote the $2 \times 2$ matrix $ \left[\begin{array}{cc} 9 + x + 2y & 1 + y \\ 1 + xy & 9+x\end{array} \right]$. Note that every element of this matrix belongs to the maximal ideal of $\Z_2[H_8]$. Note also that both $1+y$ and $1+xy$ do not belong to the centralizer of $9+x+2y$ or to the centralizer of $9+x$. The field $\Q_2(\sqrt{-1})$ is a splitting field for $D$. That is, we have an isomorphism $D \otimes_{\Q_2} \Q_2(\sqrt{-1}) \cong M_2(\Q_2(\sqrt{-1}))$ obtained by the following assignments:
\begin{align*}
x \rightarrow \left[\begin{array}{cc} \sqrt{-1} & 0 \\ 0 & -\sqrt{-1}  \end{array}\right], \quad y \rightarrow \left[\begin{array}{cc} 0 &  1 \\ -1 & 0 \end{array}\right], \quad xy \rightarrow \left[\begin{array}{cc} 0 & \sqrt{-1}  \\ \sqrt{-1}  & 0 \end{array}\right].
\end{align*}
A direct computation then gives us the following equalities:
\begin{align*}
 \Det_1(\sigma_1(A)) &= 81 + 17e + 17f + ef \in \Q_2[C_2\ \oplus C_2]^*, \\
 \mathrm{Nrd}\left(\Det_2(\sigma_2(A)) \right) &= \det\left[\begin{array}{cccc}9+\sqrt{-1} & 2  & 1 & 1 \\ -2 & 9 - \sqrt{-1} & -1 & 1 \\  1 & \sqrt{-1} & 9 + \sqrt{-1} & 0 \\ \sqrt{-1} & 1  & 0 & 9 - \sqrt{-1} \end{array} \right]=  8 \times 857 \in \Q_2^*.
\end{align*}
Recall that if $$a \equiv 1 \mod 2\Z_2 \implies a^2 \equiv 1 \mod 8\Z_2.$$ Let us use this observation along with the isomorphism in (\ref{integral-quaternions}). Suppose that the element $$\left(81 + 17e + 17f + ef , b_1 + b_2 i + b_3 j + b_4 ij\right) \in \Z_2[C_2 \oplus C_2] \times Z_{D}$$ belongs to the subring $\Z_2[H_8]$. Then, $\mathrm{Nrd}(b_1 + b_2 i + b_3 j + b_4 ij) \equiv 4 \mod 8\Z_2$ (in particular, the reduced norm is not divisible by  $8$). This shows us the Dieudonn\'e determinant $\det(A)$, for the $2\times 2$ matrix $A$ given in this example does not have any representative in the integral group $\Z_2[H_8]$.
\qed
\end{example}

\subsection{A maximal $\Lambda$-order} \label{sec:maxorder} \mbox{}

The Artin-Wedderburn theorem gives us the following isomorphism of $\Q_p$-algebras:
\begin{align}\label{AW-groupring}
\Q_p[G] \cong \prod_i M_{m_i}(D_i).
\end{align}
Here, $D_i$ is a finite-dimensional division algebra over $\Q_p$. Let $O_{D_i}$ denote the unique maximal $\Z_p$-order inside $D_i$.
Note that any $\Z_p$-order in $\Q_p[G]$ can be embedded in a maximal order and any two maximal orders in $M_n(D_i)$ are isomorphic via an inner automorphism of $M_n(D_i)$. This allows us to choose the isomorphism in (\ref{AW-groupring}) so that the following diagram commutes:
\begin{align}
\xymatrix{
\Z_p[G] \ar[r] \ar[d] & \Q_p[G] \ar[d]^{\cong} \\ \prod_i M_{m_i}\left(O_{D_i}\right) \ar[r]& \prod_i M_{m_i}(D_i).
}
\end{align}
The horizontal maps are the natural injections. The vertical map on the left is also injective. This induces the commutative diagram given below:
\begin{align} \label{comm-diag-artin}
\xymatrix{
\Lambda[G] \ar[r] \ar[d] & Q_\Lambda[G] \ar[d]^{\cong} \\ \prod_i M_{m_i}\left(O_{D_i} \otimes_{\Z_p} \Lambda \right) \ar[r]& \prod_i M_{m_i}(D_i\otimes_{Q_p} Q_\Lambda).
}
\end{align}
Here, $Q_\Lambda$ denotes the fraction field of $\Lambda$. Once again, the horizontal maps are injective. The vertical map on the left is also injective. \\

For the rest of Section \ref{section-non-commutative}, we let $D$ denote a divison ring containing $\Q_p$ inside its center and such that $[D:\Q_p]$ is finite. We let $F$ denote the center of $D$, whose ring of integers is denoted by $O_F$. Let $L$ denote (unfortunately, in other sections, the letter $L$ has been used in another context. In this section, and only in this section, we use the letter $L$ to denote the maximal subfield of the division algebra $D$) a maximal subfield of $D$ containing $F$. Let $O_L$ denote the ring of integers in $L$. The fields $F$ and $L$ are finite extensions of $\Q_p$. We let $O_D$ denote the unique maximal $\Z_p$-order inside $D$. We recall some of the properties of $O_D$ from Reiner's book on maximal orders \cite{MR0393100}:

\begin{enumerate}
\item $O_D$ is the integral closure of $\Z_p$ in $D$. See Theorem 12.8 in Reiner's book \cite{MR0393100}.
\item There exists a discrete valuation $w$ on $D$, extending the $p$-adic valuation on $\Z_p$. The ring $O_D$ is the valuation ring, with respect to $w$, inside $D$.  See Chapter 12 in Reiner's book \cite{MR0393100}. We let $\pi_D$ denote a uniformizer in $O_D$, for this valuation.
\end{enumerate}

We will also use the following notations:
\begin{align*}
& F_\Lambda:= F \otimes_{\Q_p} Q_\Lambda, \quad O_{F_\Lambda} := O_F \otimes_{\Z_p} \Lambda, \quad L_\Lambda:= L \otimes_{\Q_p} Q_\Lambda, \quad O_{L_\Lambda} := O_L \otimes_{\Z_p} \Lambda
\\ & D_\Lambda := \underbrace{D \otimes_{F}  F_\Lambda}_{\cong D \otimes_{\Q_p} Q_\Lambda}, \quad O_{D_\Lambda} :=O_D \otimes_{\Z_p} \Lambda
\end{align*}
We have the following equalities of vector space dimensions (see Theorem 7.15 in Reiner's book \cite{MR0393100}):
\begin{align} \label{index-d}
\sqrt{\dim_{F_\Lambda} D_\Lambda} = \sqrt{\dim_F D} = \dim_F L = \dim_{F_\Lambda} L_\Lambda = \dim_{L} D = \dim_{L_\Lambda}D_\Lambda.
\end{align}

The number, that is equal to all the quantities appearing in (\ref{index-d}), is called the index of the division algebra $D$ in the Brauer group $\mathrm{Br}(F)$.  As we will shall show in Lemma \ref{establish-division-ring}, $D_\Lambda$ is a division algebra with center $F_\Lambda$. The number appearing in (\ref{index-d}), is also the index of the division algebra $D_\Lambda$ in the Brauer group $\mathrm{Br}(F_\Lambda)$.

One can obtain a non-canonical isomorphism $\Lambda \cong \Z_p[[x]]$ of topological rings, by sending a topological generator  $\gamma_0$ of the topological group $\Gamma$ to the element $x+1$ in $\Z_p[[x]]$. Since $O_D$ has finite rank as a $\Z_p$-module, we have the following lemma:
\begin{lemma} \label{power-series}
The isomorphism $\Lambda \cong \Z_p[[x]]$ of topological rings, obtained by sending a topological generator $\gamma_0$ of $\Gamma$ to $x+1$ $\Lambda \cong \Z_p[[x]]$, lets us obtain the following isomorphisms:
\begin{align*}
\Lambda \cong \Z_p[[x]], \quad O_{F_\Lambda} \cong O_F[[x]], \quad O_{L_\Lambda} \cong O_L[[x]], \quad O_{D_\Lambda} \cong O_D[[x]].
\end{align*}
\end{lemma}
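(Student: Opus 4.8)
The plan is to reduce the lemma to a single elementary observation: for a free $\Z_p$-module $M$ of finite rank, the natural $\Z_p[[x]]$-linear map
\[
\mu_M : M \otimes_{\Z_p} \Z_p[[x]] \longrightarrow M[[x]], \qquad m \otimes f \longmapsto f\cdot m,
\]
where $M[[x]]$ denotes the formal power series with coefficients in $M$ and $m$ is regarded as a constant series, is an isomorphism. First I would record that $O_F$, $O_L$ and $O_D$ are each free of finite rank as $\Z_p$-modules. For $O_F$ and $O_L$ this is the standard fact that the ring of integers of a finite extension of $\Q_p$ is $\Z_p$-free of rank its degree. For $O_D$ it is a full $\Z_p$-lattice in the finite-dimensional $\Q_p$-vector space $D$ (indeed it is the valuation ring of the valuation $w$ recalled above from Reiner \cite{MR0393100}), hence finitely generated and torsion-free over the principal ideal domain $\Z_p$, hence free of finite rank.

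Next I would check that $\mu_M$ is an isomorphism of topological rings. Choosing a $\Z_p$-basis $e_1,\dots,e_n$ of $M$ exhibits $M \otimes_{\Z_p} \Z_p[[x]]$ as a free $\Z_p[[x]]$-module with basis $e_1\otimes 1,\dots,e_n\otimes 1$, and exhibits $M[[x]]$ as a free $\Z_p[[x]]$-module with basis $e_1,\dots,e_n$ (each element of $M[[x]]$ is uniquely $\sum_j f_j e_j$ with $f_j \in \Z_p[[x]]$). Since $\mu_M$ carries the first basis onto the second it is a $\Z_p[[x]]$-module isomorphism, and a homeomorphism for the natural ($x$-adic) topologies on both sides, being the identity on $\Z_p[[x]]^n$ in these coordinates. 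When $M$ is moreover a $\Z_p$-algebra — the cases $M=O_F,O_L,O_D$ — the map $\mu_M$ is multiplicative and unital: on pure tensors $(m\otimes f)(m'\otimes f')=(mm')\otimes(ff')$ is sent to $(ff')(mm')=(fm)(f'm')$, and these identities extend by linearity, while $1\otimes 1\mapsto 1$. Hence $\mu_M$ is an isomorphism of topological rings.

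Finally, combining this with the given isomorphism $\Lambda\cong\Z_p[[x]]$ yields
\[
O_{F_\Lambda} = O_F\otimes_{\Z_p}\Lambda \;\cong\; O_F\otimes_{\Z_p}\Z_p[[x]] \;\xrightarrow{\ \mu_{O_F}\ }\; O_F[[x]],
\]
and likewise $O_{L_\Lambda}\cong O_L[[x]]$ and $O_{D_\Lambda}=O_D\otimes_{\Z_p}\Lambda\cong O_D[[x]]$, the first claimed isomorphism $\Lambda\cong\Z_p[[x]]$ being the input itself; functoriality of $\mu_M$ in $M$ moreover makes these compatible with the natural inclusions $\Z_p[[x]]\hookrightarrow O_F[[x]]\hookrightarrow O_L[[x]]\hookrightarrow O_D[[x]]$. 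There is no real obstacle here; the only point deserving attention is that no completion intervenes in forming $O_{F_\Lambda}$, $O_{L_\Lambda}$, $O_{D_\Lambda}$ — that is, the ordinary tensor product $M\otimes_{\Z_p}\Z_p[[x]]$ already equals $M[[x]]$ — which is precisely where the finite-rank hypothesis on $O_D$ (and on $O_F$, $O_L$) is used, the corresponding statement being false for $\Z_p$-modules of infinite rank.
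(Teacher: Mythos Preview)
Your proof is correct and is essentially the same approach as the paper, which simply remarks that the lemma follows ``since $O_D$ has finite rank as a $\Z_p$-module'' and gives no further details. You have supplied the routine verification that for a finite-rank free $\Z_p$-module $M$ one has $M\otimes_{\Z_p}\Z_p[[x]]\cong M[[x]]$, which is precisely the content the paper leaves implicit.
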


\begin{lemma} \label{global-dim}
$O_{D_\Lambda}$ is a local ring with global dimension equal to two.
\end{lemma}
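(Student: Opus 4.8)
The plan is to reduce the statement to the structure of $O_D$ as a noncommutative complete discrete valuation ring, treating the adjoined power series variable separately. Using Lemma~\ref{power-series} I would identify $O_{D_\Lambda}$ with the power series ring $O_D[[x]]$, in which $x$ is a \emph{central} indeterminate; recall (Theorems 12.8 and 13.2 of \cite{MR0393100}) that $O_D$ is the valuation ring of $D$, with unique maximal two-sided ideal $\pi_D O_D = O_D \pi_D$ and finite residue division ring $k_D := O_D/\pi_D O_D$, and that $O_D$ is a noncommutative principal ideal domain. Since $O_D$ is a finitely generated free $\Z_p$-module, $O_{D_\Lambda}$ is a finitely generated free $\Lambda$-module; in particular it is Noetherian (power series over a Noetherian ring is Noetherian) and module-finite over the complete local ring $\Lambda$, hence semiperfect.

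First I would check that $O_{D_\Lambda}$ is local. Because $O_{D_\Lambda}$ is module-finite over $\Lambda$, one has $\mathrm{Jac}(\Lambda)\,O_{D_\Lambda} \subseteq \mathrm{Jac}(O_{D_\Lambda})$, and $O_{D_\Lambda}/\mathrm{Jac}(\Lambda)O_{D_\Lambda} \cong O_D/pO_D$, whose Jacobson radical is $\pi_D O_D/pO_D$ with quotient $k_D$. Chasing this through (and using that $p \in \pi_D O_D$) identifies $\mathrm{Jac}(O_{D_\Lambda})$ with the two-sided ideal $\mathfrak{m} := \pi_D O_{D_\Lambda} + x O_{D_\Lambda}$ and shows $O_{D_\Lambda}/\mathfrak{m} \cong k_D$, a division ring. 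Since a semiperfect ring whose quotient by its Jacobson radical is a division ring is local, this shows that $O_{D_\Lambda}$ is local with maximal ideal $\mathfrak{m}$. (One can also argue directly in $O_D[[x]]$: an element is a unit precisely when its constant term is a unit in $O_D$, using $x$-adic and $\pi_D$-adic completeness.)

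For the global dimension I would use that, since $O_{D_\Lambda}$ is Noetherian and local, its global dimension equals $\mathrm{pd}_{O_{D_\Lambda}}(k_D)$, so it suffices to write down a minimal free resolution of $k_D$ of length two. As $\mathfrak{m}$ is generated as a left ideal by $\pi_D$ and $x$, there is a surjection $\epsilon \colon O_{D_\Lambda}^2 \to \mathfrak{m}$, $(a,b) \mapsto a\pi_D + bx$. A short computation, using that $x$ is central and a nonzerodivisor and that $O_D$ is a domain, shows that $\ker \epsilon$ is freely generated by the single relation $(x, -\pi_D)$; the point to watch is that one must use \emph{right} multiplication by the non-central element $\pi_D$ (which is still a left-module endomorphism) when writing the boundary maps. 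One obtains an exact sequence
\[
0 \longrightarrow O_{D_\Lambda} \xrightarrow{\;a \mapsto (xa,\, -a\pi_D)\;} O_{D_\Lambda}^2 \xrightarrow{\;(a,b) \mapsto a\pi_D + bx\;} O_{D_\Lambda} \longrightarrow k_D \longrightarrow 0,
\]
all of whose differential entries lie in $\mathfrak{m}$; hence it is a minimal free resolution, so $\mathrm{pd}_{O_{D_\Lambda}}(k_D) = 2$ and the global dimension of $O_{D_\Lambda}$ is $2$.

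I do not anticipate a serious difficulty; the proof is routine once the quoted inputs are in place (module-finite over a complete local ring implies semiperfect; for a Noetherian local ring the global dimension equals the projective dimension of the residue division ring; equality of left and right global dimension for Noetherian rings). The one step that genuinely requires care --- and the main obstacle, such as it is --- is the noncommutative bookkeeping in the resolution of $k_D$, namely correctly distinguishing left from right multiplication by the non-central uniformizer $\pi_D$ when forming the length-two Koszul-type complex. As an alternative route for the value of the global dimension, once its finiteness has been established one could instead invoke Ramras's Auslander--Buchsbaum analogue for maximal orders over two-dimensional regular local rings \cite{MR0245572}, applied to $O_{D_\Lambda}$ viewed as a maximal $\Lambda$-order in the division algebra $D_\Lambda$ over the regular local ring $\Lambda$ of Krull dimension two.
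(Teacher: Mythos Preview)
Your proof is correct, but it takes a different and more explicit route than the paper. The paper's argument is a two-line citation: it invokes a result of Auslander (\cite{MR0117252}) to the effect that the noncommutative discrete valuation ring $O_D$ has global dimension one, and then applies the general formula $\mathrm{gldim}(R[[x]]) = \mathrm{gldim}(R) + 1$ for Noetherian $R$ from McConnell--Robson (\cite{MR1811901}, Theorem 7.5.3). Your approach instead computes $\mathrm{Jac}(O_{D_\Lambda})$ by hand and then builds an explicit minimal Koszul-type free resolution of the residue division ring $k_D$ of length two, carefully distinguishing left-module maps given by right multiplication by the non-central $\pi_D$ from those given by the central $x$. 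What the paper's approach buys is brevity and the avoidance of any explicit noncommutative bookkeeping; what yours buys is self-containment and a concrete description of $\mathfrak{m} = (\pi_D, x)$ that makes the ``regular sequence'' behaviour of $\pi_D, x$ visible. Your suggested alternative via Ramras's Auslander--Buchsbaum result is in fact closer in spirit to how the paper \emph{uses} this lemma later (Proposition \ref{establish-max-order}), though the paper does not invoke it here.
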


\begin{proof}
Let $\mathrm{gldim}$ denote global dimension. The power series ring $O_D[[x]]$ in one variable $x$ over the valuation ring $O_D$ is local. By Theorem 2.3 and Proposition 2.7 in \cite{MR0117252}, the global dimension $\mathrm{gldim}\left(O_D\right)$ equals one. Note that by Theorem 7.5.3 in \cite{MR1811901}, we have
\begin{align*} \mathrm{gldim}\left(O_D[[x]] \right)= \mathrm{gldim}\left(O_D\right) + 1 = 2.
\end{align*}
\end{proof}

\begin{lemma} \label{establish-division-ring}
$D_\Lambda$ is a divison ring with center $F_\Lambda$, satisfying property \ref{Wproperty}. $L_\Lambda$ is a maximal commutative subfield, inside $D_\Lambda$, containing $F_\Lambda$.
\end{lemma}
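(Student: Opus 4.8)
\textbf{Proof plan for Lemma \ref{establish-division-ring}.}

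The plan is to reduce everything to the key assertion that $D_\Lambda = D \otimes_F F_\Lambda$ is a division ring, and then read off the remaining statements from classical facts about division algebras together with Lemma \ref{power-series}. First I would observe that $F_\Lambda = F \otimes_{\Q_p} Q_\Lambda$ is a field: indeed $Q_\Lambda$ is the fraction field of $\Lambda \cong \Z_p[[x]]$, and $F/\Q_p$ is a finite separable extension, so $F \otimes_{\Q_p} Q_\Lambda$ is a finite product of fields; but $\Lambda$ is integrally closed (it is a regular, hence normal, domain), so by Lemma \ref{power-series}, $O_{F_\Lambda} \cong O_F[[x]]$ is a domain, forcing the product to have a single factor, i.e. $F_\Lambda$ is a field. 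Then $D_\Lambda$ is a finite-dimensional $F_\Lambda$-algebra of dimension $[D:F]$.

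The main step is to show $D_\Lambda$ has no zero divisors. Here the cleanest route is to use that $D$, as a central simple $F$-algebra, has index $\mathrm{ind}(D) = \sqrt{[D:F]}$, and that $D_\Lambda$ is Brauer-equivalent to the base change $[D] \otimes F_\Lambda \in \mathrm{Br}(F_\Lambda)$. Since $\dim_{F_\Lambda} D_\Lambda = \dim_F D = \mathrm{ind}(D)^2$, it suffices to show that the index does not drop under the base change $F \to F_\Lambda$, equivalently that $D_\Lambda$ is still a division algebra and not $M_n(D')$ for a smaller division algebra $D'$. I would argue this by a specialization/valuation argument: $O_{D_\Lambda} \cong O_D[[x]]$ by Lemma \ref{power-series}, and by Lemma \ref{global-dim} this is a local ring; moreover reducing modulo the maximal ideal $(x)$ recovers $O_D$, whose fraction field is the division ring $D$. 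An element $\alpha \in D_\Lambda$ that is a zero divisor could, after clearing denominators, be taken in $O_{D_\Lambda} = O_D[[x]]$ with a unit leading coefficient relation; but $O_D[[x]]$ is a domain (power series over a domain), so $D_\Lambda = O_D[[x]][\tfrac1x \cdot (\text{denominators})]$ — more precisely $D_\Lambda$ is the localization of $O_D[[x]]$ inverting $O_F[[x]] \setminus \{0\}$ — is a domain as well. A finite-dimensional algebra over a field that is a domain is a division ring, so $D_\Lambda$ is a division ring. Its center contains $F_\Lambda$; since $\dim_{F_\Lambda} D_\Lambda = \dim_F D$ and $F$ is the center of $D$, a dimension count (or the fact that the center of $D \otimes_F F_\Lambda$ is $Z(D) \otimes_F F_\Lambda = F_\Lambda$) shows $Z(D_\Lambda) = F_\Lambda$.

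For the remaining assertions: $L_\Lambda = L \otimes_F F_\Lambda \subset D_\Lambda$ is a commutative subring which, by the same domain argument (it is the analogous localization of $O_L[[x]]$), is a field; and $\dim_{F_\Lambda} L_\Lambda = \dim_F L = \sqrt{[D:F]} = \sqrt{[D_\Lambda:F_\Lambda]}$ by equation (\ref{index-d}), which is exactly the condition for a subfield of a central division algebra to be maximal, so $L_\Lambda$ is a maximal commutative subfield of $D_\Lambda$ containing $F_\Lambda$. Finally, property \ref{Wproperty} for $D_\Lambda$ is immediate: $D_\Lambda$ is a division ring, so it is already semisimple with $\mathrm{Jac}(D_\Lambda) = 0$ and $D_\Lambda/\mathrm{Jac}(D_\Lambda) = D_\Lambda$ is a single "matrix ring" $M_1(D_\Lambda)$; since $p$ is odd, $2$ is invertible in $\Z_p \subset D_\Lambda$, so as noted after the definition of \ref{Wproperty} the property holds automatically (and in particular $D_\Lambda \neq \mathbb{F}_2$ and $\neq M_2(\mathbb{F}_2)$). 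The main obstacle is the core claim that the index does not drop under base change to $F_\Lambda$; I expect the domain argument via $O_D[[x]]$ and Lemma \ref{power-series} to be the decisive and slightly delicate point, since one must be careful that $D_\Lambda$ really is obtained from the domain $O_D[[x]]$ by an Ore localization at the central multiplicative set $O_F[[x]] \setminus \{0\}$.
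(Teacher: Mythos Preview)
Your proposal is correct and follows essentially the same route as the paper: both arguments reduce to showing that $D_\Lambda$ is a domain by realizing it as the Ore localization of $O_{D_\Lambda}\cong O_D[[x]]$ at the central multiplicative set $O_F[[x]]\setminus\{0\}$, then handle the center, property \ref{Wproperty} (via $2\in D_\Lambda^\times$), and maximality of $L_\Lambda$ by the dimension relation in (\ref{index-d}). The only cosmetic differences are that the paper argues $F_\Lambda$ is a field directly from $Q_\Lambda/\Q_p$ being purely transcendental (rather than via $O_F[[x]]$ being a domain), and for maximality of $L_\Lambda$ it explicitly checks that $L_\Lambda$ splits $D_\Lambda$ and invokes Reiner's Corollary~28.10, which is equivalent to your dimension criterion.
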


\begin{proof}
Note that $Q_\Lambda$ is a purely transcendental extension of $\Q_p$, while $F$ is a finite (algebraic) extension of $\Q_p$. As a result, $F_\Lambda$ is a field. Since $D$ is a finite-dimensional central simple $F$-algebra, $D_\Lambda$ is a finite-dimensional central simple $F_\Lambda$-algebra (Corollary 7.8 in Reiner's book \cite{MR0393100}). A finite-dimensional central simple algebra over the field $F_\Lambda$ is isomorphic to a matrix ring over a division algebra. To prove the lemma, it suffices to show that $D_\Lambda$ is a (not necessarily commutative) domain.

For this, observe that $D_\Lambda$ is  the localization of $O_D \otimes_{O_F} O_F[[x]]$ at the multiplicatively closed set $O_F[[x]] \setminus \{0\}$. The set $O_F[[x]] \setminus \{0\} $ is central in $D_\Lambda$ and has no zero-divisors. Now, to prove the lemma, we are reduced to showing that $O_D \otimes_{O_F} O_F[[x]]$ is a (not necessarily commutative) domain. This follows simply because $O_{D} \otimes_{O_F} O_F[[x]]$, as observed in Lemma \ref{power-series}, is isomorphic to the power series ring $O_D[[x]]$ in one variable $x$ over the domain $O_D$, and is hence a (not necessarily commutative) domain.

$2$ is invertible in the division algebra $D_\Lambda$. As a  result, property \ref{Wproperty} holds.

An argument, similar to the one above, shows that $L_\Lambda$ is a field. Note that $L_\Lambda$ splits $D_\Lambda$ since
\begin{align*}
D_\Lambda \otimes_{F_\Lambda} L_\Lambda \cong (D \otimes_F F_\Lambda) \otimes_{F_\Lambda} (F_\Lambda \otimes_F L) \cong (D \otimes_F L ) \otimes_{F} F_\Lambda \cong M_d(L) \otimes_F F_\Lambda \cong M_d(L_\Lambda).
\end{align*}
Here, we let $d$ equal $\dim_{F_\Lambda} L_\Lambda$.  The relationship between various vector space dimensions given in (\ref{index-d}), along with Corollary 28.10 in Reiner's book \cite{MR0393100}, lets us conclude that $L_\Lambda$ is a maximal commutative subfield, inside $D_\Lambda$, containing $F_\Lambda$.
\end{proof}

\begin{proposition} \label{establish-max-order}
For every integer $m$, the matrix ring $M_m\left(O_{D_\Lambda}\right)$ is a maximal $\Lambda$-order inside $M_m\left(D_\Lambda\right)$. Furthermore, every maximal $\Lambda$-order inside $M_m\left(D_\Lambda\right)$ is isomorphic to $M_m(O_{D_\Lambda})$ by an inner automorphism via a unit in $M_m\left(D_\Lambda\right)$.
\end{proposition}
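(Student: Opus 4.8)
The plan is to reduce the statement to the corresponding, already-classical, fact about maximal orders over the complete discrete valuation ring $O_F$ inside the division algebra $D$, and then "spread it out" over $\Lambda$. Recall from Lemma \ref{establish-division-ring} that $D_\Lambda$ is a division ring with center $F_\Lambda$, and from Lemma \ref{power-series} that $O_{D_\Lambda} \cong O_D[[x]]$. The key observation I would isolate first is that $O_{D_\Lambda}$ is the integral closure of $\Lambda$ in $D_\Lambda$: since $\Lambda \cong \Z_p[[x]]$ is integrally closed in its fraction field $Q_\Lambda = F_\Lambda^{\Gal{}{}}$-base, and $O_D$ is the integral closure of $\Z_p$ in $D$ (Theorem 12.8 in \cite{MR0393100}), every element of $O_{D_\Lambda} = O_D \otimes_{\Z_p}\Lambda$ is integral over $\Lambda$; conversely an element of $D_\Lambda$ integral over $\Lambda$ has all its "coefficients" (with respect to an $O_F$-basis of $O_D$, extended $\Lambda$-linearly) integral over $\Lambda$, because the valuation $w$ on $D$ extends to a valuation on $D_\Lambda$ with valuation ring $O_{D_\Lambda}$ — here I would use that $O_D[[x]]$ is a local ring (Lemma \ref{global-dim}) whose unique maximal ideal is $(\pi_D, x)$-generated and whose units are detected modulo this ideal.

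Next I would record that $O_{D_\Lambda}$ is a $\Lambda$-order in $D_\Lambda$ in the sense required: it is a $\Lambda$-subalgebra, finitely generated as a $\Lambda$-module (indeed free, of rank $[D:\Q_p]$, by Lemma \ref{power-series}), and $O_{D_\Lambda}\otimes_\Lambda Q_\Lambda = D_\Lambda$. Then $M_m(O_{D_\Lambda})$ is a $\Lambda$-order in $M_m(D_\Lambda)$ for the same reasons. Maximality: any $\Lambda$-order $\Gamma'$ in $M_m(D_\Lambda)$ containing $M_m(O_{D_\Lambda})$ consists of elements integral over $\Lambda$; since matrix units already lie in $M_m(O_{D_\Lambda})$, a standard Morita/matrix-unit argument (as in Reiner \cite{MR0393100}, Theorem 8.7 and its proof) reduces integrality of a matrix in $M_m(D_\Lambda)$ over $\Lambda$ to integrality of each entry over $\Lambda$ — here is where I invoke the integral-closure characterization from the first paragraph — forcing $\Gamma' \subseteq M_m(O_{D_\Lambda})$, hence equality. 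This gives that $M_m(O_{D_\Lambda})$ is a maximal $\Lambda$-order.

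For the conjugacy statement: let $\Gamma'$ be any maximal $\Lambda$-order in $M_m(D_\Lambda)$. I would argue that $O_{D_\Lambda}$ is the unique maximal $\Lambda$-order in $D_\Lambda$ — uniqueness holds because $O_{D_\Lambda}$, being the integral closure of the integrally closed ring $\Lambda$ in the division algebra $D_\Lambda$ and a valuation ring, contains every $\Lambda$-order of $D_\Lambda$ — and then apply the Morita equivalence $M_m(D_\Lambda) \simeq D_\Lambda$: maximal orders in $M_m(D_\Lambda)$ correspond to $D_\Lambda$-lattices $M$ in $D_\Lambda^m$ with $\mathrm{End}_{D_\Lambda}(M)$ maximal, i.e. to $O_{D_\Lambda}$-lattices in $D_\Lambda^m$; since $O_{D_\Lambda}$ is local, every finitely generated torsion-free $O_{D_\Lambda}$-module is free (rank considerations force rank $m$), so any such $M$ is $O_{D_\Lambda}^m$ up to the action of $\Gl_m(D_\Lambda)$. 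Translating back, $\Gamma' = g\, M_m(O_{D_\Lambda})\, g^{-1}$ for some $g \in \Gl_m(D_\Lambda) = M_m(D_\Lambda)^\times$, which is precisely an inner automorphism. This mirrors Theorem 8.7 and Corollary 17.4 in \cite{MR0393100}; indeed one could simply cite those results after verifying that $O_{D_\Lambda}$ plays the role there played by the maximal order over a complete DVR.

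The main obstacle I anticipate is not the matrix-unit bookkeeping but the foundational claim that $O_{D_\Lambda}$ is the unique maximal $\Lambda$-order in $D_\Lambda$, equivalently that it is the full integral closure of $\Lambda$ in $D_\Lambda$. The subtlety is that $\Lambda$ is a two-dimensional regular local ring, not a DVR, so the clean valuation-theoretic arguments of Reiner's Chapter 12 do not apply verbatim; $D_\Lambda$ is not complete local and need not carry a single discrete valuation. I expect the right fix is to use that $O_{D_\Lambda}\cong O_D[[x]]$ is a finite $\Lambda$-algebra which is a local domain, integrally closed in $D_\Lambda$ (prove this via the Nakayama-type unit criterion: an element of $D_\Lambda$ integral over $\Lambda$ lies in $O_{D_\Lambda}$ because otherwise its image in $D_\Lambda/\text{(maximal ideal)}$ or a suitable localization would fail to be integral), and then maximality and uniqueness follow formally since any $\Lambda$-order is a finite $\Lambda$-module hence integral over $\Lambda$ hence contained in the integral closure $O_{D_\Lambda}$.
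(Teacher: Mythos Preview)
Your approach diverges from the paper's and contains a genuine gap. The paper does not argue via integral closures or lattice-freeness at all; it invokes a theorem of Ramras on maximal orders over two-dimensional regular local rings (Theorem 5.4 in \cite{MR0245572}), whose hypothesis is that the candidate order be semi-local of global dimension two. The paper then simply verifies this hypothesis for $M_m(O_{D_\Lambda})$ using Lemma \ref{global-dim} (which gives $\mathrm{gldim}(O_{D_\Lambda})=2$) together with Morita invariance of global dimension.

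Your argument breaks at two places. First, your claim that $O_{D_\Lambda}$ is the \emph{unique} maximal $\Lambda$-order in $D_\Lambda$ is false: the paper remarks immediately after its proof that one can manufacture examples where $O_{D_\Lambda}$ is not the unique maximal $\Lambda$-order in $D_\Lambda$, in contrast with the DVR situation. The proposition only asserts conjugacy, not uniqueness, and your integral-closure heuristic (``every $\Lambda$-order is integral over $\Lambda$, hence contained in the integral closure $O_{D_\Lambda}$'') would, if it worked, prove uniqueness --- so it cannot be correct as stated. In the non-commutative setting the set of elements integral over a central subring need not form a ring, and there is no single ``integral closure'' containing all orders.

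Second, your conjugacy step relies on the assertion that ``since $O_{D_\Lambda}$ is local, every finitely generated torsion-free $O_{D_\Lambda}$-module is free.'' This is precisely what fails once the base has dimension two: already over the commutative ring $\Z_p[[x]]$ the maximal ideal $(p,x)$ is torsion-free but not free, and the analogous phenomenon persists for $O_D[[x]]$. Over a DVR torsion-free implies projective (hence free, being local), which is why Reiner's Chapter 17 arguments go through there; over a two-dimensional regular local ring one needs the homological input that Ramras supplies. You correctly flagged the two-dimensionality of $\Lambda$ as the obstacle, but the proposed fix does not get around it.
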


\begin{proof}
The proposition follows from  Ramras's work on maximal orders over regular local rings of dimension two. See Theorem 5.4 in \cite{MR0245572}. To verify the hypotheses of Ramras's theorem, we need to show that for each integer $m$, the matrix ring $M_m\left(O_{D_\Lambda}\right)$ is a semi-local ring with global dimension equal to two. Note that since $O_{D_\Lambda}$ is a local ring, the matrix ring $M_m\left(O_{D_\Lambda}\right)$ is semi-local (see 20.4 in Lam's book \cite{MR1838439}). Furthermore, the global dimension is a Morita invariant (see the Proposition in 3.5.10 in \cite{MR1811901}). Note that by Theorem 1.12 in Ramras's work \cite{MR0245572}, for a semi-local ring that is finitely generated over a commutative Noetherian ring, the left and right global dimensions coincide. By Lemma \ref{global-dim}, the global dimension of the matrix ring $M_m\left(O_{D_\Lambda}\right)$ is equal to two, for each integer n. The proposition follows.
\end{proof}

\begin{remark}
In our situation, we are considering maximal orders over the ring $\Lambda$, which is a complete regular local ring of dimension two.  One can manufacture (non-commutative) examples when $O_{D_\Lambda}$ is not the unique maximal $\Lambda$-order inside the division algebra $D_\Lambda$. Contrast this with the fact that $O_D$ is the unique maximal $\Z_p$-order inside the division algebra $D$.
\end{remark}

\subsection{Reduced Norms} \mbox{}

Let us first recall the definition of reduced norms in a general setting. Let $\mathcal{D}$ be a division algebra, finite dimensional over its center $\mathcal{F}$. Let $\mathcal{L}$ denote a maximal subfield of $\mathcal{D}$ containing $\mathcal{F}$. Let $d$ equal the vector space dimension $\dim_{\mathcal{F}} \mathcal{L}$. The field $\mathcal{L}$ is a splitting field for $\mathcal{D}$. That is, we have  $\mathcal{D} \otimes_\mathcal{F} \mathcal{L} \cong M_d(\mathcal{L})$. Consider the inclusion induced by the above isomorphism:
\begin{align}\label{inclusion-splitting-field}
\mathfrak{i} : \mathcal{D} \hookrightarrow  \mathcal{D} \otimes_\mathcal{F} \mathcal{L} \cong M_d(\mathcal{L})
\end{align}
 We have a group homomorphism $\mathrm{Nrd}: K_1(\mathcal{D}) \rightarrow \mathcal{F}^\times$ called the reduced norm map. To recall the definition of the reduced norm, let $\mathcal{A}$ denote a matrix in $\Gl_n(\mathcal{D})$. One can view the matrix $\mathfrak{i}(\mathcal{A})$ as an element of $M_{dn}(\mathcal{L})$. The reduced norm $\mathrm{Nrd}(\mathcal{A})$ is defined as  the determinant, over the commutative field $\mathcal{L}$, of the $dn \times dn$ matrix $\mathfrak{i}(\mathcal{A})$. One can show that this is an element of $\mathcal{F}^\times$. One can also show that the definition of the reduced norm is independent of the choice of the splitting field and the choice of the isomorphism $\mathcal{D} \otimes_\mathcal{F} \mathcal{L} \cong M_d(\mathcal{L})$.  See the description in Section 1.2.4 in Chapter III of Weibel's K-book \cite{MR3076731} for more details. If we let $f(t)$ denote the characteristic polynomial of the endomorphism $\mathcal{L}^{dn} \xrightarrow {\mathcal{A}} \mathcal{L}^{dn}$, of $\mathcal{L}$ vector-spaces, induced by the matrix $\mathfrak{i}(\mathcal{A})$, then one sees that $\mathrm{Nrd}(\mathcal{A})$ is also equal to the constant term of the polynomial $f(t)$. By abuse of notation, we will let $\mathrm{Nrd}$ also denote the following composition of maps $\mathcal{D}^\times \rightarrow K_1(\mathcal{D}) \xrightarrow {\mathrm{Nrd}} \mathcal{F}^\times $. \\

Concerning the properties of the restriction of the reduced norm of the division algebra to the subring $O_D$, see  Chapter 14 in Reiner's book \cite{MR0393100}. What we will need is the fact that $\mathrm{Nrd}(\pi_D)$ is a uniformizer in $O_F$. Let us denote $\mathrm{Nrd}(\pi_D)$ by $\pi_F$. \\

Now, we return to our setting. We have a natural inclusion of rings $R \hookrightarrow T$, where $R = M_m(O_{D_\Lambda})$ and $T=M_m(D_\Lambda)$. The set $\Gl_m(D_\Lambda) \cap M_m(O_{D_\Lambda})$, denoted by $S$ (say) is an Ore set inside $R$. We have a natural isomorphism $R_S \cong T$. Morita equivalence lets us obtain the natural isomorphism $$(R_S^*)^{ab} \overset{\det^{-1}}{\cong} K_1\left(M_m(D_\Lambda)\right) \underset{\mathrm{Morita}}{\cong} K_1(D_\Lambda) \overset{\det}{\cong} \left(D_\Lambda^*\right)^{ab}.$$

\begin{proposition} \label{image-det-max}
$\det(A)$ belongs to the image of the natural map
\begin{align*}
R \cap R_S^*  \rightarrow (R_S^*)^{ab}.
\end{align*}
of multiplicative monoids, where
\begin{align*}
R = M_m(O_{D_\Lambda}), \qquad R_S =M_m(D_\Lambda), \qquad A \in M_n(R) \cap \Gl_n(R_S), \qquad (R_S^*)^{ab} \cong (D_\Lambda^*)^{ab}.
\end{align*}
\end{proposition}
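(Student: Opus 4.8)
The plan is to reduce the statement about the matrix ring $M_m(O_{D_\Lambda})$ to a statement about $O_{D_\Lambda}$ itself via Morita equivalence, and then to exploit the fact that $O_{D_\Lambda}$ is a principal ideal domain (in the non-commutative sense) so that Proposition \ref{diagonal-reduction} applies. Concretely, given $A \in M_n(R) \cap \Gl_n(R_S)$ with $R = M_m(O_{D_\Lambda})$, I would first unwind the Morita identification: an $n \times n$ matrix over $M_m(O_{D_\Lambda})$ is literally an $mn \times mn$ matrix over $O_{D_\Lambda}$, and the composite isomorphism $(R_S^*)^{\ab} \cong K_1(M_m(D_\Lambda)) \cong K_1(D_\Lambda) \cong (D_\Lambda^*)^{\ab}$ recalled just before the proposition sends $\det(A)$ (the Dieudonn\'e determinant over $M_m(D_\Lambda)$) to the Dieudonn\'e determinant of $A$ viewed as an element of $\Gl_{mn}(D_\Lambda)$. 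So it suffices to prove the analogous statement for the pair $(O_{D_\Lambda}, D_\Lambda)$: every matrix in $M_N(O_{D_\Lambda}) \cap \Gl_N(D_\Lambda)$ has Dieudonn\'e determinant represented by an element of $O_{D_\Lambda}$.

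Next I would invoke the structure of $O_{D_\Lambda}$. By Lemma \ref{power-series}, $O_{D_\Lambda} \cong O_D[[x]]$, and by Lemma \ref{establish-division-ring}, $D_\Lambda$ is a division ring with $O_{D_\Lambda}$ a local ring (Lemma \ref{global-dim}). The key input I want is that $O_{D_\Lambda}$ is a (non-commutative) principal ideal domain. This should follow from the discrete valuation structure: $O_{D_\Lambda}$ is the valuation ring of the extension of the valuation $w$ on $D$ (composed with the $x$-adic consideration) — more carefully, one argues as in Reiner's Theorem 13.2, using that $O_{D_\Lambda}$ is a complete local ring, a maximal order over the two-dimensional regular local ring $\Lambda$... actually this is the delicate point, since $O_{D_\Lambda}$ is two-dimensional, not a DVR-type object, so it is \emph{not} literally a PID in the sense of Proposition \ref{diagonal-reduction}. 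I would instead argue more directly: a matrix $A \in M_N(O_{D_\Lambda}) \cap \Gl_N(D_\Lambda)$ can be brought, by row and column operations with elementary matrices, permutation matrices, and scalar matrices over $O_{D_\Lambda}$, into a form $\mathrm{diag}(u)B'$ where $u$ is a product of a power of $\pi_D$ and a power series, times a unit — using that $O_{D_\Lambda}$ is local (so any entry not in the maximal ideal is a unit and can be pivoted on) together with the Weierstrass preparation / division available in $O_D[[x]]$. Then by the characterizing properties (1) and (2) of the Dieudonn\'e determinant, $\det(A)$ equals the product of diagonal entries, which lies in $O_{D_\Lambda}$.

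The main obstacle is precisely the diagonal-reduction step over the two-dimensional ring $O_{D_\Lambda}$: unlike $O_D$, which is a non-commutative DVR and hence covered verbatim by Proposition \ref{diagonal-reduction}, the ring $O_{D_\Lambda} \cong O_D[[x]]$ has Krull dimension two, and there is no guarantee every matrix admits a clean diagonal reduction via elementary operations over it. I expect the resolution to proceed through one of two routes. The cleaner route: reduce to $O_D$ by first clearing denominators and localizing — but more to the point, show that it suffices to handle the case where $\det(A)$, as an element of $(D_\Lambda^*)^{\ab}$, is tracked through the reduced norm $\mathrm{Nrd}\colon (D_\Lambda^*)^{\ab} \to F_\Lambda^\times$ (which is an isomorphism by Theorem 2.3 in Oliver's book, applicable since $D_\Lambda$ is a $p$-adic-type division algebra), and then observe that $\mathrm{Nrd}(A) = \det_{L_\Lambda}(\mathfrak{i}(A))$ is an honest determinant over the commutative field $L_\Lambda$ of a matrix with entries in $O_{L_\Lambda} \cong O_L[[x]]$, hence lies in $O_{F_\Lambda}$; one must then check that the preimage under $\mathrm{Nrd}$ of an element of $O_{F_\Lambda}$ that actually arises as $\mathrm{Nrd}(A)$ for $A \in M_N(O_{D_\Lambda})$ is represented in $O_{D_\Lambda}$, which follows because $\mathrm{Nrd}\colon O_{D_\Lambda}^\times \to O_{F_\Lambda}^\times$ is surjective (tame-extension argument, as in Fr\"ohlich) and $\mathrm{Nrd}(\pi_D) = \pi_F$ is a uniformizer, so powers of $\pi_F$ are hit by powers of $\pi_D$ and the remaining unit part is adjusted. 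That surjectivity-of-reduced-norm computation, together with verifying it handles the non-unit (``$x$-divisible'') directions in the two-dimensional local ring, is where the real work lies.
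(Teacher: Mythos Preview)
Your proposal correctly identifies the main obstacle (that $O_{D_\Lambda}\cong O_D[[x]]$ is two-dimensional, so Proposition~\ref{diagonal-reduction} does not apply directly) and correctly isolates two of the essential ingredients: Weierstrass preparation in $O_D[[x]]$ and the integrality of the reduced norm $\mathrm{Nrd}(A)\in O_{F_\Lambda}$. But you do not find the structural move that makes the argument go through, and your ``cleaner route'' as written has a genuine gap.

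The paper does \emph{not} attempt to lift $\mathrm{Nrd}(A)$ back from $O_{F_\Lambda}$ to $O_{D_\Lambda}$ via surjectivity of $\mathrm{Nrd}$ on units. (Your appeal to Oliver's Theorem~2.3 and Fr\"ohlich's tame-norm argument is for genuine local fields; $F_\Lambda$ is the fraction field of a two-dimensional regular local ring, and you yourself flag the ``$x$-divisible'' direction as unresolved.) Instead the paper inverts $p$: because $\pi_D^{d'}$ differs from $p$ by a unit, the ring $O_{D_\Lambda}[\tfrac{1}{p}]$ has $\pi_D$ invertible, and then Weierstrass preparation (Venjakob) shows that $O_{D_\Lambda}[\tfrac{1}{p}]$ is a non-commutative PID. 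Proposition~\ref{diagonal-reduction} now applies over this localization, giving a diagonal reduction of $A$ and hence an equality $\det(A)=\beta_A\,J_A\,\pi_D^{r_A}$ in $K_1(D_\Lambda)$ with $\beta_A\in O_{D_\Lambda}^\times$, $J_A$ a monic polynomial, and $r_A\in\Z$. The only remaining issue is the sign of $r_A$, and \emph{that} is where the integrality of the reduced norm is used: since $\mathrm{Nrd}(A)=\mathrm{Nrd}(\beta_A)\,\mathrm{Nrd}(J_A)\,\pi_F^{dr_A}$ lies in the UFD $O_{F_\Lambda}$, and $\mathrm{Nrd}(J_A)$ is monic (hence not divisible by $\pi_F$), one gets $r_A\geq 0$. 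So the reduced norm enters only as a one-sided bound on a single integer exponent, not as a surjectivity statement to be inverted.

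In short: your diagnosis is right and your toolbox is nearly complete, but the missing idea is to localize away from $p$ to recover a PID, perform the diagonal reduction there, and use $\mathrm{Nrd}(A)\in O_{F_\Lambda}$ only to control the resulting power of $\pi_D$.
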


\begin{proof}
We proceed in several steps.

\underline{Step 1: The reduced norm of $A$ is integral}
\nopagebreak

We will show that $\mathrm{Nrd}(A)$ belongs to $O_{F_\Lambda}$. Let $d$ denote $\dim_{F_\Lambda}L_\Lambda$. We will fix an inclusion $\mathfrak{i}:D_\Lambda \hookrightarrow M_{d}(L_\Lambda)$ as in (\ref{inclusion-splitting-field}). Let $f(t)$ denote the characteristic polynomial (this is the reduced characteristic polynomial associated to the central simple algebra $M_{mn}(D_\Lambda)$ over $F_\Lambda$) of the endomorphism $L_\Lambda^{dmn} \xrightarrow {\mathfrak{i}(A)} L_\Lambda^{dmn}$, induced by the matrix $\mathfrak{i}(A)$. Note that $f(t)$ is an element of the polynomial ring $F_\Lambda[t]$. See Theorem 9.3 in Reiner's book \cite{MR0393100}. Note that $D_\Lambda$ is a vector space of dimension $d^2$ over $F_\Lambda$. Let $g(t)$ denote the characteristic polynomial of the endomorphism $F^{d^2mn}_{\Lambda} \xrightarrow {\alpha_A} F^{d^2mn}_{\Lambda}$, induced by the matrix $A$. Since the entries of the matrix $A$ lie in $O_{D_\Lambda}$, we have the following commutative diagram of $O_{F_\Lambda}$-modules:
\begin{align} \label{comm-diag-integral-endo}
\xymatrix{\left(O_{D_\Lambda} \right)^{mn} \ar[d]^{\cong} \ar[r]^{\alpha_A} & \left(O_{D_\Lambda} \right)^{mn} \ar[d]^{\cong} \\ O_{F_\Lambda}^{d^2mn} \ar[r]^{\alpha_A} & O_{F_\Lambda}^{d^2mn}}
\end{align}
The endomorphism $F^{d^2mn}_{\Lambda} \xrightarrow {A} F^{d^2mn}_{\Lambda}$ is induced by the $O_{F_\Lambda}$-module endomorphism given in (either row of) the commutative diagram in (\ref{comm-diag-integral-endo}). Thus, the polynomial $g(t)$  must belong to $O_{F_\Lambda}[t]$.  By Theorem 9.5 in Reiner's book \cite{MR0393100}, $f(t)$ divides $g(t)$, in $F_\Lambda[t]$. Note that the domain $O_{F_\Lambda}[t]$ is integrally closed. Note also that both $f(t)$ and $g(t)$ are monic polynomials. Since the coefficients of $g(t)$ are in $O_{F_\Lambda}$, so must the coefficients of $f(t)$. See Proposition 4.11 in Eisenbud's book \cite{eisenbud1995commutative}. Hence, $\mathrm{Nrd}(A)$ must lie in $O_{F_\Lambda}$. \\

\underline{Step 2: A non-commutative Weierstrass preparation theorem over $O_{D}[x]]$}
\nopagebreak

For the rest of the proof, we shall fix an isomorphism
\begin{align} \label{iso-powerseries}
O_{D_\Lambda} \cong O_D[[x]].
\end{align}
To each element $f =\sum_{n=0}^\infty a_n(f) x^n$ in $O_D[[x]]$, we can define a quantity called the reduced order of $f$, denoted $\mathrm{ord}(f)$, as follows:
\begin{align*}
\mathrm{ord}(f) = \min\left\{n \mid a_n \in O_D^\times\right\}.
\end{align*}
We set $\mathrm{ord}(f)$ to be $\infty$, if the set $\left\{n  \mid a_n \in O_D^\times\right\}$ is empty. Every non-zero element $f$ in $O_{D}[[x]]$ can be written as $\pi_D^{\mu} f_0$, where $f_0$ is some power series in $O_D[[x]]$ such that $\mathrm{ord}(f_0) < \infty$.

Just as in the commutative case, we have a Weierstrass preparation theorem over $O_D[[x]]$ too.
Firstly, let $f_1$ and $f_2$ be two elements in $O_D[[x]]$ such that $\mathrm{ord}(f_2) < \infty$. Then, there exists elements $a,b,r,s$ in $O_D[[x]]$ such that
\begin{align*}
f_1 = af_2 + r, \qquad f_1 = f_2b +s,
\end{align*}
and such that both $r$ and $s$ are polynomials whose degrees are less than $\mathrm{ord}(f_2)$.

Secondly, every element $f$ in $O_{D}[[x]]$ can be written as
\begin{align*}
f = \pi_D^{\mu_f} U_f J_f, \qquad \text{ where }U_f \in O_{D}[[x]]^\times, \ J_f \text{ is a monic polynomial under the isomorphism in (\ref{iso-powerseries})},
\end{align*}
and
\begin{align*}
f =  H_f V_f \pi_D^{\mu_f}, \qquad \text{ where } V_f \in O_{D}[[x]]^\times, \ H_f \text{ is a monic polynomial under the isomorphism in (\ref{iso-powerseries})}.
\end{align*}
These facts follow from the work of Venjakob \cite{MR1989649}. See Theorem 3.1 and Corollary 3.2 in \cite{MR1989649}. \\

\underline{Step 3: $O_{D_\Lambda}\left[\frac{1}{p}\right]$ is a non-commutative PID}
\nopagebreak

Theorem 14.3 in Reiner's book \cite{MR0393100} tells us that $u_0\pi_D^{d} = \pi_F$, for some  $u_0 \in O_D^\times$. As a result, there exists a positive integer $d'$ so that
\begin{align}\label{ppid}
u_1\pi_D^{d'} = p, \text{ where } u_1 \in O_D^\times.
\end{align}
Since $p$ is invertible in $O_{D_\Lambda}\left[\frac{1}{p}\right]$, so is $\pi_D$. This observation along with the Weierstrass preparation theorem allows us to conclude that $O_{D_\Lambda}\left[\frac{1}{p}\right]$ is a non-commutative PID. To see this: for each non-zero left (right) ideal $I$ in $O_{D_\Lambda}\left[\frac{1}{p}\right]$, choose a monic polynomial $f$ in $I$ with least reduced order. A standard application of the Weierstrass preparation theorem, just as in the commutative case, will show us that this element $f$ is a generator for the left (right) ideal $I$.

We would like to make two further useful observations:
\begin{enumerate}
\item If $J$ is a monic polynomial of degree $r$, then $\mathrm{Nrd}(J)$ is a monic polynomial of degree $rd$.
\item Every unit in the ring $O_{D_\Lambda}\left[\frac{1}{p}\right]$ is of the form $\pi_D^{r}\beta$, for some integer $r$ and some $\beta$ in $O_{D_\Lambda}^\times$.
\end{enumerate}

\underline{Step 4: Diagonal reduction over $O_{D_\Lambda}\left[\frac{1}{p}\right]$:}
\nopagebreak

Proposition \ref{diagonal-reduction} tells us that the matrix $A$ admits a diagonal reduction via elementary operations in $M_{mn}\left(O_{D_\Lambda}\left[\frac{1}{p}\right]\right)$. So, there exists a diagonal matrix $B$ in $M_{mn}\left(O_{D_\Lambda}\left[\frac{1}{p}\right]\right)$ and invertible matrices $U$ and $V$ (obtained as products of elementary matrices, permutation matrices and scalar matrices in $\Gl_{mn}\left(O_{D_\Lambda}\left[\frac{1}{p}\right]\right)$) so that $A = UBV$. This allows us to obtain following equality in $K_1(D_\Lambda)$:
\begin{align*}
\det(U) = \pi_D^{r_U} \beta_U , \qquad \det(V) = \pi_D^{r_V} \beta_V,
\end{align*}
where $r_U$ and $r_V$ are integers while $\beta_U$ and $\beta_V$ are elements of $O_{D_\Lambda}^\times$. Since $B$ is a diagonal matrix, by multiplying all the elements in the main diagonal of $B$, we obtain the following equality in $K_1(D_\Lambda)$:
\begin{align*}
\det(B) = \pi^{r_B}_D \beta_B J_B,
\end{align*}
where $r_B$ is an integer, $\beta_B$ is an element of $O_{D_\Lambda}^\times$ and $J_B$ is a monic polynomial in $O_{D_\Lambda}$ (under the isomorphism $O_{D_\Lambda} \cong O_D[[x]]$ given in (\ref{iso-powerseries})).
Set
\begin{align*}
J_A:=J_B \in O_D[x], \qquad r_A:= r_U + r_B + r_V \in \Z, \qquad \beta_A := \beta_U \beta_B \beta_V \in O_{D_\Lambda}^\times.
\end{align*}

\underline{Step 5: Completing the proof:}
\nopagebreak

Set \begin{align*}
C := \left[\begin{array} {cccc} \beta_A J_A\pi_D^{r_A}  & 0 & \ldots & 0 \\ 0 & 1 & \ldots & 0 \\ \vdots & & \ddots &  \\ 0 & 0 & \ldots & 1 \end{array}\right]\in M_m\left(O_{D_\Lambda}\left[\frac{1}{p}\right]\right).
\end{align*}
Since $K_1(D_\Lambda)$ is an abelian group, we have the following equality in $K_1(D_\Lambda)$:
\begin{align*}
\det(A) & = \det(U) \det(B) \det(V) \\ & =\pi_D^{r_U} \beta_U \cdot \pi^{r_B}_D \beta_B J_B \cdot \pi_D^{r_V} \beta_V \\ & = \beta_U \beta_B \beta_V \cdot J_B \cdot \pi_D^{r_U + r_B + r_V} \\ &= \beta_A J_A\pi_D^{r_A} = \det(C).
\end{align*}
Now, to complete the proof of the proposition, we will show that $r_A$ is non-negative. This would tell us that $C$ is a matrix in $M_m\left(O_{D_\Lambda}\right)$ and that $C$ is a representative for $\det(A)$ in $M_m\left(O_{D_\Lambda}\right)$.

Computing reduced norms, we obtain the following equality in $F_\Lambda$:
\begin{align*}
 \mathrm{Nrd}(A) &= \mathrm{Nrd}(\det(A)) = \mathrm{Nrd}(\beta_A J_A\pi_D^{r_A}) \\
\implies  \mathrm{Nrd}(A) &= \mathrm{Nrd}(\beta_A)  \cdot \mathrm{Nrd}(J_A) \cdot \pi_F^{dr_A}.
\end{align*}
We have shown that $\mathrm{Nrd}(A)$ is an element of $O_{F_\Lambda}$. So, $ \mathrm{Nrd}{\beta_A}  \cdot \mathrm{Nrd}(J_A) \cdot \pi_F^{dr_A}$ must belong to the unique factorization domain $O_{F_\Lambda}$ as well. Since $\beta_A$ is a unit in the ring $O_{D_\Lambda}$, the element $\mathrm{Nrd}(\beta_A)$ is a unit in $O_{F_\Lambda}$. This follows from Theorem 10.1 in Reiner's book \cite{MR0393100} and the fact the reduced norm is a group homomorphism. The irreducible $\pi_F$ cannot divide the monic polynomial $\mathrm{Nrd}(J_A)$. As a result, $dr_A$ must be non-negative and hence, so must the integer $r_A$. This completes the proof of the proposition.
\end{proof}

\section{The non-commutative Iwasawa main conjecture and Proof of Theorem \ref{integrality-determinant}} \label{section-non-commutative}

We will readily borrow the terminologies used in Weibel's K-book \cite{MR3076731}, the work of Fukaya-Kato \cite{MR2276851} and Section 2 of Kakde's work \cite{MR3091976}  to describe various objects appearing in the non-commutative Iwasawa main conjecture. The ``canonical'' Ore sets $\mathfrak{S}$ and $\mathfrak{S}^*$, that come into play, are given below:

\begin{align*}
\mathfrak{S} &= \left\{ s \in \Lambda[G],\text{ such that }  \frac{\Lambda[G]}{\Lambda[G]s} \text{ is a finitely generated $\Z_p$-module} \right\}, \quad
\mathfrak{S}^* &= \bigcup \limits_{n \geq 0} p^n \mathfrak{S}, \quad S^* = \Lambda \setminus \{0\}.
\end{align*}

In this article, we will only consider the localization $\Lambda[G]_{\mathfrak{S}^*}$. The set $\mathfrak{S}^*$ is a multiplicatively closed set, consisting of non-zero divisors in $\Lambda[G]$. Since the group $G$ is finite, we have the isomorphisms
\begin{align*}
\Lambda[G]_{\mathfrak{S}^*} \cong \Lambda[G]_{S^*}\cong Q_\Lambda[G].
\end{align*}
It will be advantageous to work with $S^*$ since all the elements of $S^*$ are central. To formulate the main conjecture, we will have to consider the connecting homomorphism (obtained from the localization sequence in $K$-theory):
\begin{align*}
\partial: K_1\left(Q_\Lambda[G] \right) \rightarrow  K_0\left(\Lambda[G], Q_\Lambda[G] \right).
\end{align*}

\subsection{Interpolation properties of the $p$-adic $L$-functions} \label{subsec:inter_prop} \mbox{}

To describe the interpolation properties of $p$-adic $L$-functions, we will follow the illustrations provided in the works of Johnston-Nickel \cite[Section 4.3]{johnstonhybrid} and Ritter-Weiss \cite[Section 4]{MR2114937}.

Let $Z(Q_\Lambda[G])$ denote the center of $Q_\Lambda[G]$. Corresponding to each Weddernburn component $M_{m_i}(D_i)$ of $\Q_p[G]$ appearing in equation (\ref{AW-groupring}), we let $F_i$ denote the center $Z(D_i)$ and $n_i$ denote $\mathrm{dim}_{F_i}(D_i)$. Note that $F_i$ is a finite extension of $\Q_p$. Let $O_{F_i}$ denote the ring of integers of $F_i$. Note also that every simple (left) module of $M_{m_i}(D_i)$ is isomorphic to the simple  module $D_i^{m_i}$ with the natural (left) action of the matrix ring $M_{m_i}(D_i)$. As a result, each Weddernburn component $M_{m_i}(D_i)$ corresponds uniquely to an irreducible (totally even) Artin representation $\rho_i: \Gal{L}{K} \rightarrow \Gl_{n_im_i}(F_i)$, given as follows:
\begin{align*}
\rho_i: \underbrace{\Gal{L}{K}}_{G} \hookrightarrow \Q_p[G]^\times \twoheadrightarrow \GL_{m_i}(D_i) \cong \Aut \left(D_i^{m_i}\right) \cong \Gl_{m_in_i}(F_i).
\end{align*}
To describe the local Euler factors at primes $\nu \in \Sigma_0$, we will follow the illustration provided in work of Greenberg-Vatsal \cite[Proposition 2.4]{greenberg2000iwasawa}. Suppose $F$ denotes a finite extension of $\Q_p$. Suppose $\rho:G \rightarrow \GL_n(F)$ denotes an Artin representation. Let $V$ and $V^*:=\Hom_F\left(V,F(1)\right)$ denote the $F[G]$-modules corresponding to $\rho$ and its Tate dual $\rho^*$ respectively. For each $\nu$ in $\Sigma_0$, we consider $P_{\nu,\rho}(x):=\det\left(1-x \rho^*\mid_{V^*_{I_\nu}}(\Frob^{-1}_\nu)\right)$ in $F[x]$. Here, $V^*_{I_\nu}$ denotes the maximal quotient of $V^*$ on which the inertia group $I_\nu$ acts trivially.  Here, $I_\nu$ denotes the inertia subgroup inside $\Gal{\overline{K}_\nu}{K_\nu}$ and $\Frob_\nu$ is the Frobenius element in $\frac{\Gal{\overline{K}_\nu}{K_\nu}}{I_\nu}$ . Let $\Gamma_\nu$ denote a decomposition group corresponding to $\nu$ inside $\Gamma$. We can naturally view $\gamma_\nu$, the Frobenius automorphism at $\nu$ of $\Gamma_\nu$, as an element of $\Gamma$ via the inclusion $\Gamma_\nu \subset \Gamma$. We will let $f_{\nu,\rho}$ denote $P_{\nu,\rho}(\gamma_\nu)$, viewed as an element of $\mathrm{Frac}(O_F[[\Gamma]])$.

Suppose first that the character $\chi$ is totally even. For each $i$, note that $\chi \rho_i$ then is a totally even Artin representation of $K$ of ``type $S$''. Greenberg \cite[Section 2]{MR692344} has constructed a primitive $p$-adic $L$-function $L_{p,\chi \rho_i}$  as an element of $\mathrm{Frac}\left(O_{F_i}[[\Gamma]]\right)$. In this case, we let
\begin{align*}
\Phi^{\Sigma_0}_{\chi} := \bigg(L_{p,\chi \rho_i} \prod_{\nu \in \Sigma_0} f_{\nu,\chi\rho_i}\bigg)_i \qquad \text{in } Z(Q_\Lambda[G]).
\end{align*}

Suppose now that the character $\chi$ is totally odd. For each $i$, note that $\chi^{-1}\omega\rho_i^{-1}$ then is a totally even Artin representation of $K$ of ``type $S$''. Let $\iota: \Z_p[[\Gamma]] \rightarrow \Z_p[[\Gamma]]$ denote the $\Z_p$-linear ring homomorphism induced by sending $\gamma \rightarrow \gamma^{-1}<\gamma>$, for each $\gamma$~in~$\Gamma$. Here, we obtain the element $<\gamma>$ via the canonical injection $<>:\Gamma \hookrightarrow \Gal{\Q_\mathrm{cyc}}{\Q} \xrightarrow {\cong} 1+p\Z_p$. In this case, we let
\begin{align*}
\Phi^{\Sigma_0}_{\chi} := \bigg(\iota\left(L_{p,\chi^{-1}\omega \rho_i^{-1}}\right) \prod_{\nu \in \Sigma_0} f_{\nu,\chi\rho_i} \bigg)_i \qquad \text{in } Z(Q_\Lambda[G]).
\end{align*}

\begin{comment}
\end{comment}

\begin{conjecture}[Interpolation property for the $p$-adic $L$-function]\label{conj:int_prop}\mbox{}

There exists a unique element $\xi$ in $K_1\left(Q_\Lambda[G]\right)$ such that $\mathrm{Nrd}(\xi)=\Phi^{\Sigma_0}_{\chi}$ in $Z(Q_\Lambda[G])$.
\end{conjecture}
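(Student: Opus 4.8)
The plan is to prove the two assertions of Conjecture~\ref{conj:int_prop} --- existence and uniqueness of $\xi$ --- separately, since they rest on rather different inputs. For uniqueness I would first observe that two elements of $K_1(Q_\Lambda[G])$ with the same reduced norm differ by an element of the reduced Whitehead group $\mathrm{SK}_1(Q_\Lambda[G]) = \ker\big(\mathrm{Nrd} : K_1(Q_\Lambda[G]) \to Z(Q_\Lambda[G])^\times\big)$, so uniqueness is \emph{equivalent} to the vanishing of this group. Using the Artin--Wedderburn decomposition (\ref{artin-wedderburn}), together with the facts that $\mathrm{SK}_1$ is compatible with finite products and Morita-invariant, this reduces to showing $\mathrm{SK}_1(D_{\Lambda,i}) = 0$ for each $i$. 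By the analysis of Section~\ref{sec:maxorder} each $D_{\Lambda,i}$ has the form $D_i \otimes_{\Q_p} Q_\Lambda$ with $D_i$ a finite-dimensional division algebra over $\Q_p$; the Nakayama--Matsushima theorem gives $\mathrm{SK}_1(D_i) = 0$, and since $Q_\Lambda$ is purely transcendental over $\Q_p$, Platonov's stability theorem propagates this to $D_{\Lambda,i}$. This settles uniqueness unconditionally.

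For existence, the point is to show that the central element $\Phi^{\Sigma_0}_\chi \in Z(Q_\Lambda[G]) = \prod_i Z(D_{\Lambda,i})$ lies in the image of $\mathrm{Nrd}$, i.e.\ that each component lies in $\mathrm{Nrd}(D_{\Lambda,i}^\times) \subseteq Z(D_{\Lambda,i})^\times$. I would split according to the parity of $\chi$. When $\chi$ is totally even, each $\chi\rho_i$ is a totally even Artin representation of type $S$, and the existence of a $\xi$ with $\mathrm{Nrd}(\xi)$ having $i$-th component $L_{p,\chi\rho_i}\prod_{\nu\in\Sigma_0} f_{\nu,\chi\rho_i}$ is precisely what the non-commutative main conjecture, as proved by Ritter--Weiss and by Kakde under Iwasawa's $\mu=0$ hypothesis (and unconditionally in the situations treated by Johnston--Nickel), delivers: their construction produces $\xi$ as a compatible system of abelian $p$-adic $L$-functions satisfying Deligne--Ribet-type congruences, and the matching of reduced norms with the $L_{p,\chi\rho_i}$ is built into the interpolation at Artin characters. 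When $\chi$ is totally odd, I would transfer to the even side via the reflection homomorphism $\iota$: the representations $\chi^{-1}\omega\rho_i^{-1}$ are totally even of type $S$, so the even case supplies the abelian $p$-adic $L$-functions $L_{p,\chi^{-1}\omega\rho_i^{-1}}$, and applying $\iota$ componentwise (noting that $\iota$, being an automorphism of $\Lambda$ fixing $G$ and $\Q_p$, respects the Wedderburn decomposition and commutes with $\mathrm{Nrd}$) recovers the components $\iota(L_{p,\chi^{-1}\omega\rho_i^{-1}})$ of $\Phi^{\Sigma_0}_\chi$ as reduced norms.

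It remains to absorb the auxiliary Euler factors $f_{\nu,\rho} = P_{\nu,\rho}(\gamma_\nu)$ for $\nu \in \Sigma_0$. Since $P_{\nu,\rho}(x) = \det\big(1 - x\,\rho^*|_{V^*_{I_\nu}}(\mathrm{Frob}_\nu^{-1})\big)$ is a determinant over a commutative field attached functorially to the representation, these factors assemble into a central element of $Q_\Lambda[G]$ that is visibly the reduced norm of the corresponding element of $K_1(Q_\Lambda[G])$ coming from the local term, so multiplying through does not affect membership in the image of $\mathrm{Nrd}$. The main obstacle is therefore the existence statement in its core: showing $L_{p,\chi\rho_i} \in \mathrm{Nrd}(D_{\Lambda,i}^\times)$ for every $i$. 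By a Hasse--Schilling--Maass-type description of the image of the reduced norm this reduces to a positivity/local-sign condition on $L_{p,\chi\rho_i}$ at each Wedderburn component --- a condition dictated by the arithmetic of the interpolated $L$-values --- but verifying it in general is exactly the content of the non-commutative main conjecture and, without assuming $\mu=0$, is only partially known. Accordingly I would keep the existence half conditional (as the authors do) while regarding the uniqueness half as unconditional.
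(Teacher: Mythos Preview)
The statement you are attempting to prove is not a theorem in the paper but a \emph{conjecture}: the paper gives no proof of Conjecture~\ref{conj:int_prop}, and in fact Conjecture~\ref{conj:noncomm} (the non-commutative main conjecture) explicitly assumes it as a hypothesis. So there is no ``paper's own proof'' to compare against. What the paper does contain is a remark (in the introduction, just after Question~\ref{INT}) establishing the \emph{uniqueness} half unconditionally, and your argument for uniqueness is essentially identical to that remark: reduce via Artin--Wedderburn and Morita invariance to $\mathrm{SK}_1(D_{\Lambda,i})=0$, then invoke Nakayama--Matsushima for $\mathrm{SK}_1(D_i)=0$ together with Platonov's stability theorem over the purely transcendental extension $Q_\Lambda/\Q_p$. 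That part of your proposal is correct and matches the paper exactly.

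For existence, your discussion is a reasonable survey of what is known (Ritter--Weiss and Kakde under $\mu=0$, Johnston--Nickel in certain cases), and you rightly keep it conditional. But note that you have not actually given a proof even under those hypotheses: the assertion that the components $L_{p,\chi\rho_i}$ lie in $\mathrm{Nrd}(D_{\Lambda,i}^\times)$ is precisely the hard content of those works, and your appeal to a ``Hasse--Schilling--Maass-type description'' is not quite on target, since $F_{\Lambda,i}$ is a function field over a $p$-adic field rather than a number field or local field, so the classical form of that theorem does not directly apply. The paper sidesteps all of this by simply stating existence as a conjecture and assuming it (via Conjecture~\ref{conj:noncomm}) wherever needed. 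In short: your uniqueness argument is fine and agrees with the paper; your existence discussion is informative context but not a proof, and the paper makes no claim to prove existence either.
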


\begin{remark}
The fact that the primitive $p$-adic $L$-function for a totally odd Artin representation $\rho$ is related to the primitive $p$-adic $L$-function for the corresponding totally even Artin representation $\rho^{-1}\omega$ is also mirrored on the algebraic side. In the commutative setting, this is the ``reflection principle''. See works of Greenberg \cite[Section 2]{MR0444614} and \cite[Theorem 2]{greenberg1989iwasawa}. The analog of the reflection principle in the non-commutative setting is discussed, from both the algebraic and analytic perspective, in the work of Fukaya-Kato \cite[Section 4.4]{MR2276851}.
\end{remark}

\subsection{The non commutative Iwasawa main conjecture}

\mbox{}

On the algebraic side of Iwasawa theory, one works with the relative $K_0$-group $K_0\left(\Lambda[G], Q_\Lambda[G]\right)$. One can give two different descriptions of this relative $K_0$-group. The first description involves the exact category $\mathbf{H}_{1,S^*}$. The category $\mathbf{H}_{1,S^*}$ is a subcategory of the category of finitely generated left $\Lambda[G]$-modules. The objects of $\mathbf{H}_{1,S^*}$ are $\Lambda[G]$-modules that are $S^*$-torsion and that have projective dimension less than or equal to one. To define the relative $K_0$-group $K_0\left(\Lambda[G], Q_\Lambda[G]\right)$, we refer the reader to Definition 2.10 and Exercise 7.11 in Chapter II of Weibel's K-book \cite{MR3076731}. We will need to consider tuples $(P_1,\alpha,P_2)$, where
\begin{itemize}
\item $P_1$ and $P_2$ are projective $\Lambda[G]$-modules, and
\item the map $\alpha:Q_\Lambda[G]\otimes_{\Lambda[G]} P_1 \rightarrow Q_\Lambda[G] \otimes_{\Lambda[G]} P_2$ is an isomorphism of $Q_\Lambda[G]$-modules.
\end{itemize}
The relative $K_0$-group $K_0\left(\Lambda[G], Q_\Lambda[G]\right)$ is defined to be the quotient of the free abelian group generated by such tuples $(P_1,\alpha,P_2)$ subject to the following two relations:
\begin{enumerate}[(i)]
\item $[(P_1,\alpha_1,Q_1)] + [(P_3,\alpha_2,Q_3)]= [(P_2,\alpha_2 \circ \alpha_1,Q_2)]$, whenever we have two exact sequences of projective $\Lambda[G]$-modules
\begin{align*}
0 \rightarrow P_1 \rightarrow P_2 \rightarrow P_3 \rightarrow 0 ,\qquad 0 \rightarrow Q_1 \rightarrow Q_2 \rightarrow Q_3 \rightarrow 0,
\end{align*}
along with an induced commutative diagram of $Q_\Lambda[G]$-modules with exact rows:
\begin{align*}
\xymatrix{
0 \ar[r]& Q_\Lambda[G]\otimes_{\Lambda[G]} P_1 \ar[d]^{\alpha_1}_\cong \ar[r]& Q_\Lambda[G]\otimes_{\Lambda[G]} P_2 \ar[r]\ar[d]^{\alpha_2}_\cong & Q_\Lambda[G]\otimes_{\Lambda[G]} P_3 \ar[d]^{\alpha_3}_\cong \ar[r]&0 \\
0 \ar[r]& Q_\Lambda[G]\otimes_{\Lambda[G]} Q_1 \ar[r]& Q_\Lambda[G]\otimes_{\Lambda[G]} Q_2 \ar[r]& Q_\Lambda[G]\otimes_{\Lambda[G]} Q_3 \ar[r]&0
}
\end{align*}
\item $[(P_1,\alpha_{21},P_2)] + [(P_2,\alpha_{32},P_3)]= [(P_1,\alpha_{32} \circ \alpha_{21},P_3)]$.
\end{enumerate}

One can give a second description of this relative $K_0$-group involving the Waldhausen category $\mathbf{Ch}_{S^*}^\flat\big(\mathbf{P}(\Lambda[G])\big)$. This is the category of bounded chain complexes of finitely generated projective $\Lambda[G]$-modules whose cohomologies are $S^*$-torsion. In \cite{MR2276851}, Fukaya and Kato use the second description of this relative $K_0$-group to formulate the non-commutative Iwasawa main conjecture.  Fukaya and Kato construct an element  of this category $\mathbf{Ch}_{S^*}^\flat\big(\mathbf{P}(\Lambda[G])\big)$, whose cohomology is closely related to $\X$. Fukaya and Kato  label this chain complex $\mathrm{SC}\left(U,T,T^0\right)$. We will follow their notations to describe this chain complex.

If $\chi$ is totally even, we have
\begin{align*}
U = \Sigma_0, \quad T = \Z_p(\chi^{-1}\chi_p), \qquad T^0 = 0,
\end{align*}
and the cohomology of the chain complex $\mathrm{SC}\left(U,T,T^0\right)$ is given below:
\begin{align} \label{eq:desc_complex_even}
H^i\bigg(\mathrm{SC}\left(U,T,T^0\right)\bigg) = \left\{ \begin{array}{cc} \Z_p, & \text{if $i =3$ and $\chi$ is trivial,} \\ \X , & \text{if $i=2$,} \\ 0, & \text{otherwise}. \end{array} \right.
\end{align}
Here, $\chi_p:G_\Sigma \rightarrow \Z_p^\times$ denotes the $p$-adic cyclotomic character given by the action of $G_\Sigma$ on the $p$-power roots of unity $\mu_{p^\infty}$.

If $\chi$ is totally odd, we have
\begin{align*}
U = \Sigma_0, \quad T = \Z_p(\chi^{-1}\chi_p), \qquad T^0 =  \Z_p(\chi^{-1}\chi_p).
\end{align*}
As for the cohomology of the chain complex when $\chi$ is totally odd, we have $H^i\bigg(\mathrm{SC}\left(U,T,T^0\right)\bigg)=0$, if $i \neq 2$. We also have the following exact sequence:
\begin{align} \label{complexes-iwasawa-module}
0 \rightarrow  \bigoplus \limits_{\omega \in \Sigma_p(L_\infty)} H^1\left(\Gamma_\omega, H^0\left(I_\omega, \frac{\Q_p(\chi)}{\Z_p(\chi)}\right)\right)^\vee \rightarrow & \X \rightarrow H^2\bigg(\mathrm{SC}\left(U,T,T^0\right)\bigg) \rightarrow  \\ & \notag \rightarrow \bigoplus \limits_{\omega \in \Sigma_p(L_\infty)}  H^0\left(G_\omega, \frac{\Q_p(\chi)}{\Z_p(\chi)}\right)^\vee \rightarrow 0.
\end{align}

\begin{remark}
A word of caution about the terminology in \cite{MR2276851}: the module labeled $\mathcal{X}(T,T^0)$ in Fukaya and Kato's work \cite{MR2276851} is the Pontryagin dual of the ``strict'' Selmer group.
\end{remark}

\begin{remark}\label{odd-remark}
There is a nice illustration on how to compute the cohomology of the chain complex $\mathrm{SC}\left(U,T,T^0\right)$ in Examples 4.5.1 and 4.5.2 of Fukaya and Kato's work \cite{MR2276851}. We have mainly followed those illustrations. See Section 2.3 in Kakde's work for the description of the cohomology of the chain complex $\mathrm{SC}(U,T,T^0)$ when the character $\chi$ is totally even.

When $\chi$ is totally odd, we will need to use the description of $\mathrm{SC}(U,T,T^0)$ given in equation (4.1) in Section 4.1.2 of Fukaya and Kato's work \cite{MR2276851}. The illustration given in the proof of Proposition 4.2.35 in \cite{MR2276851} is  helpful for this computation. The fact that $H^1\bigg(\mathrm{SC}\left(U,T,T^0\right)\bigg)=0$ crucially relies on the observation that the global-local map defining the non-primitive ``strict'' Selmer group is surjective.
\end{remark}

For the definition of $K_0\left(\mathbf{Ch}_{S^*}^\flat\big(\mathbf{P}(\Lambda[G])\big)\right)$, we refer the reader to Definition 9.1.2 in Chapter II of Weibel's K-book \cite{MR3076731}. For our purposes, we will simply keep in mind that $K_0\left(\mathbf{Ch}_{S^*}^\flat\big(\mathbf{P}(\Lambda[G])\big)\right)$ is a certain quotient of the free abelian group generated by the objects of $\mathbf{Ch}_{S^*}^\flat\big(\mathbf{P}(\Lambda[G])\big)$. Using the second description of the relative $K_0$-group involving $\mathbf{Ch}_{S^*}^\flat\big(\mathbf{P}(\Lambda[G])\big)$ allows us to consider the element $\left[\mathrm{SC}\left(U,T,T^0\right)\right]$.

We will follow the formulation of the non-commutative Iwasawa main conjecture given in work of Johnston-Nickel \cite[Conjecture 4.4]{johnstonhybrid}. See also work of Fukaya-Kato \cite{MR2276851} and Ritter-Weiss \cite{MR2114937}.

\begin{conjecture} \label{conj:noncomm}
Conjecture \ref{conj:int_prop} holds. Furthermore, we have the following equality in $K_0\left(\Lambda[G], Q_\Lambda[G]\right)$:
\begin{align}
 \partial(\xi) \stackrel{?}{=} \left[\mathrm{SC}\left(U,T,T^0\right)\right].
\end{align}
\end{conjecture}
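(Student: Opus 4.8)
\textbf{Plan of proof for Theorem \ref{integrality-determinant}.}
The strategy is to transport the identity $\partial(\xi)=[\mathrm{SC}(U,T,T^0)]$, supplied by Conjecture \ref{conj:noncomm}, from the pair $(\Lambda[G],Q_\Lambda[G])$ to the pair $(M_{\Lambda[G]},Q_\Lambda[G])$, and then to exploit the fact established in Section \ref{sec:maxorder} that $M_{\Lambda[G]}$ is a product of matrix rings $M_{m_i}(O_{D_{\Lambda,i}})$ over the local rings $O_{D_{\Lambda,i}}$, each of global dimension two (Lemma \ref{global-dim}, Proposition \ref{establish-max-order}). First I would use the hypothesis that $\X$ has a free resolution of length one: since $\chi$ is non-trivial, the cohomology description in \eqref{eq:desc_complex_even} (even case) and the exact sequence \eqref{complexes-iwasawa-module} (odd case) identify the complex $\mathrm{SC}(U,T,T^0)$, up to quasi-isomorphism, with a two-term complex of free $\Lambda[G]$-modules $[\Lambda[G]^a \xrightarrow{\Psi} \Lambda[G]^a]$ whose cohomology in degree $2$ is $\X$ (and is zero elsewhere, because when $\chi$ is non-trivial the $H^3=\Z_p$ term disappears and the auxiliary prime $\nu_0$ forces the local terms in \eqref{complexes-iwasawa-module} to behave so that $\X$ itself has projective dimension $\le 1$). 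Here $\Psi\in M_a(\Lambda[G])\cap\Gl_a(Q_\Lambda[G])$ is a square matrix, and by the definition of the relative $K_0$-group together with the localization sequence, $\partial(\xi)=[\mathrm{SC}(U,T,T^0)]$ forces $\xi$ and $\det(\Psi)$ to have the same image in $K_1(Q_\Lambda[G])$ modulo $\mathrm{SK}_1(Q_\Lambda[G])=0$ (the vanishing of $\mathrm{SK}_1$ is recorded in the remark after Question \ref{INT}); thus $\xi=\det(\Psi)$ in $K_1(Q_\Lambda[G])$.

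Next I would base-change the free resolution along $\Lambda[G]\hookrightarrow M_{\Lambda[G]}$. Because $M_{\Lambda[G]}=\prod_i M_{m_i}(O_{D_{\Lambda,i}})$ is flat over $\Lambda[G]$? --- this is false in general, so instead I would argue more carefully: tensoring the two-term complex $[\Lambda[G]^a\xrightarrow{\Psi}\Lambda[G]^a]$ with $M_{\Lambda[G]}$ gives a two-term complex of free $M_{\Lambda[G]}$-modules whose class in $K_0(M_{\Lambda[G]},Q_\Lambda[G])$ is the image of $[\mathrm{SC}(U,T,T^0)]$ under the natural map $K_0(\Lambda[G],Q_\Lambda[G])\to K_0(M_{\Lambda[G]},Q_\Lambda[G])$; its boundary is represented by the same matrix $\Psi$ now viewed inside $M_a(M_{\Lambda[G]})\cap\Gl_a(Q_\Lambda[G])$. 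Decomposing $\Psi$ componentwise, on the $i$-th factor it becomes a matrix $\Psi_i\in M_{am_i}(O_{D_{\Lambda,i}})\cap\Gl_{am_i}(D_{\Lambda,i})$ after Morita equivalence. Now I would invoke Proposition \ref{image-det-max}: for the pair $(M_{m_i}(O_{D_{\Lambda,i}}), M_{m_i}(D_{\Lambda,i}))$, the Dieudonn\'e determinant of any such matrix lies in the image of $M_{m_i}(O_{D_{\Lambda,i}})\cap M_{m_i}(D_{\Lambda,i})^*\to (M_{m_i}(D_{\Lambda,i})^*)^{ab}$. Taking the product over $i$, this says exactly that $\det(\Psi)$ --- hence $\xi$ --- has a representative in $M_{\Lambda[G]}\cap Q_\Lambda[G]^*$ under the isomorphism $K_1(Q_\Lambda[G])\cong (Q_\Lambda[G]^*)^{ab}$ of \eqref{K1-abelian}.

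The main obstacle is the passage from ``$\X$ has a free resolution of length one over $\Lambda[G]$'' to ``$\mathrm{SC}(U,T,T^0)$ is quasi-isomorphic to a two-term complex of \emph{free} $\Lambda[G]$-modules with \emph{square} boundary matrix.'' In the totally even case this is clean from \eqref{eq:desc_complex_even} once $\chi\neq 1$ kills $H^3$. In the totally odd case one must work with the four-term exact sequence \eqref{complexes-iwasawa-module}, showing that the two local error terms $\bigoplus_\omega H^1(\Gamma_\omega,H^0(I_\omega,\mathfrak D(\chi)))^\vee$ and $\bigoplus_\omega H^0(G_\omega,\mathfrak D(\chi))^\vee$ --- each of which is a finitely generated torsion $\Lambda$-module of projective dimension $\le 1$ over $\Lambda[G]$ by a local computation --- can be spliced with $H^2(\mathrm{SC})$ so that the resulting complex representative has free terms of equal rank; equivalently, one must check that $[\mathrm{SC}(U,T,T^0)]$ in the relative $K_0$-group is represented by a tuple $(P,\alpha,P)$ with $P$ free, which follows because projective $\Lambda[G]$-modules of the relevant ranks are stably free and rank considerations force equality. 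A secondary subtlety is ensuring the equality $\xi=\det(\Psi)$ genuinely holds in $K_1(Q_\Lambda[G])$ and not merely up to $\mathrm{SK}_1$; this is where the Nakayama--Matsushima and Platonov stability inputs cited in the remark after Question \ref{INT} are essential, and I would state that reduction explicitly before invoking Proposition \ref{image-det-max}.
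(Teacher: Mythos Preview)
Your overall architecture matches the paper's: represent $[\mathrm{SC}(U,T,T^0)]$ by a square matrix $\Psi\in M_n(\Lambda[G])\cap\Gl_n(Q_\Lambda[G])$ coming from the free resolution of $\X$ (this is exactly the paper's equation \eqref{equality-complex-module}), then project along the Artin--Wedderburn decomposition and invoke Proposition \ref{image-det-max} on each factor. The detour through a base change to $M_{\Lambda[G]}$ and the worry about flatness are unnecessary: the paper simply observes that $\Psi$ (its $A_\X$) already has entries in $\Lambda[G]\subset M_{\Lambda[G]}$, so after projection it lies in $M_{nm_i}(O_{D_{\Lambda,i}})$ and Proposition \ref{image-det-max} applies directly.

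There is, however, a genuine gap in your argument. You claim that $\partial(\xi)=\partial(\det(\Psi))$ forces $\xi=\det(\Psi)$ in $K_1(Q_\Lambda[G])$ ``modulo $\mathrm{SK}_1(Q_\Lambda[G])=0$''. This conflates two unrelated kernels. The localization sequence
\[
K_1(\Lambda[G])\longrightarrow K_1(Q_\Lambda[G])\xrightarrow{\ \partial\ } K_0(\Lambda[G],Q_\Lambda[G])
\]
identifies $\ker(\partial)$ with the \emph{image of} $K_1(\Lambda[G])$, which is far from trivial; $\mathrm{SK}_1(Q_\Lambda[G])$ is the kernel of the reduced norm and plays no role here. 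Consequently one only obtains $\xi=\det(\Psi)\cdot\det(B)$ for some $B\in\Gl_\infty(\Lambda[G])$, not $\xi=\det(\Psi)$. The paper handles exactly this: since $\Lambda[G]$ is semi-local, the Dieudonn\'e determinant gives $K_1(\Lambda[G])\cong(\Lambda[G]^*)^{ab}$, so $\det(B)$ has a representative in $\Lambda[G]^*\subset M_{\Lambda[G]}\cap Q_\Lambda[G]^*$. Thus the extra factor is harmless for the conclusion, but your justification for dropping it is wrong, and your final paragraph repeats the same confusion (``not merely up to $\mathrm{SK}_1$''). The Nakayama--Matsushima/Platonov input is used in the paper only to ensure uniqueness of $\xi$ given its reduced norm (Conjecture \ref{conj:int_prop}); it has nothing to do with the kernel of $\partial$.
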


Let $\mathbf{H}_{S^*}$ denote the exact subcategory of the category of $\Lambda[G]$-modules, whose objects are finitely generated $\Lambda[G]$-modules  that are $S^*$-torsion and that have finite projective dimension. It turns out that we have the following natural isomorphisms:
\begin{align*}
K_0(\Lambda[G],Q_\Lambda[G]) & \cong K_0\left(\mathbf{H}_{1,S^*}\right) \\ & \cong  K_0\left(\mathbf{H}_{S^*}\right), \qquad \text{Corollary 7.7.3 to the Resolution Theorem 7.6 in Chapter II of \cite{MR3076731}}, \\ & \cong K_0\left(\mathbf{Ch}_{S^*}^\flat\big(\mathbf{P}(\Lambda[G])\big)\right), \qquad \text{Exercise 9.13 in Chapter II of \cite{MR3076731}}.
\end{align*}

From the perspective of homological algebra, one difficulty with using the first description of the relative $K_0$-group is that the Iwasawa algbera $\Lambda[G]$ may have infinite global dimension. As a result, one really does need to use this workaround to work with an element in the relative $K_0$-group. Suppose, as when Theorems \ref{evenprojectivity} and \ref{oddprojectivity}  indicate, for the rest of this section that the $\Lambda[G]$-module $\X$ has a free resolution of length one. That is, we have the following short exact sequence of $\Lambda[G]$-modules:
\begin{align} \label{freeresol}
0 \rightarrow \Lambda[G]^n \xrightarrow {A_\X} \Lambda[G]^n \rightarrow \X \rightarrow 0.
\end{align}
We have the following equality in the $K_0(\Lambda[G],Q_\Lambda[G])$:
\begin{align} \label{equality-complex-module}
\left[\mathrm{SC}\left(U,T,T^0\right)\right] = \left[\left(\Lambda[G],A_\X, \Lambda[G]\right)\right].
\end{align}
When $\chi$ is totally even and non-trivial, this equality follows from equation (\ref{eq:desc_complex_even}).

Let us now see why equality holds in equation (\ref{equality-complex-module}) when the character $\chi$ is totally odd. Note that $\mathbf{H}_{S^*}$ is closed under kernels of surjections inside the abelian category of finitely generated (left) $\Lambda[G]$-modules. By Theorem 9.2.2 in Chapter II of Weibel's K-book \cite{MR3076731}, we have the natural isomorphism $K_0\left(\mathbf{H}_{S^*}\right) \cong K_0\left(\mathbf{Ch}^\flat\left(\mathbf{H}_{S^*}\right)\right)$. Here, $\mathbf{Ch}^\flat\left(\mathbf{H}_{S^*}\right)$ is the category of bounded chain complexes in $\mathbf{H}_{S^*}$. By the same theorem, the equality in equation (\ref{equality-complex-module}) would follow if one can show that the Euler characteristic of the chain complex $\X \rightarrow H^2\bigg(\mathrm{SC}\left(U,T,T^0\right)\bigg)$, obtained from equation (\ref{complexes-iwasawa-module}), equals zero in $K_0\left(\mathbf{Ch}^\flat\left(\mathbf{H}_{S^*}\right)\right)$.

Theorem \ref{oddprojectivity} tells us that $\X$ has a free resolution of length one under one of the following conditions:
\begin{enumerate}[label=(\Roman*), ref=\Roman*]
\item\label{condn-remark-I} $H^0\left(G_\omega,\mathfrak{D}(\chi)\right)=0$.
\item\label{condn-remark-II} $\omega$ is tamely ramified in the extension $L_\infty / K_\infty$.
\end{enumerate}
One can compare the modules on either side of the exact sequence (\ref{complexes-iwasawa-module}) using the observations in Section \ref{first-odd-case}. If condition \ref{condn-remark-I} holds for the prime $\omega$ in $\Sigma_p(L_\infty)$, then
\begin{align*}
H^1\left(\Gamma_\omega, H^0\left(I_\omega, \frac{\Q_p(\chi)}{\Z_p(\chi)}\right)\right)^\vee \cong H^0\left(G_\omega, \frac{\Q_p(\chi)}{\Z_p(\chi)}\right)^\vee  =0.
\end{align*}
If condition \ref{condn-remark-I} does not hold and condition \ref{condn-remark-II} holds  for the prime $\omega$ in $\Sigma_p(L_\infty)$, then
\begin{align*}
H^1\left(\Gamma_\omega, H^0\left(I_\omega, \frac{\Q_p(\chi)}{\Z_p(\chi)}\right)\right)^\vee \cong H^0\left(G_\omega, \frac{\Q_p(\chi)}{\Z_p(\chi)}\right)^\vee  \cong \Z_p.
\end{align*}
The equality in equation (\ref{equality-complex-module}) follows from these observations and equation (\ref{complexes-iwasawa-module}).

\subsection{Proof of Theorem \ref{integrality-determinant}} \mbox{}

We let  \begin{align} \label{maximal-order-definition}
M_{\Lambda[G]} := \prod M_{m_i}\left(O_{D_i} \otimes_{\Z_p} \Lambda\right).
\end{align}
Proposition 10.5 in Reiner's book \cite{MR0393100} and Proposition \ref{establish-max-order} tell us that $M_{\Lambda[G]}$ is a maximal $\Lambda$-order containing $\Lambda[G]$ inside $Q_\Lambda[G]$.

\intdeterminant*

\begin{proof}
Consider the following resolution of the $\Lambda[G]$-module $\X$:
\begin{align}
0 \rightarrow \Lambda[G]^n \xrightarrow {A_\X} \Lambda[G]^n \rightarrow \X \rightarrow 0.
\end{align}
Here, $A_\X$ is a matrix in $M_n(\Lambda[G]) \cap \Gl_n(Q_\Lambda[G])$. Note that, since Conjecture \ref{conj:noncomm} is assumed to hold, we have the following equality in $K_0\left(\Lambda[G], Q_\Lambda[G] \right)$:
\begin{align}
\partial(\xi) = \partial\left(A_\X\right) = \left(\Lambda[G]^n,A_\X,\Lambda[G]^n\right).
\end{align}
The localization exact sequence in $K$-theory gives us the following exact sequence
\begin{align*}
K_1(\Lambda[G]) \rightarrow K_1(Q_\Lambda[G]) \xrightarrow {\partial} K_0\left(\Lambda[G], Q_\Lambda[G] \right).
\end{align*}
Since the ring $\Lambda[G]$  is also a semi-local ring, the Dieudonn\'e determinant also provides us an isomorphism $K_1(\Lambda[G]) \cong (\Lambda[G]^*)^{ab}$. We have $$\det(\xi) = \det\left(A_\X\right) \det(B) \in (Q_\Lambda[G]^*)^{ab},$$
where $B$ in a matrix in $\Gl_\infty(\Lambda[G])$. The isomorphism $K_1(\Lambda[G]) \cong (\Lambda[G]^*)^{ab}$ allows us to find a representative for $\det(B)$ in $\Lambda[G]$. To prove the theorem, it now suffices to show that $\det\left(A_\X\right)$ belongs to the image of the natural map of multiplicative monoids:
\begin{align*}
M_{\Lambda[G]} \cap Q_\Lambda[G]^* \rightarrow K_1\left(Q_\Lambda[G]\right)
\end{align*}
The Artin-Weddernburn theorem and equation (\ref{comm-diag-artin}) gives us the following isomorphism:
\begin{align} \label{different-division-algebras}
Q_\Lambda[G] \cong \prod_i M_{m_i}(D_i\otimes_{\Lambda} Q_\Lambda)
\end{align}
Let $\sigma_i$ denote the projection onto the $i$-th factor. The description of the maximal order, given in (\ref{maximal-order-definition}), now allows us to work with each factor in the product decomposition of (\ref{different-division-algebras}). It now suffices to show that $\det\left(\sigma_i\left(A_\X\right)\right)$ belongs to the image of the natural map of multiplicative monoids:
\begin{align*}
M_{m_i}\left(O_{D_i} \otimes_{\Z_p} \Lambda\right) \bigcap \Gl_{m_i}(D_i\otimes_{\Lambda} Q_\Lambda) \rightarrow K_1\left(M_{m_i}(D_i\otimes_{\Lambda} Q_\Lambda)\right) \cong (D_i\otimes_{\Lambda} Q_\Lambda)^{ab}
\end{align*}
This last statement, and hence the theorem, follows from Proposition \ref{image-det-max}.
\end{proof}

\section{Cohomological criterion}

In this section, we recall the cohomological criterion developed by Ralph Greenberg in the AMS memoir \cite{greenberg2011iwasawa} on Iwasawa theory, projective modules and modular representations.

A theorem of Iwasawa \cite{MR0349627} (see also Proposition 1 in Greenberg's work on $p$-adic Artin $L$-functions \cite{greenberg2014p}) asserts that $\X$ is a torsion module over $\Lambda[G]$.  When $\chi$ is totally even, Proposition 6.10 (along with the validity of the Weak Leopoldt conjecture) in Greenberg's work on the structure of Galois cohomology groups \cite{MR2290593} asserts that $\X$ has no non-zero finite $\Lambda$-submodules. See Theorem 10.3.25 in the book by Neukirch, Schmidt and Winberg \cite{neukirch2008cohomology} as to why the weak Leopoldt conjecture is valid in this setting. When $\chi$ is totally odd, the discussion in Section 4.4 of Greenberg's recent work on the structure of Selmer groups \cite{greenberg2014pseudonull} asserts that $\X$ has no non-zero finite $\Lambda$-submodules. These results allow us to apply the cohomological criterion developed by Greenberg (Proposition 2.4.1 in \cite{greenberg2011iwasawa}).

\begin{proposition}[Proposition 2.4.1 in \cite{greenberg2011iwasawa}] \label{greenberg-freeness}
The $\Lambda[G]$-module $\X$ has a free resolution of length one if and only if there exists a positive integer $m$ such that \begin{align}\label{vanishing-criterion}
H^{m}\left(P, \Sel_{\mathfrak{D}(\chi)}(L_\infty)\right)=0, \qquad H^{m+1}\left(P, \Sel_{\mathfrak{D}(\chi)}(L_\infty)\right)=0
\end{align}
for every subgroup $P$ of $P_G$. Here, $P_G$ is some $p$-Sylow subgroup of $G$.
\end{proposition}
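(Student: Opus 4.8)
The plan is to reinterpret the conclusion homologically and then dualize. First I would observe that, for $\X$, ``having a free resolution of length one over $\Lambda[G]$'' is equivalent to $\mathrm{pd}_{\Lambda[G]}\X\le 1$: in any projective resolution $0\to P_1\to P_0\to\X\to 0$ the classes of $P_0$ and $P_1$ agree in $K_0(Q_\Lambda[G])$ because $\X$ is $\Lambda$-torsion (Iwasawa), hence they already agree in $K_0(\Lambda[G])$ since the decomposition map $K_0(\Lambda[G])\cong K_0(\mathbb{F}_p[G])\hookrightarrow K_0(Q_\Lambda[G])$ is injective; as $\Lambda[G]$ is semiperfect, Krull--Schmidt gives $P_0\cong P_1$, and adjoining a common projective complement to both flanking terms yields a genuine resolution $0\to\Lambda[G]^n\xrightarrow{A_\X}\Lambda[G]^n\to\X\to 0$. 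Next I would make two reductions. Since $\X$ has no nonzero finite $\Lambda$-submodule (Greenberg's result recalled above) and $\Lambda$ is a two-dimensional regular local ring, $\mathrm{pd}_\Lambda\X\le 1$; and for a subgroup $H\le G$ of order prime to $p$ the ring $\Lambda[H]$ is separable over $\Lambda$ (Maschke), so $\mathrm{pd}_{\Lambda[H]}\X\le\mathrm{pd}_\Lambda\X\le 1$. Together with the standard fact that a finitely generated module over a finite group ring $R[G]$ has projective dimension $\le n$ over $R[G]$ if and only if its restriction to a Sylow $\ell$-subgroup does for every prime $\ell$, this gives $\mathrm{pd}_{\Lambda[G]}\X\le 1\iff \mathrm{pd}_{\Lambda[P_G]}\X\le 1$.

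The core of the proof is a Rim--Nakayama type statement over $\Lambda$: for a finite $p$-group $P$ and a finitely generated $\Lambda[P]$-module $M$ with $\mathrm{pd}_\Lambda M\le 1$, one has $\mathrm{pd}_{\Lambda[P]}M\le 1$ if and only if $M$ is cohomologically trivial as a $P$-module. One direction is immediate: applying $\widehat{H}^\ast(P,-)$ to a free resolution $0\to\Lambda[P]^n\to\Lambda[P]^n\to M\to 0$ and using that induced modules are cohomologically trivial annihilates every $\widehat{H}^q(P,M)$. For the converse, form the first syzygy $0\to K\to\Lambda[P]^n\to M\to 0$; then $\widehat{H}^q(P,K)\cong\widehat{H}^{q-1}(P,M)=0$ for all $q$, so $K$ is cohomologically trivial, and $K$ is $\Lambda$-torsion-free with $\mathrm{pd}_\Lambda K\le 1$; for such a $K$ a Rim--Nakayama argument forces $K$ to be $\Lambda[P]$-projective, hence free (as $\Lambda[P]$ is local), so $\mathrm{pd}_{\Lambda[P]}M\le 1$. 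Applying this to $\X$ over $\Lambda[P_G]$, and restricting a length-one free $\Lambda[P_G]$-resolution to any subgroup $P\le P_G$ (over which $\Lambda[P_G]$ is free), one sees that $\mathrm{pd}_{\Lambda[P_G]}\X\le 1$ is equivalent to $\X$ being cohomologically trivial over every subgroup $P$ of $P_G$.

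It remains to match this with the vanishing in (\ref{vanishing-criterion}). As $\Sel_{\mathfrak{D}(\chi)}(L_\infty)$ is the Pontryagin dual of $\X$, dualizing the standard complexes gives, for every subgroup $P\le P_G$ and every integer $m\ge 1$,
\[
H^m\!\left(P,\Sel_{\mathfrak{D}(\chi)}(L_\infty)\right)^{\vee}\;\cong\;H_m(P,\X)\;\cong\;\widehat{H}^{-m-1}(P,\X).
\]
Hence the simultaneous vanishing of $H^m$ and $H^{m+1}$ for all subgroups of $P_G$ is precisely the vanishing of the two consecutive Tate cohomology groups $\widehat{H}^{-m-1}(P,\X)$ and $\widehat{H}^{-m-2}(P,\X)$ for all such $P$. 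By the classical theorem that, over a $p$-group, vanishing of Tate cohomology in two consecutive degrees forces vanishing in all degrees, this is equivalent to $\X$ being cohomologically trivial over every subgroup of $P_G$, which by the previous paragraphs is equivalent to $\mathrm{pd}_{\Lambda[G]}\X\le 1$, i.e.\ to $\X$ having a free resolution of length one. The quantifier ``there exists $m$'' is consistent with this equivalence: cohomological triviality makes $H^m(P,\Sel_{\mathfrak{D}(\chi)}(L_\infty))$ vanish for every $m\ge 1$ and every $P\le P_G$, while conversely one admissible value of $m$ already produces cohomological triviality.

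The step I expect to be the main obstacle is the hard direction of the $p$-group statement: showing that the cohomologically trivial, $\Lambda$-torsion-free syzygy $K$ is genuinely $\Lambda[P]$-free. Over $\Z_{(p)}$ this is a theorem of Rim, and the task is to verify that its proof --- reduce modulo the maximal ideal, use that Tate cohomology in low degree detects projectivity of such modules, and lift by completeness --- survives replacing $\Z_{(p)}$ by the two-dimensional regular local ring $\Lambda$; the regularity of $\Lambda$, its residue characteristic $p$, and the finiteness of $\mathrm{pd}_\Lambda K$ are exactly the inputs one needs. Everything else is either classical group cohomology or the bookkeeping carried out in the first paragraph.
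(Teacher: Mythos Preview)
The paper does not prove this proposition; it is quoted verbatim from Greenberg's memoir, and the paper only appends two remarks: that the vanishing need only be checked on subgroups of a $p$-Sylow subgroup $P_G$ (since every element of $\Sel_{\mathfrak{D}(\chi)}(L_\infty)$ is $p$-power torsion), and that two consecutive vanishings suffice by Theorem~4.2.3 in Hida's book. Your proposal goes well beyond this, supplying an actual proof along the lines one expects from Greenberg's original argument.

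Your outline is sound. The reduction to $P_G$, the Rim--Nakayama characterization of cohomological triviality over a $p$-group as $\mathrm{pd}_{\Lambda[P]}\le 1$, the Pontryagin-duality identification $H^m(P,\Sel_{\mathfrak{D}(\chi)}(L_\infty))^\vee\cong H_m(P,\X)\cong\widehat H^{-m-1}(P,\X)$, and the two-consecutive-degrees criterion for cohomological triviality are exactly the standard ingredients, and your packaging of them matches the two reductions the paper singles out in its remarks. Your opening $K$-theoretic step---injectivity of $K_0(\Lambda[G])\to K_0(Q_\Lambda[G])$ (via the CDE triangle, where the map is essentially the transpose of the decomposition matrix and hence has full rank) together with Krull--Schmidt over the semiperfect ring $\Lambda[G]$---is a correct way to upgrade ``$\mathrm{pd}_{\Lambda[G]}\X\le 1$'' to ``free resolution of length one''; Greenberg handles this passage as well, so you are not adding an extraneous hypothesis.

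The step you flag as the main obstacle, namely that a cohomologically trivial $\Lambda$-torsion-free $\Lambda[P]$-module with $\mathrm{pd}_\Lambda\le 1$ is $\Lambda[P]$-projective, is indeed the technical heart of Greenberg's Proposition~2.4.1, and your proposed route (reduce modulo the maximal ideal of $\Lambda$, invoke the classical $\mathbb{F}_p[P]$ result, lift by completeness using that $P$ is a $p$-group and $\Lambda$ is regular local of residue characteristic $p$) is how it is done. In short: the paper defers the proof entirely to Greenberg, while you have correctly reconstructed its skeleton.
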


\begin{remark}
Though Proposition 2.4.1 in \cite{greenberg2011iwasawa} requires us to verify the vanishing criterion (given in (\ref{vanishing-criterion})) for all subgroups of $G$, it suffices to restrict ourselves to subgroups of a $p$-sylow subgroup $P_G$. This is because every element of the discrete module $\Sel_{\mathfrak{D}(\chi)}(L_\infty)$ is killed by a power of $p$. Furthermore, Proposition 2.4.1 in \cite{greenberg2011iwasawa} establishes that $\X$ has a free resolution of length one if and only if for every subgroup $P$ of $P_G$
\begin{align}\label{greenberg-coho-vanishing}
H^{i}\left(P, \Sel_{\mathfrak{D}(\chi)}(L_\infty)\right)=0, \qquad \ \forall \  i \geq 1.
\end{align}
Theorem 4.2.3 in Hida's book \cite{hida2000modular} allows us to deduce that the validity of (\ref{vanishing-criterion}) implies the validity of equation (\ref{greenberg-coho-vanishing}).
\end{remark}

\begin{remark} \label{nmidG}
When $p$ does not divide $|G|$, the cohomology groups appearing in (\ref{vanishing-criterion}) vanish. As the cohomological criterion in Proposition \ref{greenberg-freeness} illustrates, in this case, the $\Lambda[G]$-module $\X$ has a free resolution of length one.
\end{remark}

\subsection{Global cohomology groups and Proof of Theorem \ref{evenprojectivity}}

In this section, we want to prove the following theorem stated in the introduction.

\evprojectivity*

Before proving the theorem, let us  introduce some notations.  Let $P_G$ denote a $p$-Sylow subgroup of $G$. Let $P$ be a subgroup of $P_G$. By Galois theory, we can identify $P$ with a  Galois group $\Gal{L_\infty}{F_\infty}$, for some field $F_\infty$ such that $K_\infty \subset L_\infty^{P_G} \subset F_\infty \subset L_\infty$. We have
 \begin{align*}
P \cong \Gal{L_\infty}{F_\infty}.
\end{align*}

\begin{lemma} \label{diff-maps-iso}
The differential maps in the Hochschild-Serre spectral sequence
 \begin{align*}
H^i\bigg(\Gal{L_\infty}{F_\infty}, \ H^j\big(\Gal{K_\Sigma}{L_\infty},\mathfrak{D}(\chi)\big)\bigg) \implies H^{i+j}\big(\Gal{K_\Sigma}{F_\infty},\mathfrak{D}(\chi)\big),
\end{align*}
yield the following isomorphism, for each $i \geq 1$:
 \begin{align*}
H^i\bigg(\Gal{L_\infty}{F_\infty}, \ H^1\big(\Gal{K_\Sigma}{L_\infty},\mathfrak{D}(\chi)\big)\bigg) \cong H^{i+2}\bigg(\Gal{L_\infty}{F_\infty}, \ H^0\big(\Gal{K_\Sigma}{L_\infty},\mathfrak{D}(\chi)\big)\bigg).
\end{align*}
\end{lemma}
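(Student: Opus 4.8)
The plan is to exploit the cohomological dimension of the group $\Gal{K_\Sigma}{L_\infty}$. A theorem of Tate (see e.g. \cite{neukirch2008cohomology}, or the relevant result on Galois cohomology of the maximal extension unramified outside $\Sigma$) tells us that $\mathrm{cd}_p\left(\Gal{K_\Sigma}{L_\infty}\right) = 2$; in particular $H^j\big(\Gal{K_\Sigma}{L_\infty},\mathfrak{D}(\chi)\big) = 0$ for all $j \geq 3$, so the Hochschild–Serre spectral sequence
\begin{align*}
E_2^{i,j} = H^i\bigg(\Gal{L_\infty}{F_\infty}, \ H^j\big(\Gal{K_\Sigma}{L_\infty},\mathfrak{D}(\chi)\big)\bigg) \implies H^{i+j}\big(\Gal{K_\Sigma}{F_\infty},\mathfrak{D}(\chi)\big)
\end{align*}
is concentrated in the three columns $j = 0, 1, 2$. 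So first I would record this vanishing and write down the resulting three-row spectral sequence.

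Next I would bring in the weak Leopoldt conjecture, which (in this totally even setting, as recalled just before Proposition \ref{greenberg-freeness}) is known and implies $H^2\big(\Gal{K_\Sigma}{L_\infty},\mathfrak{D}(\chi)\big) = 0$. Hence the $j = 2$ column of $E_2$ also vanishes, leaving only the two columns $j = 0$ and $j = 1$. A two-column spectral sequence degenerates at $E_2$ up to the single family of differentials $d_2 : E_2^{i,1} \to E_2^{i+2,0}$; since both the $j=2$ and all $j \geq 3$ columns are zero, these $d_2$ maps have no further room to be killed or to map out, and for $i \geq 1$ the abutment $H^{i+2}\big(\Gal{K_\Sigma}{F_\infty},\mathfrak{D}(\chi)\big)$ is itself zero (again by $\mathrm{cd}_p = 2$ for $\Gal{K_\Sigma}{F_\infty}$). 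Therefore $d_2 : E_2^{i,1} \to E_2^{i+2,0}$ must be an isomorphism for every $i \geq 1$, which is precisely the claimed identification
\begin{align*}
H^i\bigg(\Gal{L_\infty}{F_\infty}, \ H^1\big(\Gal{K_\Sigma}{L_\infty},\mathfrak{D}(\chi)\big)\bigg) \cong H^{i+2}\bigg(\Gal{L_\infty}{F_\infty}, \ H^0\big(\Gal{K_\Sigma}{L_\infty},\mathfrak{D}(\chi)\big)\bigg).
\end{align*}

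The routine bookkeeping is checking that the relevant entries of $E_2^{i+2,0}$ and $E_2^{i,1}$ receive and emit no other differentials (immediate from the three-column shape) and that the abutment in total degree $i+2 \geq 3$ vanishes. The main thing to be careful about is the input facts: that $\mathrm{cd}_p \le 2$ for the relevant Galois groups over the cyclotomic $\Z_p$-extension, and that the weak Leopoldt conjecture (hence $H^2 = 0$) genuinely holds here — both are cited in the surrounding text, so the argument is really just a clean spectral sequence degeneration once those are invoked. I do not anticipate a serious obstacle; the only subtlety is making sure the vanishing of $H^2\big(\Gal{K_\Sigma}{L_\infty},\mathfrak{D}(\chi)\big)$ is available uniformly (it is, being a statement about the fixed field $L_\infty$, independent of $F_\infty$).
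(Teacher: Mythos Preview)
Your approach is exactly the paper's: use $\mathrm{cd}_p \leq 2$ together with the weak Leopoldt conjecture to collapse the Hochschild--Serre spectral sequence to the two rows $j=0,1$, and then read off the claimed isomorphism from the differential $d_2$.

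There is one small gap, however. You verify only that the abutment vanishes in total degree $i+2 \geq 3$ (via $\mathrm{cd}_p \leq 2$ for $\Gal{K_\Sigma}{F_\infty}$), and this gives \emph{surjectivity} of $d_2 : E_2^{i,1} \to E_2^{i+2,0}$. For \emph{injectivity} you need $E_\infty^{i,1} = \ker(d_2) = 0$, and $E_\infty^{i,1}$ is a graded piece of the abutment in total degree $i+1$. At $i=1$ this is a subquotient of $H^2\big(\Gal{K_\Sigma}{F_\infty},\mathfrak{D}(\chi)\big)$, which $\mathrm{cd}_p \leq 2$ alone does not kill. The paper closes this by invoking the weak Leopoldt conjecture over $F_\infty$ as well (not only over $L_\infty$), obtaining $H^j\big(\Gal{K_\Sigma}{F_\infty},\mathfrak{D}(\chi)\big)=0$ for all $j \geq 2$. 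With that one additional line your argument is complete and identical to the paper's.
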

\begin{proof}
The $p$-cohomological dimensions of $\Gal{K_\Sigma}{L_\infty}$ and $\Gal{K_\Sigma}{F_\infty}$ are less than or equal to  $2$. The validity of the Weak Leopoldt conjecture (Theorem 10.3.25 in the book by Neukirch, Schmidt and Winberg \cite{neukirch2008cohomology}) allows us to conclude that
 \begin{align*}
H^2\big(\Gal{K_\Sigma}{L_\infty},\mathfrak{D}(\chi)\big) =0, \qquad H^2\big(\Gal{K_\Sigma}{F_\infty},\mathfrak{D}(\chi)\big) =0.
\end{align*}

These observations combine to give us the following equalities:
\begin{align*}
H^i\bigg(\Gal{L_\infty}{F_\infty}, \ H^j\big(\Gal{K_\Sigma}{L_\infty},\mathfrak{D}(\chi)\big)\bigg) =0, \qquad \forall j \geq 2.  \\
H^j\big(\Gal{K_\Sigma}{F_\infty},\mathfrak{D}(\chi)\big) =0, \qquad \forall j \geq 2.
\end{align*}
This completes the proof of the lemma.
\end{proof}

Suppose that the character $\chi$ is totally odd. In this case, we have  $H^0\big(\Gal{K_\Sigma}{L_\infty},\mathfrak{D}(\chi)\big)=0$. This observation uses the fact that  $L_\infty$ is a totally real field. As an immediate consequence of Lemma \ref{diff-maps-iso}, we obtain the following result.

\begin{lemma} \label{odd-global}
Suppose that the character $\chi$ is totally odd. We have the following equality:
\begin{align}
H^i\bigg(\Gal{L_\infty}{F_\infty}, \ H^1\big(\Gal{K_\Sigma}{L_\infty},\mathfrak{D}(\chi)\big)\bigg) = 0.
\end{align}
\end{lemma}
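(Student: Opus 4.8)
## Proof Proposal

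The plan is to deduce this immediately from Lemma~\ref{diff-maps-iso} together with the vanishing of $H^0\big(\Gal{K_\Sigma}{L_\infty},\mathfrak{D}(\chi)\big)$ in the totally odd case. First I would record the key input: since $\chi$ is totally odd, the module $\mathfrak{D}(\chi) = \Q_p(\chi)/\Z_p(\chi)$ carries a Galois action that is nontrivial on complex conjugation, so a nonzero element of $\mathfrak{D}(\chi)^{\Gal{K_\Sigma}{L_\infty}}$ would force $\chi$ to be trivial on complex conjugation of $L_\infty$; but $L_\infty$ is totally real, so complex conjugation acts by $\chi$ of a complex place, which is $-1$. Hence $H^0\big(\Gal{K_\Sigma}{L_\infty},\mathfrak{D}(\chi)\big)=0$.

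The main body of the argument is then a one-line substitution. Lemma~\ref{diff-maps-iso} gives, for every $i \geq 1$,
\begin{align*}
H^i\bigg(\Gal{L_\infty}{F_\infty}, \ H^1\big(\Gal{K_\Sigma}{L_\infty},\mathfrak{D}(\chi)\big)\bigg) \cong H^{i+2}\bigg(\Gal{L_\infty}{F_\infty}, \ H^0\big(\Gal{K_\Sigma}{L_\infty},\mathfrak{D}(\chi)\big)\bigg),
\end{align*}
and since the right-hand side is group cohomology with coefficients in the zero module, it vanishes. Therefore the left-hand side vanishes for all $i \geq 1$, which is exactly the assertion of Lemma~\ref{odd-global}.

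There is essentially no obstacle here; the only point requiring a moment's care is making sure the hypothesis $i \geq 1$ in Lemma~\ref{diff-maps-iso} is what is needed, which it is, since Lemma~\ref{odd-global} is stated (implicitly) for the positive-degree cohomology groups that feed into Greenberg's cohomological criterion (Proposition~\ref{greenberg-freeness}). One could alternatively argue directly from the Hochschild–Serre spectral sequence: with $H^0 = H^2 = 0$ for $\Gal{K_\Sigma}{L_\infty}$ and $\Gal{K_\Sigma}{F_\infty}$, the spectral sequence collapses onto the single row $j=1$, forcing $H^i\big(\Gal{L_\infty}{F_\infty}, H^1\big) \cong H^{i+1}\big(\Gal{K_\Sigma}{F_\infty},\mathfrak{D}(\chi)\big)$, and the latter vanishes for $i \geq 1$ because the $p$-cohomological dimension of $\Gal{K_\Sigma}{F_\infty}$ is at most $2$ while $H^2$ already vanishes by weak Leopoldt. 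But routing through Lemma~\ref{diff-maps-iso} is the cleanest path and is what I would write.
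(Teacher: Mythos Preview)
Your proposal is correct and follows exactly the paper's approach: the paper notes that $H^0\big(\Gal{K_\Sigma}{L_\infty},\mathfrak{D}(\chi)\big)=0$ because $\chi$ is totally odd and $L_\infty$ is totally real, and then cites Lemma~\ref{diff-maps-iso} as giving the result immediately. Your write-up simply spells out the complex-conjugation reasoning and the substitution into Lemma~\ref{diff-maps-iso} in a bit more detail than the paper does.
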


We will use the following simple observation frequently in this paper.
\begin{observation} \label{non-trivial-residual-obs}
The finite character $\chi : G_\Sigma \rightarrow \Z_p^\times$ is non-trivial if and only if the residual character \mbox{$\overline{\chi}~:~G_\Sigma \xrightarrow{\chi}~\Z_p^\times~\rightarrow~\mathbb{F}_p^\times$} associated to it is non-trivial.
\end{observation}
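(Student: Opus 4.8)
The plan is to unwind the definitions and use the structure of the source and target groups. Recall that $\chi$ is a finite character $G_\Sigma \to \mathbb{F}_p^\times \hookrightarrow \Z_p^\times$; the paper has fixed that the image of $\chi$ actually lies inside the subgroup $\mathbb{F}_p^\times$ of $\Z_p^\times$ (the Teichm\"uller lift). The key point is that the composite $\mathbb{F}_p^\times \hookrightarrow \Z_p^\times \twoheadrightarrow \mathbb{F}_p^\times$ — where the second map is reduction modulo the maximal ideal — is the identity on $\mathbb{F}_p^\times$. Indeed, $\Z_p^\times \cong \mathbb{F}_p^\times \times (1 + p\Z_p)$ as topological groups (since $p$ is odd), and the Teichm\"uller section $\mathbb{F}_p^\times \hookrightarrow \Z_p^\times$ is precisely a splitting of the reduction map. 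Consequently the residual character $\overline{\chi}$ is literally equal to $\chi$ once we identify the image of $\chi$ with its reduction.

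First I would make this explicit: write $\overline{\chi} = r \circ \chi$ where $r: \Z_p^\times \to \mathbb{F}_p^\times$ is reduction mod $p$, and observe that by hypothesis $\chi$ factors as $\chi = \iota \circ \chi_0$ for some $\chi_0 : G_\Sigma \to \mathbb{F}_p^\times$ and the Teichm\"uller embedding $\iota : \mathbb{F}_p^\times \hookrightarrow \Z_p^\times$. Since $r \circ \iota = \mathrm{id}_{\mathbb{F}_p^\times}$, we get $\overline{\chi} = r \circ \iota \circ \chi_0 = \chi_0$. Thus $\overline{\chi}$ is trivial if and only if $\chi_0$ is trivial, which happens if and only if $\chi = \iota \circ \chi_0$ is trivial. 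This gives both implications at once.

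I expect no real obstacle here; the statement is essentially a bookkeeping remark about how $\chi$ has been normalized to take values in $\mathbb{F}_p^\times \subset \Z_p^\times$. The only thing worth flagging is that the equivalence would fail for a general finite character valued in $\Z_p^\times$ (e.g. one with image in $1 + p\Z_p$ could be nontrivial yet have trivial reduction); it is exactly the hypothesis that the image lands in the Teichm\"uller copy of $\mathbb{F}_p^\times$ that makes $\chi$ and $\overline{\chi}$ coincide. If one wanted a proof that did not invoke the Teichm\"uller section explicitly, one could instead argue that a nontrivial $\chi : G_\Sigma \to \mathbb{F}_p^\times$ has image a nontrivial (hence order prime to $p$) subgroup of $\mathbb{F}_p^\times$, and the reduction map restricted to any such finite subgroup of $\Z_p^\times$ of order prime to $p$ is injective, so $\overline{\chi}$ has the same (nontrivial) image; the converse is trivial since a trivial $\chi$ obviously has trivial reduction.
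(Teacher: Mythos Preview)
Your proof is correct. The paper states this as a ``simple observation'' without proof, relying on the setup in the introduction where $\chi$ is declared to factor as $G_\Sigma \to \mathbb{F}_p^\times \hookrightarrow \Z_p^\times$; your argument via the Teichm\"uller splitting $r \circ \iota = \mathrm{id}$ is exactly the intended justification, and your remark that the equivalence would fail for a general finite-order character valued in $1 + p\Z_p$ correctly isolates why the hypothesis on the image is needed.
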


We now proceed to the proof of Theorem \ref{evenprojectivity}.

\begin{proof}[Proof of Theorem \ref{evenprojectivity}]
We are working under the assumption that the character $\chi$ is totally even. In this case, note that
$ \Sel_{\mathfrak{D}(\chi)}(L_\infty) = H^1\big(\Gal{K_\Sigma}{L_\infty},\mathfrak{D}(\chi)\big)$. If the character $\chi$ is non-trivial, we have $H^0\big(\Gal{K_\Sigma}{L_\infty},\mathfrak{D}(\chi)\big)=0$. In this case when $\chi$ is even and non-trivial, by Lemma \ref{diff-maps-iso}, for each $i \geq 1$ and every subgroup $P$ of $P_G$, we  have
\begin{align} \label{even-global}
H^i\bigg(P, \ H^1\big(\Gal{K_\Sigma}{L_\infty},\mathfrak{D}(\chi)\big)\bigg) = H^i\bigg(\Gal{L_\infty}{F_\infty}, \ H^1\big(\Gal{K_\Sigma}{L_\infty},\mathfrak{D}(\chi)\big)\bigg) = 0.
\end{align}

So by Proposition \ref{greenberg-freeness}, when $\chi$ is even and non-trivial, the $\Lambda[G]$-module $\X$ has a free resolution of length one. \\

Finally, let us suppose that the character $\chi$ is trivial, so that $\mathfrak{D}(\chi) = \frac{\Q_p}{\Z_p}$. In this case, we will choose $P$ (which is isomorphic to $\Gal{L_\infty}{F_\infty}$) so that $P$ is a cyclic group of order $p$. This is possible due to our assumption that $p$ divides the Galois group $G$ (and due to Cauchy's theorem). Now, we have the following sequence of isomorphisms for each $i \geq 1$ (the second isomorphism uses the fact that we have  chosen $\Gal{L_\infty}{F_\infty}$ to be cyclic):
\begin{align*}
H^{2i-1}\bigg(\Gal{L_\infty}{F_\infty}, \ H^1\left(\Gal{K_\Sigma}{L_\infty},\mathfrak{D}(\chi)\right)\bigg) & \cong H^{2i+1}\bigg(\Gal{L_\infty}{F_\infty}, \ H^0\left(\Gal{K_\Sigma}{L_\infty},\frac{\Q_p}{\Z_p} \right)\bigg) \\& \notag \cong H^1\bigg(\Gal{L_\infty}{F_\infty}, \ H^0\left(\Gal{K_\Sigma}{L_\infty},\frac{\Q_p}{\Z_p} \right)\bigg) \\ & \notag \cong \Hom\left(\Gal{L_\infty}{F_\infty}, \frac{\Q_p}{\Z_p}\right) \cong  \frac{\Z}{p\Z}.
\end{align*}

The isomorphisms given in the equation above and Proposition \ref{greenberg-freeness} let us conclude that if the character $\chi$ is even and trivial, the $\Lambda[G]$-module $\X$ does not have a free resolution of length one. This completes the proof of Theorem~\ref{evenprojectivity}.

\end{proof}

\subsection{Local cohomology groups and Proof of Theorem \ref{oddprojectivity}}  \label{localcohosection}

The whole of Section \ref{localcohosection} will be devoted to the proof of Theorem \ref{oddprojectivity}, which we state below.
\odprojectivity*

Let $P_G$ denote a $p$-Sylow subgroup of $G$. Let $P$ denote a subgroup of $P_G$. By Galois theory, we can identify $P$ with $\Gal{L_\infty}{F_\infty}$, for some field $F_\infty$ such that $K_\infty \subset L_\infty^{P_G} \subset F_\infty \subset L_\infty$. We have
\begin{align*}
P \cong \Gal{L_\infty}{F_\infty}.
\end{align*}
Fix a prime $\nu$ in $\Sigma_p(F_\infty)$. Suppose $\omega_1,\ldots,\omega_n$ denote all the primes in $L_\infty$ lying above the prime $\nu$ in $F$. Let $\omega(\nu)$ equal $\omega_1$. Let $P_{\omega(\nu)}$ denote the decomposition group inside $P$, corresponding to the prime $\omega(\nu)$ lying above $\nu$. We have the following isomorphism:
\begin{align*}
\prod_{\omega_i \mid \nu} H^1\left(I_{\omega_i}, \mathfrak{D}(\chi)\right)^{\Gamma_{\omega_i}} \cong \Ind_{{}_{P_{\omega(\nu)}}}^{P}\bigg( H^1\left(I_{\omega(\nu)}, \mathfrak{D}(\chi)\right)^{\Gamma_{\omega(\nu)}}\bigg)
\end{align*}
Shapiro's lemma then lets us deduce the following isomorphism:
\begin{align} \label{shapiro-iso}
 H^i\left(P, \ \prod_{\omega_i \mid \nu} H^1\left(I_{\omega_i}, \mathfrak{D}(\chi)\right)^{\Gamma_{\omega_i}}\right) \cong H^i\left(P_{\omega(\nu)}, H^1\left(I_{\omega(\nu)}, \mathfrak{D}(\chi)\right)^{\Gamma_{\omega(\nu)}} \right).
\end{align}

Corollary 3.2.3 in Greenberg's work \cite{greenberg2010surjectivity}, along with the observation that $H^1\left(I_{\omega(\nu)}, \mathfrak{D}(\chi)\right)^{\Gamma_{\omega(\nu)}}$ is a quotient of the local cohomology group $H^1\left(G_{\omega(\nu)}, \mathfrak{D}(\chi)\right)$ and the fact that $\Sigma$ contains primes above a finite prime number $\nu$ not lying above $p$, let us conclude that the map $\phi^{\Sigma_0}_{\mathfrak{D}(\chi),\text{odd}}$ is surjective. The fact that the map $\phi^{\Sigma_0}_{\mathfrak{D}(\chi),\text{odd}}$ is  surjective crucially relies on the fact that it is a global-to-local map defining the non-primitive Selmer group.  We have the following short exact sequence of $\Lambda[G]$-modules:
\begin{align*}
0 \rightarrow \Sel_{\mathfrak{D}(\chi)}(L_\infty) \rightarrow H^1\left(\Gal{K_\Sigma}{L_\infty}, \mathfrak{D}(\chi)\right) \xrightarrow {\phi^{\Sigma_0}_{\mathfrak{D}(\chi),\text{odd}}} \prod_{\eta \in \Sigma_p({L_\infty}),  } H^1\left(I_\eta,\mathfrak{D}(\chi)\right)^{\Gamma_\eta} \rightarrow 0.
\end{align*}
Let us apply the long exact sequence in Galois cohomology, for the group $P$. Lemma (\ref{odd-global}) let us obtain the following isomorphism, for each $i \geq 1$:
\begin{align} \label{first-obtain-iso}
\prod_{\eta \in \Sigma_p(L_\infty)} H^{i}\left(P,H^1(I_\eta,\mathfrak{D}(\chi))^{\Gamma_\eta}\right) \cong H^{i+1} \left(P, \  \Sel_{\mathfrak{D}(\chi)}\left(L_\infty\right)\right)
\end{align}
The product in equation (\ref{first-obtain-iso}) is indexed by the primes lying above $p$ in $L_\infty$. Equation (\ref{shapiro-iso}) allows us to rewrite the isomorphism in (\ref{first-obtain-iso}) as follows (where the product is now indexed by the primes lying above $p$ in $F_\infty$):
\begin{align} \label{compare-selmer-local}
\prod_{\nu \in \Sigma_p(F_\infty)}  H^i\left(P_{\omega(\nu)}, H^1\left(I_{\omega(\nu)}, \mathfrak{D}(\chi)\right)^{\Gamma_{\omega(\nu)}} \right) \cong H^{i+1} \left(P, \  \Sel_{\mathfrak{D}(\chi)}\left(L_\infty\right)\right), \qquad \forall \ i \geq 1.
\end{align}

Let $G_{\omega(\nu)}$ (and $G_\nu$ respectively) denote the decomposition group lying inside $\Gal{K_\Sigma}{L_\infty}$ (and  $\Gal{K_\Sigma}{F_\infty}$ respectively) corresponding to the prime $\omega(\nu)$ (and $\nu$ respectively). Let $I_{\omega(\nu)}$ (and $I_\nu$ respectively) denote the inertia subgroup inside $G_{\omega(\nu)}$ (and $G_\nu$ respectively). Let $\Gamma_{\omega(\nu)}$ (and $\Gamma_\nu$ respectively) denote the quotient $\frac{G_{\omega(\nu)}}{I_{\omega(\nu)}}$ (and $\frac{G_\nu}{I_\nu}$ respectively).  We have the following natural maps:
\begin{align} \label{natural-decompositions}
G_{\omega(\nu)} \hookrightarrow G_{\nu}, \qquad P_{\omega(\nu)} \cong \frac{G_{\nu}}{G_{\omega(\nu)}}.
\end{align}
The $p$-cohomological dimensions of both the groups $G_\nu$ and $G_{\omega(\nu)}$ equal one (see the discussion on Page 25 of \cite{greenberg2008introduction}). As a result,
\begin{align*}
H^j\left(G_{\omega(\nu)}, \mathfrak{D}(\chi)\right) = 0, \quad H^j\left(G_\nu, \mathfrak{D}(\chi) \right) =0, \quad \forall j \geq 2.
\end{align*}
An argument (similar to the one used to establish Lemma \ref{diff-maps-iso}) involving the spectral sequence
\begin{align*}
H^{i}\left(P_{\omega(\nu)}, H^j\left(G_{\omega(\nu)}, \mathfrak{D}(\chi) \right) \right) \implies H^{i+j}\left(G_\nu, \mathfrak{D}(\chi)\right)
\end{align*}
then allows us obtain the following natural isomorphism for all $i \geq 1$ (via the differential maps):
 \begin{align} \label{skip-two-iso-local}
H^{i}\left(P_{\omega(\nu)}, H^1\left(G_{\omega(\nu)}, \mathfrak{D}(\chi) \right) \right) \cong H^{i+2}\left(P_{\omega(\nu)}, H^0\left(G_{\omega(\nu)}, \mathfrak{D}(\chi) \right) \right).
\end{align}
We will now complete the proof of Theorem \ref{oddprojectivity} by considering the following cases:
\begin{itemize}
\item (Condition I) When $H^0\left(G_{\omega(\nu)}, \mathfrak{D}(\chi)\right)=0$.
\item (Condition II) When $\omega(\nu)$ is tamely ramified in the extension~$L_\infty/K_\infty$.
\item There exists a prime $\omega \in \Sigma_p(L_\infty)$ that doesn't satisfy both conditions \ref{condition1}~and~\ref{condition2}.
\end{itemize}
These cases are considered in Sections \ref{first-odd-case}, \ref{second-odd-case} and \ref{third-odd-case}. The headings in each of these sections highlight the assumption which we will be working with.

\subsubsection{When $H^0\left(G_{\omega(\nu)}, \mathfrak{D}(\chi)\right)=0$}\label{first-odd-case} \mbox{}

Consider the following inflation-restriction short exact sequence that is $P_{\omega(\nu)}$-equivariant:
 \begin{align*}
0 \rightarrow H^1\bigg(\Gamma_{\omega(\nu)},H^0\left(I_{\omega(\nu)}, \mathfrak{D}(\chi)\right)\bigg) \rightarrow H^1\left(G_{\omega(\nu)}, \mathfrak{D}(\chi)\right) \rightarrow H^1\left(I_{\omega(\nu)}, \mathfrak{D}(\chi)\right)^{\Gamma_{\omega(\nu)}}\rightarrow 0.
\end{align*}
We first claim that $H^1\bigg(\Gamma_{\omega(\nu)},H^0\left(I_{\omega(\nu)}, \mathfrak{D}(\chi)\right)\bigg)$ equals zero.  It suffices to prove the claim when $H^0\left(I_{\omega(\nu)}, \mathfrak{D}(\chi)\right) \neq 0$. The reasoning for this is similar to the one given in Observation \ref{non-trivial-residual-obs}. If $H^0\left(I_{\omega(\nu)}, \mathfrak{D}(\chi)\right) \neq 0$, then $$H^0\left(I_{\omega(\nu)}, \mathfrak{D}(\chi)\right) \cong \mathfrak{D}(\chi)= \frac{\Q_p(\chi)}{\Z_p(\chi)}.$$
The group $\Gamma_{\omega(\nu)}$ is topologically generated by Frobenius (denoted by $\Frob_{\omega(\nu)}$). We have the following exact sequence:
{\footnotesize
\begin{align*}
0 \rightarrow  H^1\bigg(\Gamma_{\omega(\nu)}, H^0\left(I_{\omega(\nu)},\mathfrak{D}(\chi)\right)\bigg)^\vee \rightarrow \underbrace{H^0\left(I_{\omega(\nu)}, \mathfrak{D}(\chi)\right)^\vee}_{\Z_p\left(\chi^{-1}\right)} \xrightarrow {\Frob_{\omega(\nu)}-1}\underbrace{H^0\left(I_{\omega(\nu)}, \mathfrak{D}(\chi)\right)^\vee}_{\Z_p\left(\chi^{-1}\right)} \rightarrow   \underbrace{H^0\left(G_{\omega(\nu)}, \mathfrak{D}(\chi) \right)^\vee}_{=0} \rightarrow 0.
\end{align*}}
A surjective endomorphism of a finitely generated free $\Z_p$-module must be an isomorphism. We can now conclude that {\small $H^1\bigg(\Gamma_{\omega(\nu)}, H^0\left(I_{\omega(\nu)},\mathfrak{D}(\chi)\right)\bigg)=0$}. So, we have the following isomorphism that is $P_{\omega(\nu)}$-equivariant: \begin{align}
H^1\left(G_{\omega(\nu)}, \mathfrak{D}(\chi)\right) \cong H^1\left(I_{\omega(\nu)}, \mathfrak{D}(\chi)\right)^{\Gamma_{\omega(\nu)}}.
\end{align}
This lets us obtain the following isomorphism for all $i \geq 0$:

\begin{align} \label{skip-two-first-case}
H^i\bigg(P_{\omega(\nu)}, \ H^1\left(G_{\omega(\nu)}, \mathfrak{D}(\chi)\right)\bigg) \cong H^i\left(P_{\omega(\nu)}, \ H^1\left(I_{\omega(\nu)}, \mathfrak{D}(\chi)\right)^{\Gamma_{\omega(\nu)}}\right).
\end{align}

Equation (\ref{skip-two-iso-local}) lets us conclude that
 \begin{align}\label{zero-first-case-odd} H^{i}\left(P_{\omega(\nu)}, H^1\left(G_{\omega(\nu)}, \mathfrak{D}(\chi) \right) \right) \cong H^{i+2}\left(P_{\omega(\nu)}, H^0\left(G_{\omega(\nu)}, \mathfrak{D}(\chi) \right) \right) = 0, \qquad \forall \ i \geq 1.
\end{align}

Combining (\ref{skip-two-first-case}), (\ref{zero-first-case-odd}), we obtain the following equality for all $i \geq 1$:
 \begin{align} \label{main-comp-first-case}
H^i\left(P_{\omega(\nu)}, \ H^1\left(I_{\omega(\nu)}, \mathfrak{D}(\chi)\right)^{\Gamma_{\omega(\nu)}}\right) & \cong H^{i}\left(P_{\omega(\nu)}, H^1\left(G_{\omega(\nu)}, \mathfrak{D}(\chi) \right) \right) \\ & \notag \cong H^{i+2}\left(P_{\omega(\nu)}, H^0\left(G_{\omega(\nu)}, \mathfrak{D}(\chi) \right) \right) = 0.
\end{align}

\subsubsection{When $\omega(\nu)$ is tamely ramified in the extension~$L_\infty/K_\infty$}\label{second-odd-case} \mbox{}

We would like to start with the following observation. The Galois group $\Gal{L_\infty}{F_\infty}$ is a $p$-group. If $\omega(\nu)$ is tamely ramified in the extension $L_\infty/K_\infty$, then prime  $\omega(\nu)$ in $L_\infty$ (lying over the prime $\nu$ in $F_\infty$) must remain unramified in the extension $L_\infty/F_\infty$. Therefore,  in addition to the maps in (\ref{natural-decompositions}), we have the natural isomorphisms:
\begin{align*}
I_{\omega(\nu)} \stackrel{\cong}{\hookrightarrow} I_{\nu}, \qquad \Gamma_\nu \cong \frac{\Gamma_\omega}{P_\omega}.
\end{align*}
Note that the $p$-cohomological dimensions of the groups $\Gamma_{\omega(\nu)}$ and $\Gamma_\nu$ equal one (these groups are isomorphic to $\hat{\Z}$).  For any $\Gamma_\nu$-module $\mathcal{M}$, an argument involving the spectral sequence (which is similar to the one used to establish  Lemma \ref{diff-maps-iso}),
\begin{align*}
H^{i}\bigg(P_{\omega(\nu)}, H^j\left(\Gamma_{\omega(\nu)}, \mathcal{M} \right) \bigg) \implies H^{i+j} \left( \Gamma_\nu,  \mathcal{M} \right)
\end{align*}
allows us obtain the following natural isomorphism (via the differential maps):
\begin{align} \label{skip-two-iso-local-second}
H^{i}\bigg(P_{\omega(\nu)}, H^1\left(\Gamma_{\omega(\nu)},  \mathcal{M} \right) \bigg) \cong H^{i+2}\bigg(P_{\omega(\nu)}, H^0\left(\Gamma_{\omega(\nu)}, \mathcal{M} \right) \bigg), \qquad \forall \ i \geq 1.
\end{align}
Furthermore, the $p$-cohomological dimension of the inertia group $I_{\omega(\nu)}$ (in addition to the decomposition group $G_{\omega(\nu)}$ and the quotient group $\Gamma_{\omega(\nu)}$) also equals one. See the discussion on Page 25 of \cite{greenberg2008introduction}. Analysing the spectral sequence
\begin{align*}
H^i\bigg(\Gamma_{\omega(\nu)},H^j\left(I_{\omega(\nu)}, \mathfrak{D}(\chi)\right)\bigg) \implies H^{i+j} \left( G_{\omega(\nu)},  \mathfrak{D}(\chi)\right)\end{align*}
now lets us deduce that
\begin{align} \label{brauer-group-zero}
H^1\bigg(\Gamma_{\omega(\nu)}, H^1\left(I_{\omega(\nu)},\mathfrak{D}(\chi)\right) \bigg) =0. \end{align}
To deduce equation (\ref{brauer-group-zero}), we do not use the condition that $\omega(\nu)$ is tamely ramified in the extension~$L_\infty/K_\infty$.

Let $\mathcal{M}$ equal $H^1\left(I_{\omega(\nu)}, \mathfrak{D}(\chi)\right)$. Combining (\ref{skip-two-iso-local-second}) and (\ref{brauer-group-zero}), we obtain the following equality for all $i \geq 3$:
\begin{align} \label{main-comp-second-case}
H^i\left(P_{\omega(\nu)}, \ H^1\left(I_{\omega(\nu)}, \mathfrak{D}(\chi)\right)^{\Gamma_{\omega(\nu)}}\right) \cong H^{i-2}\left(P_{\omega(\nu)}, H^1\left(\Gamma_{\omega(\nu)}, \  H^1\left(I_{\omega(\nu)}, \mathfrak{D}(\chi)\right)\right)\right) =0.
\end{align}
The arguments in Sections \ref{first-odd-case} and \ref{second-odd-case} have the following implications towards Theorem~\ref{oddprojectivity}. Suppose every prime $\omega$ in $\Sigma_p(L_\infty)$ satisfies one of the following conditions:
\begin{enumerate}[label=(\Roman*), ref=\Roman*]
\item\label{condition1} $H^0\left(G_\omega, \mathfrak{D}(\chi)\right)=0$
\item\label{condition2} $\omega$ is tamely ramified in the extension $L_\infty/K_\infty$.
\end{enumerate}
Combining (\ref{compare-selmer-local}), (\ref{main-comp-first-case}) and (\ref{main-comp-second-case}), we obtain the following equality for all $i \geq 4$:
\begin{align}
H^{i} \left(P, \  \Sel_{\mathfrak{D}(\chi)}\left(L_\infty\right)\right) \cong \prod_{\nu \in \Sigma_p(F_\infty)}  H^{i-1}\left(P_{\omega(\nu)}, H^1\left(I_{\omega(\nu)}, \mathfrak{D}(\chi)\right)^{\Gamma_{\omega(\nu)}} \right) =0.
\end{align}
As a result, Proposition \ref{greenberg-freeness} now lets us conclude that the $\Lambda[G]$-module $\X$ has a free resolution of length one if every prime $\omega \in \Sigma_p(L_\infty)$ satisfies Condition \ref{condition1} or \ref{condition2}.

\subsubsection{There exists a prime $\omega \in \Sigma_p(L_\infty)$ that doesn't satisfy both conditions \ref{condition1}~and~\ref{condition2}}\label{third-odd-case} \mbox{}

For the rest of this section, we will work under the assumption given in the heading of this subsection. Under this assumption, we will establish that the $\Lambda[G]$-module $\X$ does not have a free resolution of length one.

Since the prime $\omega$ doesn't satisfy Condition \ref{condition1}, we have $H^0\left(G_\omega, \mathfrak{D}(\chi)\right)\neq 0$. As we argued earlier (see Observation \ref{non-trivial-residual-obs}), this leads us to assert that the restriction of the character $\chi$ to $G_{\omega}$ is trivial and that
 \begin{align} \label{third-case-trivial-action}
H^0\left(I_\omega, \mathfrak{D}(\chi)\right) \cong H^0\left(G_\omega, \mathfrak{D}(\chi)\right) \cong \frac{\Q_p}{\Z_p}.\end{align}
Furthermore, since the prime $\omega$ doesn't satisfy Condition \ref{condition2}, the prime $\omega$ is wildly ramified in the extension $L_\infty/K_\infty$. As a result, $p$ divides the order of the inertia subgroup inside $\Gal{L_\infty}{K_\infty}$, corresponding to the prime $\omega$. We choose $P$ to be a cyclic subgroup of order $p$ inside this inertia subgroup (this is possible due to Cauchy's theorem). As we did in the earlier sections, we will identify $P$ with $\Gal{L_\infty}{F_\infty}$, for some field $F_\infty$ satisfying $K_\infty \subset F_\infty \subset L_\infty$. We let $\nu$ denote the prime in $F_\infty$ lying below $\omega$. To be consistent with our notations in the earlier sections, we let $\omega(\nu)$ denote the prime $\omega$. We have the following natural maps:
\begin{align*}
P_{\omega(\nu)} \stackrel{=}{\hookrightarrow} P \cong \frac{\Z}{p\Z}
\end{align*}
Since the prime $\omega(\nu)$ is totally ramified in the extension $L_\infty/F_\infty$, we have a natural  inclusion of inertia groups $I_{\omega(\nu)} \hookrightarrow I_\nu$. We also have the following natural isomorphisms:
 \begin{align}
 P_{\omega(\nu)} \cong \frac{I_\nu}{I_\omega(\nu)}, \qquad \Gamma_{\omega(\nu)} \cong \Gamma_\nu.
\end{align}
The short exact sequences
 \begin{align*}
0 \rightarrow \underbrace{\frac{I_{\nu}}{I_{\omega(\nu)}}}_{\cong P_{\omega(\nu)}} \rightarrow \frac{G_{\nu}}{I_{\omega(\nu)}} \rightarrow \underbrace{\frac{G_{\nu}}{I_{\nu}}}_{\cong \Gamma_{\nu}} \rightarrow 0, \qquad 0 \rightarrow \underbrace{\frac{G_{\omega(\nu)}}{I_{\omega(\nu)}}}_{\cong \Gamma_{\omega(\nu)}} \rightarrow \frac{G_{\nu}}{I_{\omega(\nu)}} \rightarrow \underbrace{\frac{G_{\nu}}{G_{\omega(\nu)}}}_{\cong P_{\omega(\nu)}} \rightarrow 0, \end{align*}
allow us to view both $P_{\omega(\nu)}$ and $\Gamma_{\omega(\nu)}$ as normal subgroups of $\frac{G_{\nu}}{I_{\omega(\nu)}}$. As a result,
 \begin{align*}
\frac{G_{\nu}}{I_{\omega(\nu)}} \cong P_{\omega(\nu)} \times \Gamma_{\omega(\nu)}.
\end{align*}
Equation (\ref{brauer-group-zero}) tells us that {\small $
H^1\bigg(\Gamma_{\omega(\nu)}, H^1\left(I_{\omega(\nu)},\mathfrak{D}(\chi)\right) \bigg) =0$}. So, we have the following exact sequence that is $P_{\omega(\nu)} \times \Gamma_{\omega(\nu)}$-equivariant:
\begin{align*}
0 \rightarrow H^0\bigg(\Gamma_{\omega(\nu)}, H^1\left(I_{\omega(\nu)},\mathfrak{D}(\chi)\right) \bigg) \rightarrow H^1\left(I_{\omega(\nu)},\mathfrak{D}(\chi)\right) \xrightarrow {\Frob_{\omega(\nu)}-1} H^1\left(I_{\omega(\nu)},\mathfrak{D}(\chi)\right) \rightarrow 0.
\end{align*}
Consider the following long exact sequence in group cohomology  (for the group $P_{\omega(\nu)}$):
{\footnotesize \begin{align} \label{long-exact-seq}
\ldots \rightarrow  H^i\bigg(P_{\omega(\nu)}, H^1\left(I_{\omega(\nu)},\mathfrak{D}(\chi)\right)^{\Gamma_{\omega(\nu)}} \bigg) \rightarrow H^i\bigg(P_{\omega(\nu)}, H^1\left(I_{\omega(\nu)},\mathfrak{D}(\chi)\right)\bigg) \xrightarrow {\Frob_{\omega(\nu)}-1} H^i\bigg(P_{\omega(\nu)},H^1\left(I_{\omega(\nu)},\mathfrak{D}(\chi)\right)\bigg) \rightarrow \ldots
\end{align}}
For each $i \geq 1$, we have the following commutative diagram:
 \begin{align} \label{comm-diagram}
\xymatrix{
 H^i\bigg(P_{\omega(\nu)}, H^1\left(I_{\omega(\nu)},\mathfrak{D}(\chi)\right)\bigg) \ar[d]^{\cong}\ar[rr]^{ \Frob_{\omega(\nu)}-1}&&  H^i\bigg(P_{\omega(\nu)}, H^1\left(I_{\omega(\nu)},\mathfrak{D}(\chi)\right) \bigg)  \ar[d]^{\cong }\\
H^{i+2}\bigg(P_{\omega(\nu)}, H^0\left(I_{\omega(\nu)},\mathfrak{D}(\chi)\right)\bigg) \ar[rr]^{\Frob_{\omega(\nu)}-1} && H^{i+2}\bigg(P_{\omega(\nu)}, H^0\left(I_{\omega(\nu)},\mathfrak{D}(\chi)\right)\bigg)
}
\end{align}
Equation (\ref{third-case-trivial-action}) tells us that $H^0\left(I_{\omega(\nu)},\mathfrak{D}(\chi)\right) \cong \frac{\Q_p}{\Z_p}$. Note that the group $\Gamma_{\omega(\nu)}$, which is topologically generated by $\Frob_{\omega(\nu)}$, acts trivially on both $P_{\omega(\nu)}$ and $\frac{\Q_p}{\Z_p}$. As a result, for each $i \geq 1$, the horizontal maps in the commutative diagram (\ref{comm-diagram}) (given by $\Frob_{\omega(\nu)}-1$) are the zero maps. Since $P_{\omega(\nu)}$ is a cyclic group of order $p$, for all $i \geq 1$,
 \begin{align*}
H^{2i-1}\left(P_{\omega(\nu)}, \frac{\Q_p}{\Z_p}\right) \cong \Hom\left(P_{\omega(\nu)}, \frac{\Q_p}{\Z_p}\right) \cong \frac{\Z}{p\Z}, \qquad H^{2i}\left(P_{\omega(\nu)}, \frac{\Q_p}{\Z_p}\right) = 0.
\end{align*}
Combining these observations pertaining to the long exact sequence (\ref{long-exact-seq}) and the commutative diagram (\ref{comm-diagram}) with the fact that $P_{\omega(\nu)}$ is a cyclic group leads us to conclude that
\begin{align*}
H^i\left(P_{\omega(\nu)}, H^1\left(I_{\omega(\nu)}, \mathfrak{D}(\chi)\right)^{\Gamma_{\omega(\nu)}} \right) \cong \frac{\Z}{p\Z}, \qquad \forall \ i \geq 1.
\end{align*}
Equation (\ref{compare-selmer-local}) now let us deduce that for all $i\geq 2$, we have the following surjection:
\begin{align*}
H^{i} \left(P, \  \Sel_{\mathfrak{D}(\chi)}\left(L_\infty\right)\right) \twoheadrightarrow \frac{\Z}{p\Z}.
\end{align*}
As a result, Proposition \ref{greenberg-freeness} lets us make the following deduction: the $\Lambda[G]$-module $\X$ does not have a free resolution of length one if there exists a prime $\omega \in \Sigma_p(L_\infty)$ that does not satisfiy both Condition \ref{condition1} and \ref{condition2}.

Our observations in Sections \ref{first-odd-case}, \ref{second-odd-case} and \ref{third-odd-case} now complete the proof of Theorem \ref{oddprojectivity}.

\section{Elliptic curve with a cyclic $p^2$ isogeny and proof of Theorem \ref{elliptic-curves}} \label{ellipticcurves-section}

\subsection{Setup of Theorem \ref{elliptic-curves}}
\mbox{}

Let $E$ be an elliptic curve defined over $\Q$ with good ordinary or split multiplicative reduction at $p$. Let $\Phi : E \rightarrow E'$ be a cyclic isogeny, defined over $\Q$, of degree $p^2$. Let $\tilde{\Phi} : E' \rightarrow E$ denote the dual isogeny (which is also a cyclic isogeny over $\Q$ of degree $p^2$). Let us enlarge the set $\Sigma$ to contain all the primes of bad reduction for $E$. The non-primitive Selmer group $\Sel_{E[p^\infty]}(\Q_\infty)$ associated to $E$ (introduced in  the work of Greenberg and Vatsal \cite{greenberg2000iwasawa}) is given below:
\begin{align*}
\Sel_{E[p^\infty]}(\Q_\infty) &= \ker\bigg( H^1\left(\Gal{\Q_\Sigma}{\Q_\infty},E[p^\infty]\right) \rightarrow H^1(I_\eta,\mathfrak{A})^{G_\eta/I_\eta} \bigg).
\end{align*}
Here, $G_\eta$ and $I_\eta$ denote the decomposition and inertia subgroup for the unique prime $\eta$ in $\Q_\infty$ lying above $p$. If $E$ has good reduction at $p$  then we let $\mathfrak{A}$ equal the $\Gal{\overline{\Q}_p}{\Q_p}$-module $\overline{E}[p^\infty]$, the $p$-power torsion points on the reduced elliptic curve $\overline{E}$. If $E$ has split multiplicative reduction at $p$, then $\mathfrak{A}$ is defined to be $\Q_p/\Z_p$ with the trivial action of $\Gal{\overline{\Q}_p}{\Q_p}$. \\

Both $\ker(\Phi)$ and $\ker(\tilde{\Phi})$ are cyclic groups of order $p^2$. Without loss of generality, we shall henceforth assume that the action of $\Gal{\Q_\Sigma}{\Q}$ on $\ker(\Phi)$ is even (otherwise we could simply consider the curve $E'$ and the dual isogeny). We have the following natural characters:
\begin{align*}
& \phi : \Gal{\Q_\Sigma}{\Q} \rightarrow \mathrm{Aut}\left(\ker(\Phi)\right) \cong \left(\Z/p^2\Z\right)^\times, \qquad && \tilde{\phi}: \Gal{\Q_\Sigma}{\Q} \rightarrow \mathrm{Aut}\left(\ker\left(\tilde{\Phi}\right)\right) \cong \left(\Z/p^2\Z\right)^\times \\
& \chi_\phi : \Gal{\Q_\Sigma}{\Q} \rightarrow \mathrm{Aut}\left(\ker(\Phi)[p]\right) \cong \left(\Z/p\Z\right)^\times, \qquad && \chi_{\tilde{\phi}} : \Gal{\Q_\Sigma}{\Q} \rightarrow \mathrm{Aut}\left(\ker\left(\tilde{\Phi}\right)[p]\right) \cong \left(\Z/p\Z\right)^\times,
\end{align*}

The Weil pairing gives us the following equality of characters:
\begin{align}\label{weil-pairing}
\phi \tilde{\phi} = \chi_p \ (\text{mod } p^2), \qquad \chi_\phi \chi_{\tilde{\phi}}= \chi_p \ (\text{mod } p).
\end{align}
Here $\chi_p:\Gal{\Q_\Sigma}{\Q} \rightarrow \Z_p^\times$ denotes the $p$-adic cyclotomic~character. Consider the following fields and the associated Galois groups:
\begin{align*}
& \Q_{\phi} :=\overline{\Q}^{\ker(\phi)}, \quad  && \Q_{\tilde{\phi}} :=\overline{\Q}^{\ker(\tilde{\phi})}, \quad && \Q_{\chi_\phi} :=\overline{\Q}^{\ker(\chi_\phi)}, \quad && \Q_{\chi_{\tilde{\phi}}} :=\overline{\Q}^{\ker(\chi_{\tilde{\phi}})}.\\
&   &&  && \Delta_{\chi_\phi} :=\Gal{\Q_{\chi_\phi}}{\Q}, \quad && \Delta_{\chi_{\tilde{\phi}}} :=\Gal{\Q_{\chi_{\tilde{\phi}}}}{\Q}.
\end{align*}

Let $G_\phi$ and $G_{\tilde{\phi}}$ denote the $p$-Sylow subgroups of $\Gal{\Q_{\phi}}{\Q}$ and $\Gal{\Q_{\tilde{\phi}}}{\Q}$ respectively. Note that the groups $G_\phi$ and $G_{\tilde{\phi}}$ are of order dividing $p$. Also note that the groups $\Delta_{\chi_\phi}$ and $\Delta_{\chi_{\tilde{\phi}}}$ are of order dividing $p-1$. One can view $G_\phi$ and $G_{\tilde{\phi}}$  as quotients of $\Gal{\Q_{\phi}}{\Q}$ and $\Gal{\Q_{\tilde{\phi}}}{\Q}$, respectively, as well. We can consider the following field diagrams:
\begin{center}
\begin{tikzpicture}[node distance = 1cm, auto]
      \node (Q) {$\Q$};
      \node (Qpiep) [above of=Q, left of=K] {$L_\phi$};
      \node (Qdelep) [above of=Q, right of=K] {$\Q_{\chi_\phi}$};
      \node (Qep) [above of=K, node distance = 2cm] {$\Q_{\phi}$};
      \draw[-] (Q) to node {$G_\phi$} (Qpiep);
      \draw[-] (Q) to node [swap] {$\Delta_{\chi_\phi}$} (Qdelep);
      \draw[-] (Qpiep) to node {$\Delta_{\chi_\phi}$} (Qep);
      \draw[-] (Qdelep) to node [swap] {$G_\phi$} (Qep);

\node (Q2)  [right of = Q, node distance = 5cm] {$\Q$};
      \node (Qpiphi) [above of=Q2, left of = Q2] {$L_{\tilde{\phi}}$};
      \node (Qdelphi) [above of=Q2, right of=Q2] {$\Q_{\chi_{\tilde{\phi}}}$};
      \node (Qphi) [above of=Q2, node distance = 2cm] {$\Q_{\tilde{\phi}}$};
      \draw[-] (Q2) to node {$G_{\tilde{\phi}}$} (Qpiphi);
      \draw[-] (Q2) to node [swap] {$\Delta_{\chi_{\tilde{\phi}}}$} (Qdelphi);
      \draw[-] (Qpiphi) to node {$\Delta_{\chi_{\tilde{\phi}}}$} (Qphi);
      \draw[-] (Qdelphi) to node [swap] {$G_{\tilde{\phi}}$} (Qphi);

      \node (Iso) [right of=Qphi, node distance = 5cm] {   $G_\phi \cong \Gal{L_\phi}{\Q}$.};

      \node (Iso2) [below of=Iso, node distance = 1cm] {   $G_{\tilde{\phi}} \cong \Gal{L_{\tilde{\phi}}}{\Q}$.};

     \end{tikzpicture}
  \end{center}
We will need to impose the following condition (similar to (\ref{field-condn})):
\begin{enumerate}[label=(Non-DG),ref=(Non-DG)]
\item\label{Non-DG} $L_{\phi} \cap \Q_\infty = \Q$.
\end{enumerate}
As we shall see in Corollary \ref{intersection-non-degenerate}, the condition given in (\ref{field-condn}) is automatically satisfied for $L_{\tilde{\phi}}$. That is, $L_{\tilde{\phi}} \cap \Q_\infty = \Q$. To verify \ref{Non-DG} in practice, one can use, for example, Velu's formula (\cite{MR0294345}) to explicitly find a generating polynomial for the field extension $L_\phi$ over $\Q$. \\

For our application, we shall consider the Selmer groups $\Sel_{\mathfrak{D}(\chi_\phi)}(L_{\phi,\infty})$ and $\Sel_{\mathfrak{D}(\chi_{\tilde{\phi}})}(L_{\tilde{\phi},\infty})$ whose Pontryagin duals are finitely generated torsion modules over the completed group rings  $\Lambda[G_\phi]$ and $\Lambda[G_{\tilde{\phi}}]$ respectively. The Pontryagin dual of these Selmer groups are non-primitive Iwasawa modules; they can be denoted by $\X_{\mathfrak{D}(\chi_\phi)}(L_{\phi,\infty})$ and $\X_{\mathfrak{D}(\chi_{\tilde{\phi}})}(L_{\tilde{\phi},\infty})$, following the notations of the introduction. For this section, we will  use the notation involving the Selmer group (instead of using the notations involving $\X$). \\

The restriction of the group homomorphisms
\begin{align}\phi \mid_{G_\phi} : G_{\phi} \rightarrow \left(\Z/p^2\Z\right)^\times, \qquad  \tilde{\phi} \mid_{G_{\tilde{\phi}}} : G_{\tilde{\phi}} \rightarrow \left(\Z/p^2\Z\right)^\times
\end{align}
allow us to consider the following ring homomorphisms:
\begin{align}\label{groupring-homo}
\sigma_{\phi} : \Lambda[G_\phi] \rightarrow \left(\frac{\Z}{p^2\Z}\right)[[\Gamma]] \cong \frac{\Lambda}{(p^2)}, &  \qquad \sigma_{\tilde{\phi}} : \Lambda[G_{\tilde{\phi}}] \rightarrow \left(\frac{\Z}{p^2\Z}\right)[[\Gamma]] \cong \frac{\Lambda}{(p^2)}.
\end{align}

As stated above, we shall work with the following assumption:
\begin{center}
The even character $\chi_\phi$ is ramified at $p$.
\end{center}
Keep in mind that the elliptic curve $E$ has either good ordinary reduction or split multiplicative reduction at $p$. As a result, the semi-simplification of the $\mathbb{F}_p[I_p]$-representation $E[p]$ is a sum of two distinct characters; one of which coincides with the Teichm\"{u}ller character (which is ramified at $p$) and one of which is trivial. Here, $E[p]$ denotes the $p$-power torsion points on the elliptic curve $E$ and $I_p$ denotes the inertia subgroup of $\Gal{\overline{\Q}_p}{\Q_p}$. So, our assumption on the even character is valid if and only if we place the following assumption on the odd character:
\begin{center}
The odd character $\chi_{\tilde{\phi}}$ is unramified at $p$.
\end{center}

\begin{lemma}\label{kernel-isogenies-ramification}
The action of $\Gal{\overline{\Q}_p}{\Q_p}$ on $\ker\left(\Phi\right)$ is totally ramified. The action of $\Gal{\overline{\Q}_p}{\Q_p}$ on $\ker\left(\tilde{\Phi}\right)$ is unramified. In fact, we have the following isomorphism of $\frac{\Z}{p^2\Z}[\Gal{\overline{\Q}_p}{\Q_p}]$-modules:
\begin{align*}
\mathfrak{A}[p^2] \cong \ker\left(\tilde{\Phi}\right).
\end{align*}
\end{lemma}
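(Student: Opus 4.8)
The plan is to analyze the $\mathbb{F}_p[\Gal{\overline{\Q}_p}{\Q_p}]$-structure of $E[p]$ first, and then bootstrap to the level of $p^2$ using the Weil pairing. Since $E$ has good ordinary or split multiplicative reduction at $p$, the $\Gal{\overline{\Q}_p}{\Q_p}$-module $E[p^\infty]$ sits in a short exact sequence $0 \to \mathfrak{C} \to E[p^\infty] \to \mathfrak{A} \to 0$, where $\mathfrak{C}$ is the kernel of reduction (on which the inertia group $I_p$ acts through the Teichm\"uller character $\omega$, hence ramifiedly) and $\mathfrak{A}$ is as defined in the text (on which $I_p$ acts trivially: unramified). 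Taking $p$-torsion, one sees that the semisimplification of $E[p]$ as an $\mathbb{F}_p[I_p]$-module is $\omega \oplus \mathbf{1}$, and these two characters are distinct since $p$ is odd. The cyclic subgroup $\ker(\Phi)[p]$ of $E[p]$, being $\Gal{\overline{\Q}_p}{\Q_p}$-stable, must therefore be one of the two Jordan--H\"older constituents of $E[p]$ as an $I_p$-module; our running assumption that $\chi_\phi$ is ramified at $p$ forces $\ker(\Phi)[p]$ to be the ramified line, i.e. $\chi_\phi\mid_{I_p} = \omega\mid_{I_p}$, and consequently $\ker(\Phi)[p]$ coincides with $\mathfrak{C}[p]$ inside $E[p]$.

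Next I would promote this to the full group $\ker(\Phi)$ of order $p^2$. Because $\mathfrak{C}$ is a $\Z_p$-divisible group with $\mathfrak{C}[p] = \ker(\Phi)[p]$ and $\ker(\Phi) \cong \Z/p^2\Z$ is the unique subgroup of $E[p^2]$ of that order containing $\ker(\Phi)[p]$ and contained in $\mathfrak{C}$, we get $\ker(\Phi) = \mathfrak{C}[p^2]$; since $I_p$ acts on all of $\mathfrak{C}$ through a ramified character (it injects, after restriction to $\mathfrak{C}[p^\infty] = \mathfrak{C}$, into $\Z_p^\times$ via a character whose reduction mod $p$ is $\omega$), the action on $\ker(\Phi)$ is totally ramified, which is the first assertion. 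For the dual isogeny, use the Weil pairing identity $\phi\tilde\phi = \chi_p \pmod{p^2}$ from equation (\ref{weil-pairing}): the perfect $\Gal{\overline{\Q}_p}{\Q_p}$-equivariant pairing $E[p^2] \times E[p^2] \to \mu_{p^2}$ restricts to a perfect pairing $\ker(\Phi) \times \ker(\tilde\Phi) \to \mu_{p^2}$, whence $\ker(\tilde\Phi) \cong \Hom(\ker(\Phi), \mu_{p^2})$ as $\Gal{\overline{\Q}_p}{\Q_p}$-modules. Since $\mu_{p^2}$ has the cyclotomic action and $\ker(\Phi)$ has the totally ramified action $\phi$ with $\phi\mid_{I_p} = \chi_p\mid_{I_p} \pmod{p^2}$ (the reduction being $\omega$, and $\chi_p$ itself restricting to $I_p$ with Teichm\"uller reduction — here one uses that a ramified character into $(\Z/p^2\Z)^\times$ whose reduction is $\omega$ and whose product with $\chi_p$ is $\chi_p$ must be $\chi_p$ itself), the character $\tilde\phi = \chi_p \phi^{-1}$ is unramified. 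Finally, $\mathfrak{A}[p^2]$ is unramified of order $p^2$ inside the quotient $E[p^\infty]/\mathfrak{C}$, and the composite $\ker(\tilde\Phi) \hookrightarrow E[p^2] \twoheadrightarrow E[p^2]/\mathfrak{C}[p^2] = \mathfrak{A}[p^2]$ is an isomorphism of $\frac{\Z}{p^2\Z}[\Gal{\overline{\Q}_p}{\Q_p}]$-modules because both sides are unramified cyclic of order $p^2$ and the map is nonzero on $p$-torsion (since $\ker(\tilde\Phi) \cap \mathfrak{C} = \ker(\tilde\Phi)[p] \cap \ker(\Phi)[p] = 0$, the two lines being distinct).

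The main obstacle I anticipate is pinning down precisely why the ramified character $\phi$ (resp. the unramified conclusion for $\tilde\phi$) is forced on the nose rather than just up to the ambiguity of characters of $(\Z/p^2\Z)^\times$: one must argue carefully that a character $\psi: I_p \to (\Z/p^2\Z)^\times$ with $\psi \bmod p = \omega\bmod p$ and $\psi \cdot (\chi_p \bmod p^2)\mid_{I_p}$ unramified (equivalently $\psi = \chi_p\mid_{I_p} \bmod p^2$) is indeed ramified and that its "complement" is genuinely unramified — this rests on the structure of the reduction exact sequence for $E$ at $p$ and the fact that the canonical subgroup / connected-\'etale sequence of $E[p^2]$ over $\Z_p$ is split on the special fiber only up to an unramified part, so care is needed to identify $\ker(\Phi)$ with the connected part $\mathfrak{C}[p^2]$ rather than merely with a subgroup reducing correctly mod $p$. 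I would handle this by invoking the uniqueness of the connected-\'etale filtration of the finite flat group scheme $E[p^2]_{/\Z_p}$: $\ker(\Phi)$, being killed by $p^2$ and ramified mod $p$, has nontrivial connected part, and by order considerations equals the full connected part $\mathfrak{C}[p^2]$, which settles both the total ramification of $\ker(\Phi)$ and, via Cartier/Weil duality, the unramifiedness of $\ker(\tilde\Phi)$ simultaneously.
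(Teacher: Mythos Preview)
Your proposal is essentially correct and reaches the same conclusions, but it runs in the reverse order from the paper and uses heavier tools to close the one gap you flag. The paper argues directly that the quotient $\mathfrak{A}[p^2]$ of $E[p^2]$, being cyclic of order $p^2$ and unramified on $p$-torsion, must coincide with $\ker(\tilde{\Phi})$ among the two filtration pieces $\ker(\Phi)$ and $\ker(\tilde{\Phi})$ of $E[p^2]$; total ramification of $\ker(\Phi)$ is then read off from the Weil-pairing identity $\phi\tilde{\phi}=\chi_p\pmod{p^2}$. You instead first try to pin down $\ker(\Phi)=\mathfrak{C}[p^2]$ and then deduce the rest.

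The step you correctly identify as delicate --- passing from $\ker(\Phi)[p]=\mathfrak{C}[p]$ to $\ker(\Phi)=\mathfrak{C}[p^2]$ --- can be closed without the connected--\'etale sequence, and it is worth doing so because that sequence is not available when $E$ has split multiplicative reduction (there is no finite flat model of $E[p^2]$ over $\Z_p$). The purely Galois-theoretic fix, which works uniformly in both reduction types and is implicit in the paper's ``semi-simplification'' remark, is this: if $\ker(\Phi)\neq\mathfrak{C}[p^2]$ then the composite $\ker(\Phi)\hookrightarrow E[p^2]\twoheadrightarrow\mathfrak{A}[p^2]$ has image of order $p$ and kernel $\ker(\Phi)[p]$, so induces an isomorphism $\ker(\Phi)/\ker(\Phi)[p]\cong\mathfrak{A}[p]$; but the left side carries the ramified character $\chi_\phi$ while the right side is unramified, a contradiction. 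This is exactly the ``order considerations'' step buried in your connected--\'etale paragraph, promoted to a self-contained argument. Once you insert this, your route and the paper's are the same argument read in opposite directions.
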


\begin{proof}
Observe that the semi-simplification of the $\frac{\Z}{p^2\Z}[\Gal{\overline{\Q}_p}{\Q_p}]$-module  $E[p^2]$ is the direct sum
\begin{align*}
\ker(\Phi) \oplus \ker\left(\tilde{\Phi}\right).
\end{align*}
Our assumptions tell us that the action of $\Gal{\overline{\Q}_p}{\Q_p}$ on $\ker\left(\tilde{\Phi}\right)[p]$ is unramified and the action of $\Gal{\overline{\Q}_p}{\Q_p}$ on $\ker(\Phi)[p]$ is ramified. The natural surjection $E[p^2] \twoheadrightarrow \mathfrak{A}[p^2]$ of free $\frac{\Z}{p^2\Z}$-modules, that is $\Gal{\overline{\Q}_p}{\Q_p}$-equivariant, tells us that $\mathfrak{A}[p^2]$ is one of the components appearing in the semi-simplification of $E[p^2]$ over $\frac{\Z}{p^2\Z}[\Gal{\overline{\Q}_p}{\Q_p}]$. Since the action of $\Gal{\overline{\Q}_p}{\overline{\Q}_p}$ on $\mathfrak{A}[p]$ is unramified, we must have the following isomorphism of $\frac{\Z}{p^2\Z}[\Gal{\overline{\Q}_p}{\Q_p}]$-modules:
\begin{align*}
\mathfrak{A}[p^2] \cong \ker(\tilde{\Phi}),
\end{align*}
The fact that the action of $\Gal{\overline{\Q}_p}{\Q_p}$ on $\ker\left(\Phi\right)$ is totally ramified now follows from equality of characters obtained in (\ref{weil-pairing}) using the Weil pairing.
\end{proof}

As an immediate corollary to Lemma \ref{kernel-isogenies-ramification} (in particular, since the action of $\Gal{\overline{\Q}_p}{\Q_p}$ on $\ker\left(\tilde{\Phi}\right)$ is unramified), we have the following corollary:
\begin{corollary} \label{intersection-non-degenerate} $L_{\tilde{\phi}} \cap \Q_\infty = \Q$.
\end{corollary}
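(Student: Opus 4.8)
The plan is to combine the unramifiedness assertion of Lemma~\ref{kernel-isogenies-ramification} with the fact that the cyclotomic $\Z_p$-extension $\Q_\infty/\Q$ is totally ramified at $p$. First I would record the purely group-theoretic bound on $L_{\tilde{\phi}}$: by construction $\Gal{L_{\tilde{\phi}}}{\Q}\cong G_{\tilde{\phi}}$, a subgroup of $p$-power order of the cyclic group $(\Z/p^2\Z)^\times$ (into which $\Gal{\Q_{\tilde{\phi}}}{\Q}$ embeds via $\tilde{\phi}$), so $[L_{\tilde{\phi}}:\Q]$ divides $p$. Hence the subfield $L_{\tilde{\phi}}\cap\Q_\infty$ of $\Q_\infty/\Q$ also has degree dividing $p$ over $\Q$, so it is either $\Q$ or the unique first layer $\Q_1$ (the degree-$p$ subfield of $\Q(\mu_{p^2})$); and if it equals $\Q_1$, then comparing degrees forces $L_{\tilde{\phi}}=\Q_1$.

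The second step is to argue that $L_{\tilde{\phi}}/\Q$ is unramified at $p$. Fixing an embedding $\overline{\Q}\hookrightarrow\overline{\Q}_p$, Lemma~\ref{kernel-isogenies-ramification} tells us that the inertia subgroup of $\Gal{\overline{\Q}_p}{\Q_p}$ acts trivially on $\ker(\tilde{\Phi})$, i.e.\ it is contained in $\ker(\tilde{\phi})$. Since $\Q_{\tilde{\phi}}=\overline{\Q}^{\ker(\tilde{\phi})}$, this means $\Q_{\tilde{\phi}}/\Q$ is unramified at $p$, and therefore so is its subfield $L_{\tilde{\phi}}$.

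The third step records that, because $p$ is odd, $\Q_\infty$ sits inside $\Q(\mu_{p^\infty})$, which is totally ramified at $p$; in particular $\Q_1/\Q$ is a (totally) ramified extension of degree $p$. Since $L_{\tilde{\phi}}/\Q$ is unramified at $p$, we cannot have $L_{\tilde{\phi}}=\Q_1$, so the dichotomy from the first step leaves only $L_{\tilde{\phi}}\cap\Q_\infty=\Q$, which is exactly condition~\ref{Non-DG} for $\tilde{\phi}$ and proves the corollary.

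As the surrounding text indicates, this really is an immediate corollary, so I do not expect a genuine obstacle; the only points needing a little care are the bookkeeping that turns ``the local Galois action on $\ker(\tilde{\Phi})$ is unramified'' into ``the number field $L_{\tilde{\phi}}$ is unramified at $p$'', and the observation that the degree bound $[L_{\tilde{\phi}}:\Q]\mid p$ restricts $L_{\tilde{\phi}}\cap\Q_\infty$ to the two-element list $\{\Q,\Q_1\}$.
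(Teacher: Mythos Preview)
Your argument is correct and follows the same idea the paper sketches: Lemma~\ref{kernel-isogenies-ramification} forces $L_{\tilde{\phi}}/\Q$ to be unramified at $p$, while $\Q_\infty/\Q$ is totally ramified at $p$, so their intersection is $\Q$. Your first step (reducing to the dichotomy $\{\Q,\Q_1\}$) is not strictly necessary---one can argue directly that $L_{\tilde{\phi}}\cap\Q_\infty$ is simultaneously unramified and totally ramified at $p$---but it does no harm.
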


The hypotheses of Theorems \ref{oddprojectivity} and  \ref{evenprojectivity} are satisfied for the $\Lambda[G_\phi]$-module $\Sel_{\mathfrak{D}(\chi_\phi)}(L_{\phi,\infty})^\vee$ and the $\Lambda[G_{\tilde{\phi}}]$-module $\Sel_{\mathfrak{D}(\chi_{\tilde{\phi}})}(L_{\tilde{\phi},\infty})^\vee$. They let us deduce that we have the following short exact sequences of $\Lambda[G_\phi]$-modules and $\Lambda[G_{\tilde{\phi}}]$-modules respectively:
 \begin{align}\label{2-classical-ses}
&0 \rightarrow \Lambda[G_\phi]^n \xrightarrow {A_\phi} \Lambda[G_\phi]^n \rightarrow \Sel_{\mathfrak{D}(\chi_\phi)}(L_{\phi,\infty})^\vee  \rightarrow 0, \qquad && A_\phi \in M_n\left(\Lambda[G_\phi]\right).
\\  \notag & 0 \rightarrow \Lambda[G_{\tilde{\phi}}]^m \xrightarrow {A_{\tilde{\phi}}} \Lambda[G_{\tilde{\phi}}]^m \rightarrow \Sel_{\mathfrak{D}(\chi_{\tilde{\phi}})}(L_{\tilde{\phi},\infty})^\vee  \rightarrow 0, \qquad && A_{\tilde{\phi}} \in M_m\left(\Lambda[G_{\tilde{\phi}}]\right).
\end{align}

One can naturally view $\Lambda$ as a subring of $\Lambda[G_\phi]$ and $\Lambda[G_{\tilde{\phi}}]$. Since the groups $G_\phi$ and $G_{\tilde{\phi}}$ are finite, the ring extensions $\Lambda \hookrightarrow \Lambda[G_{\tilde{\phi}}]$ and $\Lambda\hookrightarrow \Lambda[G_{\tilde{\phi}}]$ are integral. One has the following~natural~maps:
\begin{align} \label{natural-compositions}
\Lambda \hookrightarrow \Lambda[G_\phi] \xrightarrow {\sigma_\phi} \frac{\Lambda}{(p^2)}, \qquad \Lambda\hookrightarrow \Lambda[G_{\tilde{\phi}}] \xrightarrow {\sigma_{\tilde{\phi}}} \frac{\Lambda}{(p^2)}.
\end{align}

The rings  $\Z_p[G_\phi]$ and $\Z_p[G_{\tilde{\phi}}]$  are local rings, each of which has a unique maximal ideal containing $p$. The ring $\frac{\Lambda}{(p^2)}$ has a unique minimal prime ideal (which corresponds to the prime ideal $(p)$ in the ring $\Lambda$ and the prime ideal lying above $(p)$ in  $\Lambda[G_\phi]$ and $\Lambda[G_{\tilde{\phi}}]$ respectively). Since the fields $L_\phi$ and $L_{\tilde{\phi}}$ are abelian over $\Q$, a theorem of Ferrero and Washington (\cite{MR528968}) asserts that the $\mu$-invariants of the $\Lambda$-modules $\Sel_{\mathfrak{D}(\chi_\phi)}(L_{\phi,\infty})^\vee$ and $\Sel_{\mathfrak{D}(\chi_{\tilde{\phi}})}(L_{\tilde{\phi},\infty})^\vee$ equal zero. As a result, $\det\left(A_\phi\right)$ and $\det\left(A_{\tilde{\phi}}\right)$ do not belong to the prime ideal lying above $(p)$ in the rings $\Lambda[G_\phi]$ and $\Lambda[G_{\tilde{\phi}}]$ respectively. Note that every zero-divisor in the ring $\frac{\Lambda}{(p^2)}$ is also nilpotent. So, the elements $\det\left(\sigma_\phi\left(A_\phi\right)\right)$ and $\det\left(\sigma_{\tilde{\phi}}\left(A_{\tilde{\phi}}\right)\right)$ in the ring $\frac{\Lambda}{(p^2)}$ are non-zero divisors. By applying Proposition 6 in Chapter III, \S 8 of \cite{bourbaki1998algebra},  one can conclude that tensoring the short exact sequences in (\ref{2-classical-ses}) with $\frac{\Lambda}{(p^2)}$, via the maps $\sigma_\phi$ and $\sigma_{\tilde{\phi}}$ respectively, will give us the following short exact sequences:
\begin{align} \label{2-classical-ses-mod-p}
& 0 \rightarrow \left(\frac{\Lambda}{(p^2)}\right)^n \xrightarrow {\sigma_{\phi}(A_\phi)} \left(\frac{\Lambda}{(p^2)}\right)^n \rightarrow \Sel_{\mathfrak{D}(\chi_\phi)}(L_{\phi,\infty})^\vee \otimes_{\Lambda[G_\phi]} \frac{\Lambda}{(p^2)}   \rightarrow 0.
\\ \notag  & 0 \rightarrow \left(\frac{\Lambda}{(p^2)}\right)^m \xrightarrow {\sigma_{\tilde{\phi}}(A_{\tilde{\phi}})} \left(\frac{\Lambda}{(p^2)}\right)^m \rightarrow \Sel_{\mathfrak{D}(\chi_{\tilde{\phi}})}(L_{\tilde{\phi},\infty})^\vee \otimes_{\Lambda[G_{\tilde{\phi}}]} \frac{\Lambda}{(p^2)}   \rightarrow 0.
\end{align}

In a rather ad-hoc manner, we define the following Selmer groups (which, as one observes, turn out to be  $\frac{\Lambda}{(p^2)}$-modules):
\begin{align*}
& \Sel_{\ker(\Phi)}(\Q_\infty) := H^1\left(\Gal{\Q_\Sigma}{\Q_\infty},\ker(\Phi)\right),\\
& \Sel_{\ker(\tilde{\Phi})}(\Q_\infty) := \ker\bigg(H^1\left(\Gal{\Q_\Sigma}{\Q_\infty}, \ker(\tilde{\Phi})\right) \rightarrow H^1(I_\eta, \ker(\tilde{\Phi}))\bigg).
\end{align*}

Here, $\eta$ denotes the unique prime in $\Q_\infty$ lying above $p$ and $I_\eta$ denotes the corresponding inertia subgroup. We now state a ``control theorem''.
\begin{proposition}
We have the following isomorphism of $\frac{\Lambda}{(p^2)}$-modules:
\begin{align} \label{evencontrol} \Sel_{\ker(\Phi)}(\Q_\infty)^\vee \cong \Sel_{\mathfrak{D}(\chi_\phi)}\left({L_{\phi,\infty}}\right)^\vee \otimes_{\Lambda[G_\phi]} \frac{\Lambda}{(p^2)}, \\ \label{oddcontrol}
\Sel_{\ker\left(\tilde{\Phi}\right)}(\Q_\infty)^\vee \cong \Sel_{\mathfrak{D}\left(\chi_{\tilde{\phi}}\right)}\left({L_{\tilde{\phi},\infty}}\right)^\vee \otimes_{\Lambda[G_{\tilde{\phi}}]} \frac{\Lambda}{(p^2)}.
\end{align}
\end{proposition}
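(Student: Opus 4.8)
The plan is to prove both isomorphisms (\ref{evencontrol}) and (\ref{oddcontrol}) by a direct comparison of the defining cohomology groups, using that the characters $\chi_\phi$ and $\chi_{\tilde\phi}$ become visible after base change and that reduction mod $p^2$ identifies the relevant Galois modules. First I would set up the even case (\ref{evencontrol}). By condition \ref{Non-DG}, the character $\chi_\phi$ can be viewed as a character of $\Gal{L_{\phi,\infty}}{\Q_\infty}$, and the group $\Delta_{\chi_\phi}$ has order prime to $p$. Taking the $\chi_\phi$-isotypic component of Shapiro's lemma (or, equivalently, using that $\Sel_{\mathfrak{D}(\chi_\phi)}(L_{\phi,\infty})$ is a direct summand cut out by an idempotent), one gets a $\Lambda[G_\phi]$-equivariant identification of $\Sel_{\mathfrak{D}(\chi_\phi)}(L_{\phi,\infty})$ with the $\chi_\phi$-part of $H^1\big(\Gal{\Q_\Sigma}{L_{\phi,\infty}}, \mathfrak{D}(\chi_\phi)\big)$ over $\Q_{\chi_\phi,\infty}$. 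Dualizing and tensoring with $\Lambda/(p^2)$ over $\Lambda[G_\phi]$ via $\sigma_\phi$ has the effect, on the Galois-cohomology side, of replacing coefficients $\mathfrak{D}(\chi_\phi) = \Q_p(\chi_\phi)/\Z_p(\chi_\phi)$ by $(\Z/p^2\Z)(\phi) = \ker(\Phi)$ — here one uses that $\phi \bmod p = \chi_\phi$ and that $\ker(\Phi)$ is a free $\Z/p^2\Z$-module on which Galois acts by $\phi$ (Lemma \ref{kernel-isogenies-ramification}) — and simultaneously inducing from $\Gal{\Q_\Sigma}{L_{\phi,\infty}}$ back down to $\Gal{\Q_\Sigma}{\Q_\infty}$. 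This should yield $\Sel_{\mathfrak{D}(\chi_\phi)}(L_{\phi,\infty})^\vee \otimes_{\Lambda[G_\phi]} \Lambda/(p^2) \cong H^1\big(\Gal{\Q_\Sigma}{\Q_\infty}, \ker(\Phi)\big)^\vee = \Sel_{\ker(\Phi)}(\Q_\infty)^\vee$, which is exactly (\ref{evencontrol}).

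For the odd case (\ref{oddcontrol}) the same strategy applies, but now one must also match up the local conditions. The Selmer group $\Sel_{\mathfrak{D}(\chi_{\tilde\phi})}(L_{\tilde\phi,\infty})$ is the kernel of the global-to-local map into $\prod_{\omega \in \Sigma_p(L_{\tilde\phi,\infty})} H^1(I_\omega, \mathfrak{D}(\chi_{\tilde\phi}))^{\Gamma_\omega}$, and $\Sel_{\ker(\tilde\Phi)}(\Q_\infty)$ is the kernel of the analogous map $H^1(\Gal{\Q_\Sigma}{\Q_\infty},\ker(\tilde\Phi)) \to H^1(I_\eta, \ker(\tilde\Phi))$. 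By Corollary \ref{intersection-non-degenerate}, $\chi_{\tilde\phi}$ is viewed as a character of $\Gal{L_{\tilde\phi,\infty}}{\Q_\infty}$; by the assumption that $\chi_{\tilde\phi}$ is unramified at $p$ (equivalently Lemma \ref{kernel-isogenies-ramification}), the inertia $I_\eta$ acts trivially on $\ker(\tilde\Phi) = \mathfrak{A}[p^2]$, and the primes of $L_{\tilde\phi,\infty}$ above $p$ are all unramified over $\Q_\infty$. The key point is that reduction mod $p^2$ followed by the same Shapiro-type induction carries the $\chi_{\tilde\phi}$-part of the source $H^1(\Gal{\Q_\Sigma}{L_{\tilde\phi,\infty}}, \mathfrak{D}(\chi_{\tilde\phi}))$ to $H^1(\Gal{\Q_\Sigma}{\Q_\infty}, \ker(\tilde\Phi))$, and the $\chi_{\tilde\phi}$-part of the local target $\prod_\omega H^1(I_\omega,\mathfrak{D}(\chi_{\tilde\phi}))^{\Gamma_\omega}$ to $H^1(I_\eta, \ker(\tilde\Phi))$, compatibly with the global-to-local maps, so kernels match. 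To make the tensor operation exact on kernels I would invoke the surjectivity of both global-to-local maps: $\phi^{\Sigma_0}_{\mathfrak{D}(\chi_{\tilde\phi}),\text{odd}}$ is surjective by Corollary 3.2.3 of \cite{greenberg2010surjectivity} (as used in Section \ref{localcohosection}), and then the reduction-mod-$p^2$ map on Selmer groups is controlled using that the relevant $\mu$-invariants vanish (Ferrero–Washington), which is already invoked before the proposition to justify the exact sequences (\ref{2-classical-ses-mod-p}); this lets one compare (\ref{2-classical-ses-mod-p}) with the presentations of the mod-$p^2$ Selmer groups.

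The main obstacle I expect is the careful bookkeeping of local terms in the odd case — specifically, checking that the $\chi_{\tilde\phi}$-isotypic piece of $\prod_{\omega \mid p} H^1(I_\omega, \mathfrak{D}(\chi_{\tilde\phi}))^{\Gamma_\omega}$ really does reduce, after $\otimes_{\Lambda[G_{\tilde\phi}]} \Lambda/(p^2)$, to the single group $H^1(I_\eta, \ker(\tilde\Phi))$ and that this identification is compatible with the global restriction maps on both sides. This requires knowing that the primes above $p$ in $L_{\tilde\phi,\infty}$ are unramified over $\Q_\infty$ (so $I_\omega = I_\eta$ under the relevant identification and the induced/coinduced structure is transparent), which is precisely what Lemma \ref{kernel-isogenies-ramification} supplies. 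A secondary technical point is ensuring that taking $\chi$-isotypic components commutes with Pontryagin duality and with $\otimes_{\Lambda[G_\bullet]}\Lambda/(p^2)$; this is harmless because $|\Delta_{\chi_\phi}|$ and $|\Delta_{\chi_{\tilde\phi}}|$ are prime to $p$, so the relevant idempotents live in $\Z_p[\Delta]$ and everything splits. Once these compatibilities are in hand, both isomorphisms follow by comparing the two presentations.
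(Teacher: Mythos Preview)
Your outline shares the essential idea with the paper's proof --- Shapiro's lemma to relate cohomology over $L_{\bullet,\infty}$ to cohomology over $\Q_\infty$, plus a comparison of local conditions at $p$ in the odd case using that $L_{\tilde\phi,\infty}/\Q_\infty$ is unramified above $p$ --- but the execution differs, and your version has one genuine gap.

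The paper works entirely on the discrete side. It notes that $\ker(\sigma_{\tilde\phi})$ is principal, generated by $\vartheta := g_{\tilde\phi} - \sigma_{\tilde\phi}(g_{\tilde\phi})$ for a generator $g_{\tilde\phi}$ of $G_{\tilde\phi}$, so the goal becomes $\Sel_{\ker(\tilde\Phi)}(\Q_\infty) \cong \Sel_{\mathfrak{D}(\chi_{\tilde\phi})}(L_{\tilde\phi,\infty})[\vartheta]$. It then writes down an explicit short exact sequence of discrete $\Gal{\Q_\Sigma}{\Q_\infty}$-modules
\[
0 \longrightarrow \ker(\tilde\Phi) \longrightarrow \Ind^{\Gal{\Q_\Sigma}{\Q_\infty}}_{\Gal{\Q_\Sigma}{L_{\tilde\phi,\infty}}}\mathfrak{D}(\chi_{\tilde\phi}) \xrightarrow{\ \vartheta\ } \Ind^{\Gal{\Q_\Sigma}{\Q_\infty}}_{\Gal{\Q_\Sigma}{L_{\tilde\phi,\infty}}}\mathfrak{D}(\chi_{\tilde\phi}) \longrightarrow 0
\]
and applies the long exact sequence for both $\Gal{\Q_\Sigma}{\Q_\infty}$ and $I_\eta$. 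This is exactly the mechanism behind your informal claim that ``tensoring with $\Lambda/(p^2)$ replaces $\mathfrak{D}(\chi_{\tilde\phi})$ by $\ker(\tilde\Phi)$'', which you assert but do not justify. The obstruction to identifying $H^1(-,\ker(\tilde\Phi))$ with $H^1(-,\Ind\,\mathfrak{D})[\vartheta]$ is the cokernel of $\vartheta$ on $H^0$: globally $H^0$ vanishes since $\chi_{\tilde\phi}$ is odd, and locally $H^0(I_\eta,\Ind\,\mathfrak{D})$ is divisible (a sum of copies of $0$ or $\Q_p/\Z_p$ via Shapiro), so $\vartheta$ acts surjectively on it. Because taking $\vartheta$-torsion is left exact, the kernels of the two global-to-local maps then match directly.

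Your alternative route via surjectivity of $\phi^{\Sigma_0}_{\mathfrak{D}(\chi_{\tilde\phi}),\mathrm{odd}}$ and Ferrero--Washington is heavier than needed: the paper uses neither for this proposition. More to the point, even if $\mu=0$ grants you $\Tor_1^{\Lambda[G_{\tilde\phi}]}\!\big(\Sel^\vee,\Lambda/(p^2)\big)=0$, you still have to identify $(H^1_{\mathrm{global}})^\vee \otimes \Lambda/(p^2)$ and $(H^1_{\mathrm{local}})^\vee \otimes \Lambda/(p^2)$ with the corresponding $\ker(\tilde\Phi)$-cohomology groups, and that step is precisely the $H^0$-divisibility argument above, which your outline does not address.
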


\begin{proof}
We will show that equation (\ref{oddcontrol}) holds. The validity of equation (\ref{evencontrol}) follows similarly. Let us choose a generator, say $g_{\tilde{\phi}}$, of the cyclic group $G_{\tilde{\phi}}$. Let $\vartheta$ equal the element $g_{\tilde{\phi}} - \sigma_{\tilde{\phi}}(g_{\tilde{\phi}})$ in the ring $\Lambda[G_{\tilde{\phi}}]$. One can easily show that $\vartheta$ generates the ideal $\ker\left(\sigma_{\tilde{\phi}}\right)$. To prove  (\ref{oddcontrol}), one needs to establish the following isomorphism:
\begin{align*}\Sel_{\ker\left(\tilde{\Phi}\right)}(\Q_\infty) \stackrel{?}{\cong} \Sel_{\mathfrak{D}\left(\chi_{\tilde{\phi}}\right)}(L_{\tilde{\phi},\infty}) [\vartheta]. \end{align*}

Consider the tautological character $\kappa: \Gal{\Q_\Sigma}{\Q_\infty} \twoheadrightarrow \Gal{L_{\tilde{\phi},\infty}}{\Q_\infty} \cong G_{\tilde{\phi}} \hookrightarrow \Gl_1\left(\Z_p[G_{\tilde{\phi}}]\right).$  We obtain the following short exact sequence of $\Z_p[G_{\tilde{\phi}} ]$-modules that is $\Gal{\Q_\Sigma}{\Q_\infty}$-equivariant:
{\small \begin{align} \label{kappaintroduction}
0 \rightarrow \Z_p[G_{\tilde{\phi}} ]\left(\kappa^{-1} \chi^{-1}_{\tilde{\phi}}\right)  \xrightarrow {\vartheta} \Z_p[G_{\tilde{\phi}} ]\left(\kappa^{-1} \chi^{-1}_{\tilde{\phi}}\right)  \rightarrow \frac{\Z}{p^2\Z}\left(\tilde{\phi}^{-1}\right) \rightarrow 0.
\end{align}}
Here, $\Z_p[G_{\tilde{\phi}} ]\left(\kappa^{-1} \chi^{-1}_{\tilde{\phi}}\right) $ denotes a free $\Z_p[G_{\tilde{\phi}} ]$-module of rank $1$ on which $\Gal{\Q_\Sigma}{\Q_\infty}$ acts via the character $\kappa^{-1} \chi^{-1}_{\tilde{\phi}}$. Note that the group $G_{\tilde{\phi}}$ is isomorphic to the quotient $\frac{\Gal{\Q_\Sigma}{\Q_\infty}}{\Gal{\Q_\Sigma}{L_{\tilde{\phi},\infty}}}$. Let us keep in mind the following isomorphisms that are $\Gal{\Q_\Sigma}{\Q_\infty}$-invariant:
\begin{align*}
\Ind^{\Gal{\Q_\Sigma}{\Q_\infty}}_{\Gal{\Q_\Sigma}{L_{\tilde{\phi},\infty}}} \bigg(\frac{\Q_p(\chi_{\tilde{\phi}})}{\Z_p(\chi_{\tilde{\phi}})}\bigg) & \cong  \frac{\Q_p[G_{\tilde{\phi}}]\left(\kappa \chi_{\tilde{\phi}}\right)}{\Z_p[G_{\tilde{\phi}}]\left(\kappa \chi_{\tilde{\phi}}\right)}  \\ & \cong \frac{\Q_p}{\Z_p} \otimes_{\Z_p}{\Z_p[G_{\tilde{\phi}}]}\left(\kappa \chi_{\tilde{\phi}}\right) \\ &\cong \Hom_{\Z_p}\left(\Z_p, \frac{\Q_p}{\Z_p}\right) \otimes_{\Z_p} \Z_p[G_{\tilde{\phi}}]\left(\kappa \chi_{\tilde{\phi}}\right)    \\ & \cong \Hom_{\Z_p[G_{\tilde{\phi}}]}\left(\Z_p[G_{\tilde{\phi}}], \frac{\Q_p[G_{\tilde{\phi}}]}{\Z_p[G_{\tilde{\phi}}]}\right)\left(\kappa \chi_{\tilde{\phi}}\right) \quad \text{(Theorem 7.11 in Matsumura's book \cite{matsumura1989commutative})} \\ & \cong \Hom_{\Z_p} \left(\Z_p[G_{\tilde{\phi}}]\left(\kappa^{-1} \chi^{-1}_{\tilde{\phi}}\right), \frac{\Q_p}{\Z_p} \right) \quad \text{(since $\Ind^{G_{\tilde{\phi}}}_{\{1\}}$ is right adjoint to $\Res^{G_{\tilde{\phi}}}_{\{1\}}$)}.
\end{align*}

Considering the Pontryagin duals of all the modules appearing in the exact sequence (\ref{kappaintroduction}), we obtain the following exact sequence of discrete modules that is  $\Gal{\Q_\Sigma}{\Q_\infty}$-equivariant:
 \begin{align} \label{short-exact-main-control}
0 \rightarrow \ker\left(\tilde{\Phi}\right) \rightarrow \Ind^{\Gal{\Q_\Sigma}{\Q_\infty}}_{\Gal{\Q_\Sigma}{L_{\tilde{\phi},\infty}}} \bigg(\frac{\Q_p(\chi_{\tilde{\phi}})}{\Z_p(\chi_{\tilde{\phi}})}\bigg) \xrightarrow {\vartheta} \Ind^{\Gal{\Q_\Sigma}{\Q_\infty}}_{\Gal{\Q_\Sigma}{L_{\tilde{\phi},\infty}}} \bigg(\frac{\Q_p(\chi_{\tilde{\phi}})}{\Z_p(\chi_{\tilde{\phi}})}\bigg) \rightarrow 0.
\end{align}

Shapiro's Lemma along with the observations that $\chi_{\tilde{\phi}}$ is odd and $L_{\tilde{\phi},\infty}$ is a totally real field,  tells us that
\begin{align}
&H^0\left(\Gal{\Q_\Sigma}{\Q_\infty},\Ind^{\Gal{\Q_\Sigma}{\Q_\infty}}_{\Gal{\Q_\Sigma}{L_{\tilde{\phi},\infty}}} \bigg(\frac{\Q_p(\chi_{\tilde{\phi}})}{\Z_p(\chi_{\tilde{\phi}})}\bigg) \right) \cong H^0\left(\Gal{\Q_\Sigma}{L_{\tilde{\phi},\infty}}, \frac{\Q_p(\chi_{\tilde{\phi}})}{\Z_p(\chi_{\tilde{\phi}})}  \right) = 0, \\
\notag & H^1\left(\Gal{\Q_\Sigma}{\Q_\infty},\Ind^{\Gal{\Q_\Sigma}{\Q_\infty}}_{\Gal{\Q_\Sigma}{L_{\tilde{\phi},\infty}}} \bigg(\frac{\Q_p(\chi_{\tilde{\phi}})}{\Z_p(\chi_{\tilde{\phi}})}\bigg) \right) \cong H^1\left(\Gal{\Q_\Sigma}{L_{\tilde{\phi},\infty}}, \frac{\Q_p(\chi_{\tilde{\phi}})}{\Z_p(\chi_{\tilde{\phi}})} \right).
\end{align}

Let $\eta$ be the unique prime in $\Q_\infty$ lying above $p$. Observe that since the extension $L_{\tilde{\phi},\infty}/\Q_\infty$ is abelian of order $p$, there either exists exactly one prime or $p$ distinct primes in the field $L_{\tilde{\phi},\infty}$ lying above $\eta$. The extension must be unramified by Lemma \ref{kernel-isogenies-ramification}. Using Shapiro's lemma, we obtain the following isomorphism:

\begin{align} \label{desc-H1-shapiro}
H^1\left(I_\eta, \ \Ind^{\Gal{\Q_\Sigma}{\Q_\infty}}_{\Gal{\Q_\Sigma}{L_{\tilde{\phi},\infty}}} \bigg(\frac{\Q_p(\chi_{\tilde{\phi}})}{\Z_p(\chi_{\tilde{\phi}})}\bigg) \right)
 \cong \left\{ \begin{array}{lc} \bigoplus \limits_{\omega_i \mid \eta} H^1\left(I_{\omega_i},\frac{\Q_p(\chi_{\tilde{\phi}})}{\Z_p(\chi_{\tilde{\phi}})}\right), & \text{if $\eta$  splits completely into $\omega_1 \cdots \omega_p$} \\  \bigoplus \limits_{\text{$p$  copies}} H^1\left(I_{\omega},\frac{\Q_p(\chi_{\tilde{\phi}})}{\Z_p(\chi_{\tilde{\phi}})}\right), & \text{if $\omega \mid \eta$ remains inert}. \end{array} \right.
\end{align}

The discussions in section \ref{localcohosection} (in particular, see Observation \ref{non-trivial-residual-obs}) also tell us that $H^0\left(I_{\omega},\frac{\Q_p(\chi_{\tilde{\phi}})}{\Z_p(\chi_{\tilde{\phi}})}\right)$ is either $0$ or isomorphic to $\frac{\Q_p}{\Z_p}$ as a group, for each $\omega \mid \eta$. In either case, it is a divisible group. As a result, using Shapiro's lemma, one can conclude that $H^0\left(I_\eta, \ \Ind^{\Gal{\Q_\Sigma}{\Q_\infty}}_{\Gal{\Q_\Sigma}{L_{\tilde{\phi},\infty}}} \bigg(\frac{\Q_p(\chi_{\tilde{\phi}})}{\Z_p(\chi_{\tilde{\phi}})}\bigg) \right)$ is also a divisible group. Since the kernel of the map
\begin{align}
H^0\left(I_\eta, \ \Ind^{\Gal{\Q_\Sigma}{\Q_\infty}}_{\Gal{\Q_\Sigma}{L_{\tilde{\phi},\infty}}} \bigg(\frac{\Q_p(\chi_{\tilde{\phi}})}{\Z_p(\chi_{\tilde{\phi}})}\bigg) \right) \xrightarrow {\vartheta} H^0\left(I_\eta, \ \Ind^{\Gal{\Q_\Sigma}{\Q_\infty}}_{\Gal{\Q_\Sigma}{L_{\tilde{\phi},\infty}}} \bigg(\frac{\Q_p(\chi_{\tilde{\phi}})}{\Z_p(\chi_{\tilde{\phi}})}\bigg) \right),
\end{align}
being a subgroup of $\ker\left(\tilde{\Phi}\right)$, is finite, it must  also be surjective. Consider the long exact sequence in group cohomology obtained from the short exact sequence (\ref{short-exact-main-control}), for both groups $\Gal{\Q_\Sigma}{\Q_\infty}$ and $I_\eta$. Combining our various observations lets us obtain the following commutative diagram:
\begin{align*}
\xymatrix{
H^1\left(\Gal{\Q_\Sigma}{\Q_\infty}, \ker\left(\tilde{\Phi}\right) \right) \ar[r]^{\cong \qquad} \ar[d]& H^1\left(\Gal{\Q_\Sigma}{L_{\tilde{\phi},\infty}}, \frac{\Q_p(\chi_{\tilde{\phi}})}{\Z_p(\chi_{\tilde{\phi}})} \right) [\vartheta] \ar[d] \\
H^1\left(I_\eta, \ker\left(\tilde{\Phi}\right) \right)  \ar[r]^{\cong \qquad} & H^1\left(I_\eta, \ \Ind^{\Gal{\Q_\Sigma}{\Q_\infty}}_{\Gal{\Q_\Sigma}{L_{\tilde{\phi},\infty}}} \bigg(\frac{\Q_p(\chi_{\tilde{\phi}})}{\Z_p(\chi_{\tilde{\phi}})}\bigg) \right) [\vartheta]
}
\end{align*}

Considering the kernels of the vertical maps and the description of $H^1\left(I_\eta, \ \Ind^{\Gal{\Q_\Sigma}{\Q_\infty}}_{\Gal{\Q_\Sigma}{L_{\tilde{\phi},\infty}}} \bigg(\frac{\Q_p(\chi_{\tilde{\phi}})}{\Z_p(\chi_{\tilde{\phi}})}\bigg) \right)$ given in (\ref{desc-H1-shapiro}), we obtain the desired isomorphism
\begin{align*}
\Sel_{\ker\left(\tilde{\Phi}\right)}(\Q_\infty) \cong \Sel_{\mathfrak{D}\left(\chi_{\tilde{\phi}}\right)}(L_{\tilde{\phi},\infty}) [\vartheta].
\end{align*}

The proposition follows.
\end{proof}

We will now recall some results of Greenberg-Vatsal \cite{greenberg2000iwasawa} concerning $\Sel_{E[p^\infty]}(\Q_\infty)$. Note that a finitely generated module over a $2$-dimensional regular local ring  has no non-trivial pseudo-null submodules if and only if its projective dimension is less than or equal to $1$. The following proposition is proved in \cite{greenberg2000iwasawa}:

\begin{proposition}[Proposition 2.5 in \cite{greenberg2000iwasawa}]
The $\Lambda$-module  $\Sel_{E[p^\infty]}(\Q_\infty)^\vee$ has no non-zero pseudo-null submodules. As a result,
\begin{align*}
\text{proj dim}_{\Lambda}\left(\Sel_{E[p^\infty]}(\Q_\infty)^\vee\right) \leq 1.
\end{align*}
\end{proposition}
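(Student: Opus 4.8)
The plan is to split the statement in two: the bound $\mathrm{proj\,dim}_\Lambda \le 1$ is the formal consequence --- recorded just above the statement, and underlain by the Auslander--Buchsbaum formula over the two-dimensional regular local ring $\Lambda$ --- of the assertion that $\Sel_{E[p^\infty]}(\Q_\infty)^\vee$ has no nonzero pseudo-null, equivalently (since $\dim \Lambda = 2$) no nonzero \emph{finite}, $\Lambda$-submodule. Thus the content is entirely the absence of nonzero finite submodules, and I would approach it by first handling the ambient global cohomology group and then cutting down to the Selmer group.

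The first step concerns $\mathcal{H} := H^1\!\left(\Gal{\Q_\Sigma}{\Q_\infty}, E[p^\infty]\right)$: I claim $\mathcal{H}^\vee$ has no nonzero finite $\Lambda$-submodule. This is the arithmetically substantive input. It rests on the Weak Leopoldt conjecture for $E$ over $\Q_\infty$, namely $H^2\!\left(\Gal{\Q_\Sigma}{\Q_\infty}, E[p^\infty]\right) = 0$ (valid here; cf.\ Theorem 10.3.25 of \cite{neukirch2008cohomology}). Granting it, $\Gal{\Q_\Sigma}{\Q_\infty}$ behaves, for the divisible module $E[p^\infty]$, like a group of cohomological dimension one, and one runs the standard Iwasawa descent: for each $n$ the Hochschild--Serre sequence and the finiteness of $E[p^\infty](\Q_\infty)$ give a surjection $H^1\!\left(\Gal{\Q_\Sigma}{\Q_n}, E[p^\infty]\right) \twoheadrightarrow \mathcal{H}^{\Gamma^{p^n}}$ with finite kernel of bounded order, and comparing this with the $\Z_p$-corank growth dictated by the global Euler--Poincar\'e characteristic forces $\mathcal{H}^\vee$ to have depth at least one. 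This is the elliptic-curve counterpart of Proposition 6.10 of \cite{MR2290593}, which plays the same role for the even character elsewhere in this paper.

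The second step propagates this to the Selmer group. By definition there is an exact sequence of discrete $\Lambda$-modules
\begin{equation*}
0 \to \Sel_{E[p^\infty]}(\Q_\infty) \to \mathcal{H} \xrightarrow{\ \lambda\ } \mathcal{L}, \qquad \mathcal{L} := H^1\!\left(I_\eta, \mathfrak{A}\right)^{G_\eta/I_\eta},
\end{equation*}
and $\lambda$ is surjective by Greenberg's surjectivity results for the global-to-local map (global Poitou--Tate duality together with the vanishing of $H^2\!\left(\Gal{\Q_\Sigma}{\Q_\infty}, E[p^\infty]\right)$ and of the relevant local $H^2$; this is where the non-primitivity of the Selmer condition, in particular the auxiliary bad prime in $\Sigma$, enters). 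A standard local computation at $p$ --- using that $\mathfrak{A}$ is either unramified over $\Q_{p,\infty}$ (good reduction) or $\Q_p/\Z_p$ with trivial action (split multiplicative reduction) --- identifies $\mathcal{L}^\vee$ with a free $\Lambda$-module of rank one (cf.\ \cite{greenberg2000iwasawa}). Dualizing yields
\begin{equation*}
0 \to \mathcal{L}^\vee \to \mathcal{H}^\vee \to \Sel_{E[p^\infty]}(\Q_\infty)^\vee \to 0,
\end{equation*}
with $\mathcal{L}^\vee$ free and $\mathcal{H}^\vee$ without nonzero finite submodule. If $Z$ is the maximal finite submodule of $\Sel_{E[p^\infty]}(\Q_\infty)^\vee$ and $B \subseteq \mathcal{H}^\vee$ its preimage, then the torsion submodule of $B$ injects into $Z$, hence is a finite submodule of $\mathcal{H}^\vee$, hence zero; so $B$ is torsion-free of rank one, contains the free rank-one module $\mathcal{L}^\vee$ with finite cokernel $Z$, and therefore embeds into its reflexive hull $B^{**}$, which (as $\Lambda$ is a two-dimensional regular local ring, hence a UFD) is free of rank one and contains $\mathcal{L}^\vee$ with finite index --- forcing $B^{**} = \mathcal{L}^\vee$, whence $B = \mathcal{L}^\vee$ and $Z = 0$.

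The main obstacle is the first step, that $\mathcal{H}^\vee$ has no nonzero finite submodule: this is genuinely arithmetic, requiring Weak Leopoldt and an honest Iwasawa-descent argument (or the import of Greenberg's structural results). Once that is in hand, the surjectivity of $\lambda$ and the freeness of $\mathcal{L}^\vee$ are routine local/duality computations, and the passage to $\Sel_{E[p^\infty]}(\Q_\infty)^\vee$ --- and from there, via the displayed general fact, to the projective-dimension bound --- is formal.
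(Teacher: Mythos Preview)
The paper does not give its own proof of this proposition; it simply quotes it as Proposition~2.5 of Greenberg--Vatsal \cite{greenberg2000iwasawa} and uses it as a black box. So there is nothing in the present paper to compare your argument against beyond the citation itself.

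That said, your sketch is correct and is essentially the route taken in the cited reference (and in Greenberg's structural papers such as \cite{MR2290593}). The decomposition into (i) Weak Leopoldt $\Rightarrow$ $\mathcal{H}^\vee$ has no nonzero finite $\Lambda$-submodule, (ii) surjectivity of the global-to-local map defining the non-primitive Selmer group, (iii) cofreeness of the local term $\mathcal{L}$ at $p$, and (iv) the reflexive-hull argument over the two-dimensional regular local ring $\Lambda$ to transfer the conclusion from $\mathcal{H}^\vee$ to $\Sel_{E[p^\infty]}(\Q_\infty)^\vee$, is exactly the standard architecture. One small point worth making explicit in step (iv): the inclusion $\mathcal{L}^\vee \subseteq B^{**}$ of free rank-one $\Lambda$-modules has finite cokernel because $B/\mathcal{L}^\vee = Z$ is finite and $B^{**}/B$ is pseudo-null (hence finite) for any torsion-free $B$; then $\Lambda/(f)$ finite forces $f$ to be a unit since $\dim\Lambda = 2$, which is the step that pins down $B^{**} = \mathcal{L}^\vee$.
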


Note that over a commutative local ring, every finitely generated projective module is free. We can consider a free resolution of $\Sel_{E[p^\infty]}(\Q_\infty)^\vee$ as a $\Lambda$-module:
\begin{align}\label{ell-free-res}
0 \rightarrow \Lambda^r \xrightarrow {A_E} \Lambda^r \rightarrow \Sel_{E[p^\infty]}(\Q_\infty) \rightarrow 0.
\end{align}

Here, $A_E$ is an $r \times r$ matrix with entries in $\Lambda$. Note that the characteristic ideal $\text{Char}_{\Lambda}\left(\Sel_{E[p^\infty]}(\Q_\infty)^\vee\right)$ equals the ideal generated by $\det(A_E)$ in $\Lambda$.  By Proposition 5.10 in Greenberg's work \cite{greenberg1999iwasawa}, the $\mu$-invariant of the $\Lambda$-module $\Sel_{E[p^\infty]}(\Q_\infty)^\vee$  is zero. So, the prime number $p$, viewed as an irreducible in the regular local ring $\Lambda$, does not divide $\det(A_E)$. We have the following natural ring homomorphism:
\begin{align*}
\sigma_{p^2}: \Lambda \rightarrow \frac{\Lambda}{(p^2)}.
\end{align*}
By applying Proposition 6 in Chapter III, \S 8 of \cite{bourbaki1998algebra} as we did earlier, one can conclude that tensoring the short exact sequence (\ref{ell-free-res}) with $\frac{\Lambda}{(p^2)}$ gives us the following short exact sequence:
{\small \begin{align} \label{reduction-char-elliptic}
0 \rightarrow \left(\frac{\Lambda}{(p^2)}\right)^r \xrightarrow {\sigma_{p^2}(A_E)} \left(\frac{\Lambda}{(p^2)}\right)^r \rightarrow \Sel_{E[p^\infty]}(\Q_\infty)^\vee \otimes_{\Lambda} \frac{\Lambda}{(p^2)} \rightarrow 0.
\end{align}}
Once again, in a rather ad-hoc manner, we define a Selmer group $\Sel_{E[p^2]}(\Q_\infty)$ for the Galois module $E[p^2]$ over $\Q_\infty$. It turns out to be a module over the ring $\frac{\Lambda}{(p^2)}$.
\begin{align*}
\Sel_{E[p^2]}(\Q_\infty) &:= \ker\bigg( H^1\left(\Gal{\Q_\Sigma}{\Q_\infty},E[p^2]\right) \rightarrow H^1(I_\eta,\mathfrak{A}[p^2])^{G_\eta/I_\eta} \bigg).
\end{align*}
Note that if $E$ has good ordinary reduction at $p$, then $\mathfrak{A}[p^2]$ is isomorphic to $\overline{E}[p^2]$, the $p^2$-torsion points on the reduced elliptic curve $\overline{E}$ . If $E$ has split multiplicative reduction at $p$, then $\mathfrak{A}[p^2]$ is isomorphic to $\frac{\Z}{p^2\Z}$ with the trivial action of $\Gal{\Q_\Sigma}{\Q}$. The following ``control theorem'' is essentially proved in the work of Greenberg and Vatsal \cite{greenberg2000iwasawa}, relating the $\frac{\Lambda}{(p^2)}$-module $\Sel_{E[p^2]}(\Q_\infty)^\vee$ with the $\frac{\Lambda}{(p^2)}$-module $\Sel_{E[p^\infty]}(\Q_\infty)^\vee \otimes_{\Lambda} \frac{\Lambda}{(p^2)}$:
\begin{proposition}[Proposition 2.8 in \cite{greenberg2000iwasawa}] We have the following isomorphism of $\frac{\Lambda}{(p^2)}$-modules:
\begin{align} \label{control-theorem-elliptic-curve}
\Sel_{E[p^2]}(\Q_\infty)^\vee \cong \Sel_{E[p^\infty]}(\Q_\infty)^\vee \otimes_{\Lambda} \frac{\Lambda}{(p^2)}.
\end{align}
\end{proposition}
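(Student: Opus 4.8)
The plan is to follow the argument of Greenberg and Vatsal for Proposition 2.8 of \cite{greenberg2000iwasawa}, adapting it to the present $p^2$-descent. Since the Pontryagin dual functor is exact and contravariant, and since $\left(\Sel_{E[p^\infty]}(\Q_\infty)^\vee \otimes_\Lambda \frac{\Lambda}{(p^2)}\right)^\vee \cong \Sel_{E[p^\infty]}(\Q_\infty)[p^2]$, the assertion is equivalent to producing an isomorphism of $\frac{\Lambda}{(p^2)}$-modules
\begin{align*}
\Sel_{E[p^2]}(\Q_\infty) \cong \Sel_{E[p^\infty]}(\Q_\infty)[p^2].
\end{align*}
First I would construct the natural candidate map $\iota_*$, induced by the $\Gal{\Q_\Sigma}{\Q}$-equivariant inclusion $E[p^2] \hookrightarrow E[p^\infty]$: after restricting cohomology to $\Gal{\Q_\Sigma}{\Q_\infty}$, and observing that the reduction map $E[p^\infty] \twoheadrightarrow \mathfrak{A}$ used in the definition of the Selmer group carries $E[p^2]$ into $\mathfrak{A}[p^2]$, one gets a commutative square comparing the two Selmer-defining local maps at the unique prime $\eta$ of $\Q_\infty$ above $p$. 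Hence $\iota_*$ sends $\Sel_{E[p^2]}(\Q_\infty)$ into $\Sel_{E[p^\infty]}(\Q_\infty)[p^2]$, it is clearly $\frac{\Lambda}{(p^2)}$-linear, and it remains to prove it is injective and surjective.

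For injectivity, I would use the long exact sequence attached to $0 \to E[p^2] \to E[p^\infty] \xrightarrow{p^2} E[p^\infty] \to 0$ over $\Gal{\Q_\Sigma}{\Q_\infty}$, which identifies the kernel of $\iota_*$ on $H^1\left(\Gal{\Q_\Sigma}{\Q_\infty}, E[p^2]\right)$ with $E(\Q_\infty)[p^\infty]/p^2 E(\Q_\infty)[p^\infty]$; thus it is enough to know $E(\Q_\infty)[p^\infty] = 0$. For this, note that $E[p]$ contains the $G_\Q$-stable line $\ker(\Phi)[p]$ with Galois action $\chi_\phi$, while by (\ref{weil-pairing}) the quotient line carries the action $\chi_{\tilde{\phi}}$; both characters have order prime to $p$, and since $\Gal{\Q_\Sigma}{\Q}/\Gal{\Q_\Sigma}{\Q_\infty} = \Gamma \cong \Z_p$ is pro-$p$, they stay non-trivial on $\Gal{\Q_\Sigma}{\Q_\infty}$ ($\chi_\phi$ being non-trivial as it is ramified at $p$, and $\chi_{\tilde{\phi}}$ being non-trivial as it is odd), so $E[p]^{\Gal{\Q_\Sigma}{\Q_\infty}} = 0$ and hence $E(\Q_\infty)[p^\infty] = 0$. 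Consequently $\iota_*$ is already injective on the ambient $H^1$.

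For surjectivity, the same long exact sequence furnishes a surjection $H^1\left(\Gal{\Q_\Sigma}{\Q_\infty}, E[p^2]\right) \twoheadrightarrow H^1\left(\Gal{\Q_\Sigma}{\Q_\infty}, E[p^\infty]\right)[p^2]$, so a class $c \in \Sel_{E[p^\infty]}(\Q_\infty)[p^2]$ lifts to some $\tilde c$; I then have to check $\tilde c \in \Sel_{E[p^2]}(\Q_\infty)$, i.e. that the image of $\tilde c$ in $H^1\left(I_\eta, \mathfrak{A}[p^2]\right)^{G_\eta/I_\eta}$ is $0$. The decisive local input is that the comparison map $h : H^1\left(I_\eta, \mathfrak{A}[p^2]\right)^{G_\eta/I_\eta} \to H^1\left(I_\eta, \mathfrak{A}\right)^{G_\eta/I_\eta}$ is injective: in the good ordinary case $\mathfrak{A} = \overline{E}[p^\infty]$ is an unramified module isomorphic to $\Q_p/\Z_p$, and in the split multiplicative case $\mathfrak{A} = \Q_p/\Z_p$ with trivial action (here $\mathfrak{A}[p^2] \cong \ker(\tilde{\Phi})$ by Lemma \ref{kernel-isogenies-ramification}), so in either case $H^0(I_\eta, \mathfrak{A}) = \mathfrak{A}$ is $p$-divisible, whence $H^0(I_\eta, \mathfrak{A})/p^2 = 0$ and $H^1(I_\eta, \mathfrak{A}[p^2]) \to H^1(I_\eta, \mathfrak{A})$ is injective. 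By commutativity of the square, $h$ carries the image of $\tilde c$ to the image of $c = \iota_*(\tilde c)$ in $H^1\left(I_\eta, \mathfrak{A}\right)^{G_\eta/I_\eta}$, which vanishes since $c$ is in the Selmer group; injectivity of $h$ then gives the claim, so $\iota_*$ is an isomorphism, and dualizing yields the proposition. The step I expect to require the most care is the local bookkeeping at $\eta$: verifying that the reduction maps $E[p^n] \to \mathfrak{A}[p^n]$ genuinely identify the two Selmer conditions and that $h$ is injective in each reduction type. The rest is a formal diagram chase in the spirit of \cite{greenberg2000iwasawa}.
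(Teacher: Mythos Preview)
Your argument is correct and is precisely the standard Greenberg--Vatsal descent that the paper invokes; the paper itself offers no independent proof, simply citing Proposition~2.8 of \cite{greenberg2000iwasawa}. Your care with the two points that actually need checking---the vanishing of $E(\Q_\infty)[p^\infty]$ (using that $\chi_\phi$ and $\chi_{\tilde{\phi}}$ are non-trivial characters of order prime to $p$, hence remain non-trivial on $\Gal{\Q_\Sigma}{\Q_\infty}$) and the injectivity of the local comparison $h$ (using the $p$-divisibility of $H^0(I_\eta,\mathfrak{A})$)---is exactly what is required to adapt the cited result to the present setting.
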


\subsection{Proof of Theorem \ref{elliptic-curves}} \mbox{}

We would like to prove the following theorem stated in the introduction:
\ellipticcurves*

\begin{proof}
Recall, from the introduction, that we have the following short exact sequence of modules over $\frac{\Z}{p^2\Z}$ that is $\Gal{\Q_\Sigma}{\Q_\infty}$-equivariant:
{\small \begin{align*}
0 \rightarrow \ker(\Phi) \rightarrow E[p^2] \rightarrow \ker\left(\tilde{\Phi}\right)\rightarrow 0.
\end{align*}}
Consider the long exact sequence in group cohomology, for both $\Gal{\Q_\Sigma}{\Q_\infty}$ and $I_\eta$ (here once again, $\eta$ is the unique prime above $p$ in $\Q_\infty$). We have the following commutative diagram whose rows are exact:
{\footnotesize \begin{align*}
\xymatrix{
0 \ar[r]& H^1 \left(\Gal{\Q_\Sigma}{\Q_\infty}, \ker(\Phi) \right) \ar[d]\ar[r]& H^1 \left(\Gal{\Q_\Sigma}{\Q_\infty}, E[p^2] \right)\ar[d] \ar[r]&H^1 \left(\Gal{\Q_\Sigma}{\Q_\infty}, \ker\left(\tilde{\Phi}\right) \right) \ar[d]\ar[r]& 0. \\
0\ar[r]& 0\ar[r]& H^1\left(I_\eta, \mathfrak{A}[p^2]\right) \ar[r]^{\cong}& H^1\left(I_\eta, \ker\left(\tilde{\Phi}\right)\right) \ar[r]& 0
}
\end{align*}}
To see why the top row is exact, it suffices to show that (i) $H^0 \left(\Gal{\Q_\Sigma}{\Q_\infty},\ker\left(\tilde{\Phi}\right)\right)=0$ and (ii) $H^2 \left(\Gal{\Q_\Sigma}{\Q_\infty},\ker(\Phi) \right)=0$. The first assertion follows from our assumption that the character $\tilde{\phi}$ is odd.  For the second assertion, notice that one obtains an exact sequence,
{\small$$
H^2 \left(\Gal{\Q_\Sigma}{\Q_\infty},\ker(\Phi)[p] \right)  \rightarrow  H^2 \left(\Gal{\Q_\Sigma}{\Q_\infty},\ker(\Phi) \right) \rightarrow H^2 \left(\Gal{\Q_\Sigma}{\Q_\infty},\ker(\Phi)[p] \right),$$} as part of the long exact sequence in group cohomology (for the group $\Gal{\Q_\Sigma}{\Q_\infty}$) using the short exact sequence $0 \rightarrow \ker(\Phi)[p] \rightarrow \ker(\Phi) \rightarrow \ker(\Phi)[p] \rightarrow 0$; the existence of this short exact sequence uses the fact that $\ker(\Phi)$ is cyclic. The arguments given on Page 30 in the work of Greenberg and Vatsal \cite{greenberg2000iwasawa} establish that $H^2 \left(\Gal{\Q_\Sigma}{\Q_\infty},\ker(\Phi)[p] \right)$ equals zero. This forces $H^2 \left(\Gal{\Q_\Sigma}{\Q_\infty},\ker(\Phi) \right)$ to equal zero too. \\

As for the bottom row in the commutative diagram above, it turns out that, by Lemma \ref{kernel-isogenies-ramification}, $\mathfrak{A}[p^2] \cong \ker\left(\tilde{\Phi}\right)$, as modules for the group ring $\frac{\Z}{p^2\Z}[\Gal{\overline{\Q}_p}{\Q_p}]$ (and hence for the action of inertia subgroup $I_\eta$ of $\Gal{\overline{\Q}_p}{\Q_{p,\infty}}$ too). \\

Considering the Snake Lemma for the commutative diagram given earlier, we obtain the short exact sequence $0 \rightarrow \Sel_{\ker(\Phi)}(\Q\infty) \rightarrow \Sel_{E[p^2]}(\Q_\infty) \rightarrow \Sel_{\ker(\tilde{\Phi})} \rightarrow 0$. Taking Pontryagin duals, we obtain the following short exact sequence of $\frac{\Lambda}{(p^2)}$-modules:
\begin{align} \label{ses-elliptic}
0 \rightarrow \Sel_{\ker(\tilde{\Phi})}(Q_\infty)^\vee \rightarrow \Sel_{E[p^2]}(\Q_\infty)^\vee \rightarrow  \Sel_{\ker\left(\ker(\Phi)\right)}(\Q_\infty)^\vee \rightarrow 0.
\end{align}
By (\ref{2-classical-ses-mod-p}) and (\ref{reduction-char-elliptic}), all the $\frac{\Lambda}{(p^2)}$-modules, appearing in (\ref{ses-elliptic}), have projective dimensions less than or equal to one.  Using Lemma 3 in \cite{MR1658000}, we have the following equality of (first) Fitting ideals in $\frac{\Lambda}{(p^2)}$:

\begin{align} \label{equality-of-divisors}
\text{Fitt} \left(\Sel_{E[p^2]}(\Q\infty)^\vee\right) = \text{Fitt}\left(\Sel_{\ker(\Phi)}(\Q_\infty)^\vee\right)  \text{Fitt}\left(\Sel_{\ker(\tilde{\Phi})}(\Q_\infty)^\vee\right).
\end{align}

Using (\ref{2-classical-ses-mod-p}) and (\ref{reduction-char-elliptic}), we have the following equality of (first) Fitting ideals in $\frac{\Lambda}{(p^2)}$:

\begin{align} \label{recall-mod-p2-divisors}
&\text{Fitt} \left( \Sel_{E[p^2]}(\Q_\infty)^\vee\right) = \sigma_{p^2}\bigg(\text{Char}_{\Lambda}\left(\Sel_{E[p^\infty]}(\Q_\infty)^\vee\right)\bigg), \\
\notag &\text{Fitt}\left(\Sel_{\ker(\Phi)}(\Q_\infty)^\vee\right) = \sigma_{\phi}\bigg(\left(\det\left(A_\phi\right)\right)\bigg) , \qquad   \text{Fitt}\left(\Sel_{\ker(\tilde{\Phi})}(\Q_\infty)^\vee\right) = \sigma_{\tilde{\phi}} \bigg(\left(\det\left(A_{\tilde{\phi}}\right)\right)\bigg).
\end{align}

Combining (\ref{equality-of-divisors}) and (\ref{recall-mod-p2-divisors}), we obtain the following equality of ideals in $\frac{\Lambda}{(p^2)}$:
 \begin{align} \label{main-equality-theorem-elliptic}
\sigma_{p^2}\bigg(\text{Char}_{\Lambda}\left(\Sel_{E[p^\infty]}(\Q_\infty)^\vee\right)\bigg) = \sigma_{\phi}\bigg(\left(\det\left(A_\phi\right)\right)\bigg)  \sigma_{\tilde{\phi}} \bigg(\left(\det\left(A_{\tilde{\phi}}\right)\right)\bigg).
\end{align}
This completes the proof of Theorem \ref{elliptic-curves}.

\end{proof}

\section*{Acknowledgements}
Both authors thank Ralph Greenberg for his guidance in graduate school, and for his steadfast encouragement to write this paper. The second author thanks Ted Chinburg for indicating to him the works of Reiner and Ullom. We thank Andreas Nickel for many helpful comments. We are also grateful to the anonymous referees for their feedback and providing many thoughtful suggestions.

\bibliographystyle{abbrv}
\bibliography{biblio}
\end{document}